\documentclass[11pt]{article}
\usepackage{amssymb, authblk}
\usepackage{amsmath,bbm}
\usepackage{fullpage}
\usepackage{amsthm, hyperref}
\usepackage{graphicx, mathabx}
\usepackage{verbatim}
\usepackage{algorithm}
\usepackage{algpseudocode}
\usepackage[dvipsnames]{xcolor}
\usepackage{tikz}
\usetikzlibrary{arrows.meta}
\usetikzlibrary{decorations.pathreplacing}

\usepackage{natbib}

\usepackage{mathtools}

\DeclarePairedDelimiter\floor{\lfloor}{\rfloor}

\algnewcommand\algorithmicinput{\textbf{INPUT:}}
\algnewcommand\INPUT{\item[\algorithmicinput]}
\algnewcommand\algorithmicoutput{\textbf{OUTPUT:}}
\algnewcommand\OUTPUT{\item[\algorithmicoutput]}

\usepackage{hyperref}[]
\hypersetup{
    colorlinks=true,
    linkcolor=blue,
    filecolor=magenta,      
    urlcolor=cyan,
      citecolor=blue,
    }

\usepackage{cleveref}

\newtheorem{theorem}{Theorem}
\newtheorem{lemma}[theorem]{Lemma}
\newtheorem{proposition}[theorem]{Proposition}
\newtheorem{corollary}[theorem]{Corollary}

\newtheorem{definition}{Definition}
\newtheorem{remark}{Remark}
\newtheorem{assumption}{Assumption}

\allowdisplaybreaks

\DeclareMathOperator*{\argmin}{arg\,min}

\DeclareMathOperator*{\esssup}{ess\,sup}

\title{Optimal network online change point localisation}
\author[1]{Yi Yu}
\author[2]{Oscar Hernan Madrid Padilla}
\author[3]{Daren Wang}
\author[4]{Alessandro Rinaldo}
\affil[1]{Department of Statistics, University of Warwick}
\affil[2]{Department of Statistics, University California, Los Angeles}
\affil[3]{Department of Statistics, University of Notre Dame}
\affil[4]{Department of Statistics \& Data Science, Carnegie Mellon University}
\date{\today}

\begin{document}

\maketitle
\begin{abstract}
	  	
We study the problem of online network change point detection.  In this setting, a collection of independent Bernoulli networks is collected sequentially, and the underlying distributions change when a change point occurs.  The goal is to detect the change point as quickly as possible, if it exists, subject to a constraint on the number or probability of false alarms.  In this paper, on the detection delay, we establish a minimax lower bound and two upper bounds based on NP-hard algorithms and polynomial-time algorithms, i.e.
	\[
		\mbox{detection delay}  \begin{cases}
  			\gtrsim \log(1/\alpha) \frac{\max\{r^2/n, \, 1\}}{\kappa_0^2 n \rho}, \\
  			\lesssim  \log(\Delta/\alpha) \frac{\max\{r^2/n, \, \log(r)\}}{\kappa_0^2 n \rho}, & \mbox{with NP-hard algorithms}, \\
  			\lesssim  \log(\Delta/\alpha) \frac{r}{\kappa_0^2 n \rho}, & \mbox{with polynomial-time algorithms},
  		\end{cases}
	\]
	where $\kappa_0, n, \rho, r$ and $\alpha$ are the normalised jump size, network size, entrywise sparsity, rank sparsity and the overall Type-I error upper bound.  All the model parameters are allowed to vary as $\Delta$,  the location of the change point, diverges. 

The polynomial-time algorithms are novel procedures that we propose in this paper, designed for quick detection under two different forms of Type-I error control.  The first is based on controlling the overall probability of a false alarm  when there are no change points, and the second is based on specifying a lower bound on the expected time of the first false alarm.  Extensive experiments show that, under different scenarios and the aforementioned forms of Type-I error control, our proposed approaches outperform  state-of-the-art methods.  

\vskip 5mm
\textbf{Keywords}: 	Dynamic networks, online change point detection, minimax optimality.

\end{abstract}

\section{Introduction}\label{sec-introduction}

In this paper we are concerned with online change point detection in dynamic networks.  To be specific, we observe a sequence of independent adjacency matrices $\{A(t), \, t = 1, 2, \ldots\}$, with $\mathbb{E}\{A(t)\} = \Theta(t)$, for $t \in \mathbb{N}_+$.  If there exists $t^* \geq 2$, such that $\Theta(t^*) \neq \Theta(t^*-1)$, then we call $t^*$ a change point.  Our aim is to detect the existence of such change points as soon as they occur.  On the other hand, if there is no change point, then we would like to avoid false alarms.  To the best of our knowledge, this problem has not been theoretically studied in the existing statistical literature. 

The problem we described above is an abstractification of various real-life problems.  For instance, in cybersecurity, one monitors the internet or a system and wishes to detect malicious activity as early as it starts.  In finance, regulatory authorities oversee the markets and aim to stop unlawful activities at an early stage.  In epidemiology, public health sectors follow the spreading of a contagious disease in a community and target at knowing the spreading pattern changes as they happen.  

As a concrete example, we consider the Massachusetts Institute of Technology (MIT) cellphone data set \citep{eagle2006reality}.  The data set consists of human interactions  measured by the cellphone activity of the participants.  There were 96 participants that included students and faculty members at the MIT.  The data were taken from 14-Sept-2004 to 5-May-2005.  

We construct two experiments to evaluate our proposed methods and our competitors.  In the first example, we use the data from 14-Sept-2004 to 15-Feb-2005, which cover the MIT winter recess starting on 22-Dec-2004 and ending on 3-Jan-2005.  In our second example, we use the data from 1-Jan-2005 to 5-May-2005, which cover the spring recess starting on 26-Mar-2005 and ending on 3-Apr-2005.  In \Cref{fig-sec-intro}, we plot the interaction networks for a few representative dates.  A white dot means the corresponding row and column individuals interacted on the specific date, while a red dot means the lack of interaction.  For these two examples, our proposed method detects change points at 27-Dec-2004 and 31-Mar-2005, respectively.  Our competitors' change point estimators are around 30-Jan-2005 and 6-Apr-2005, respectively.  Our method is clearly the best at detecting the winter and spring recess periods.   Numerical details are explained in \Cref{sec:cellphone_data}.  

\begin{figure}[t!]
	\begin{center}
		\includegraphics[width=0.24\textwidth]{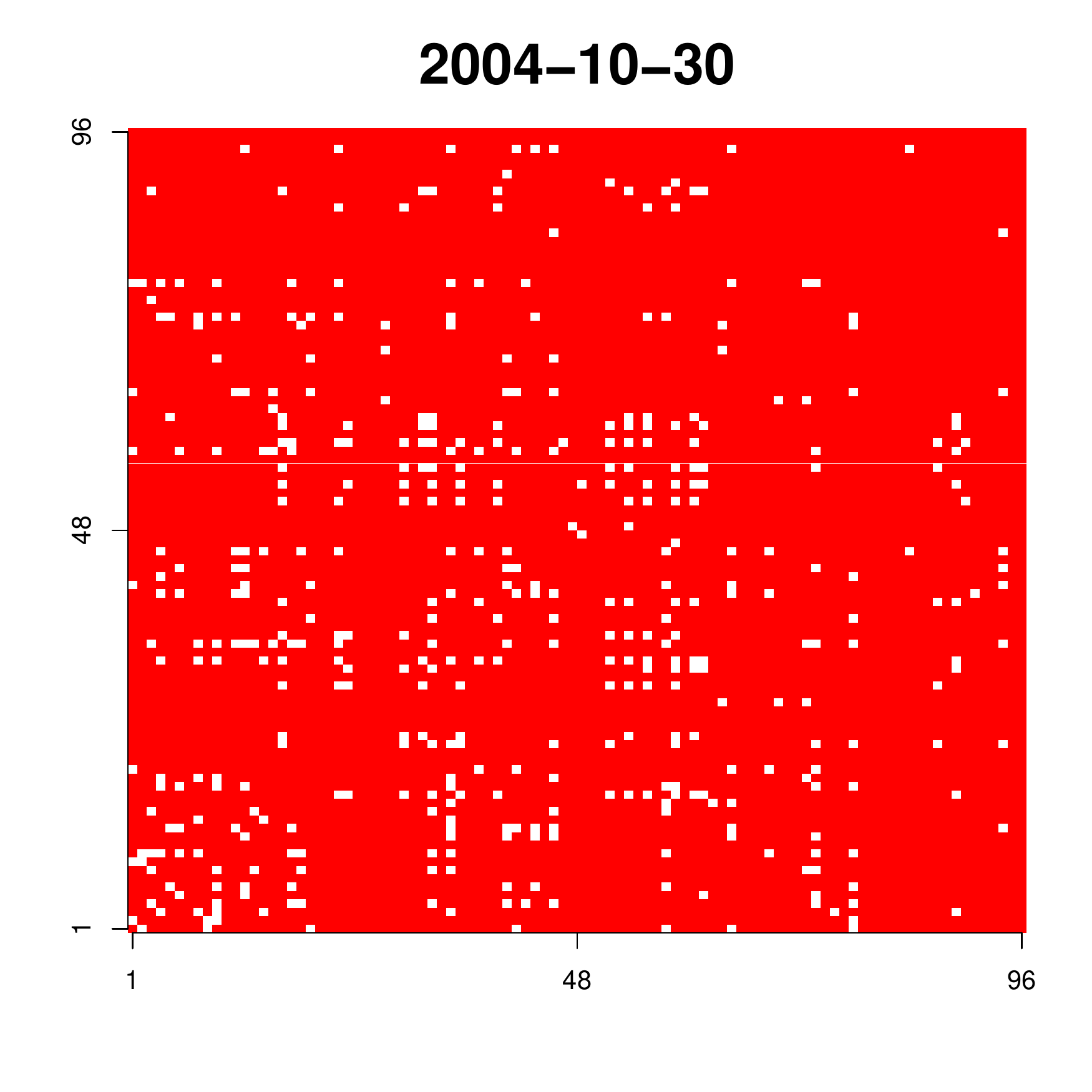} 
		\includegraphics[width=0.24\textwidth]{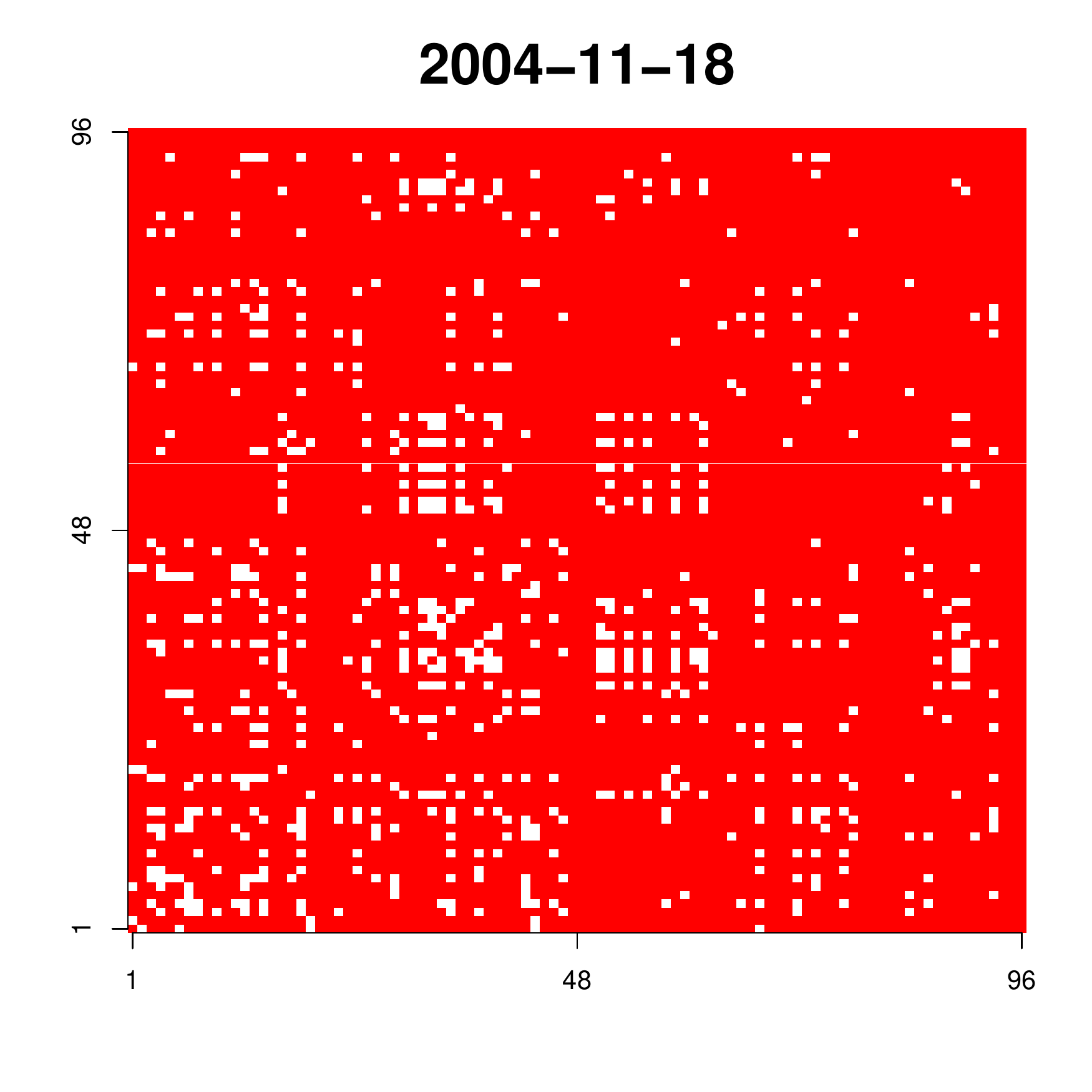} 	
		\includegraphics[width=0.24\textwidth]{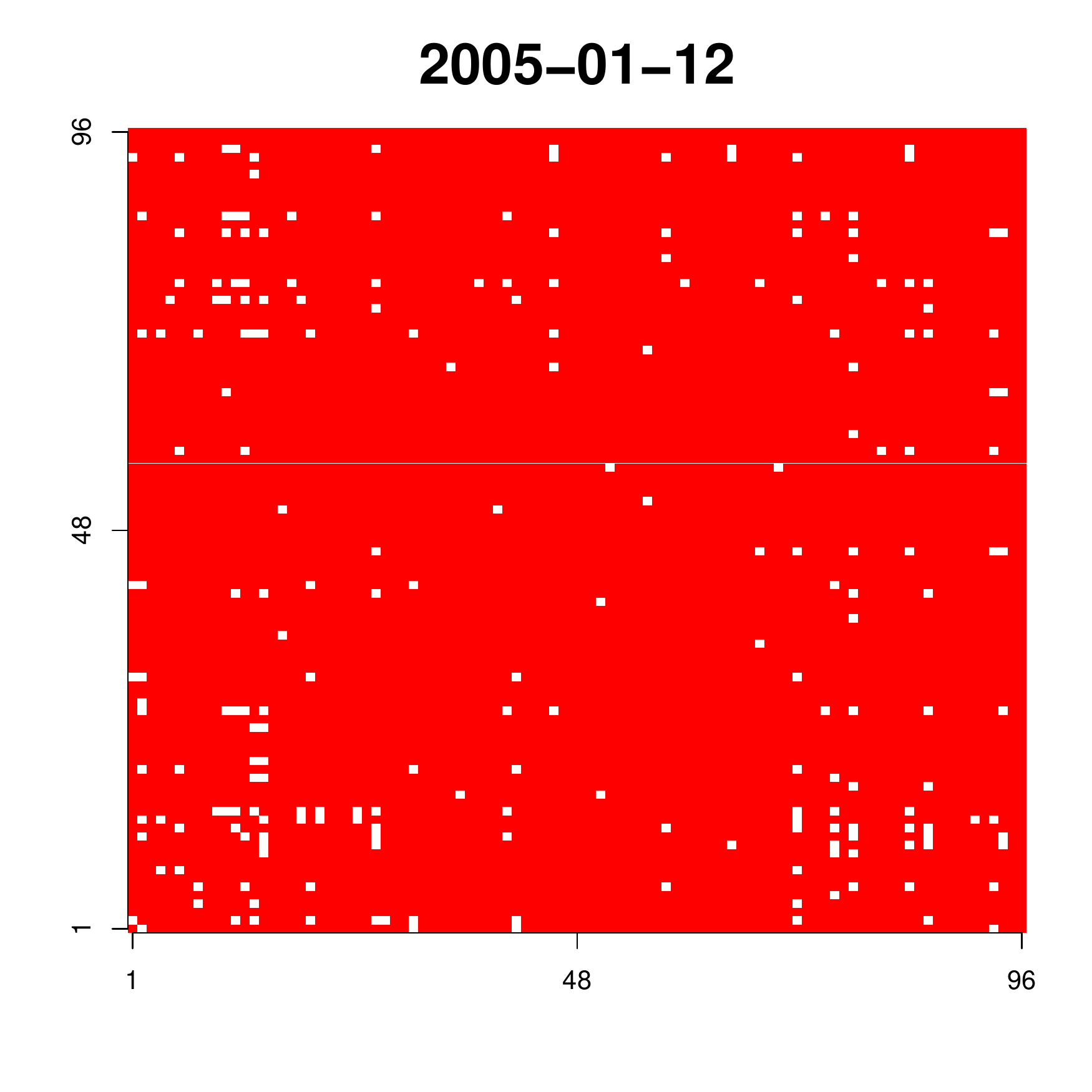} 
		\includegraphics[width=0.24\textwidth]{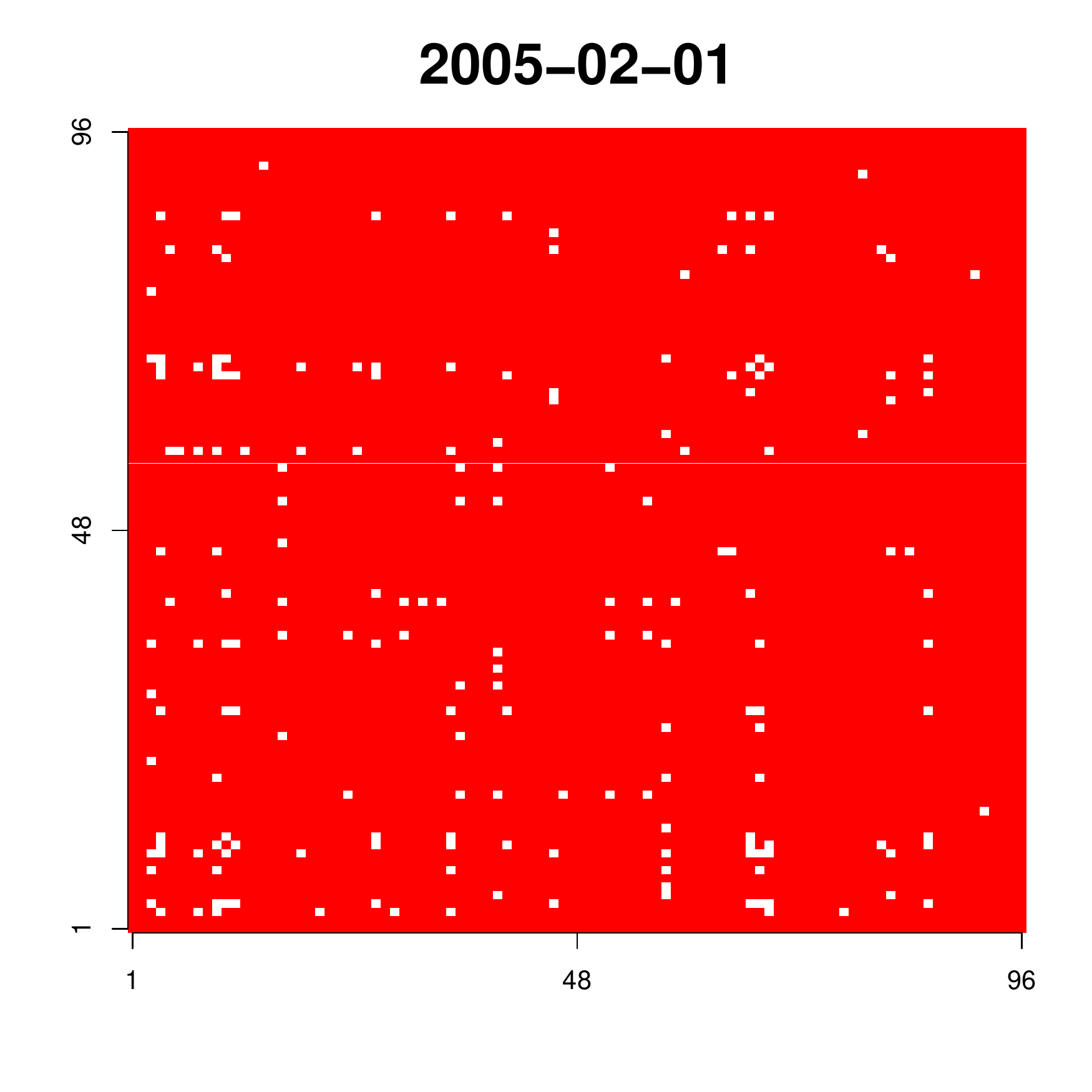} 		
		\includegraphics[width=0.24\textwidth]{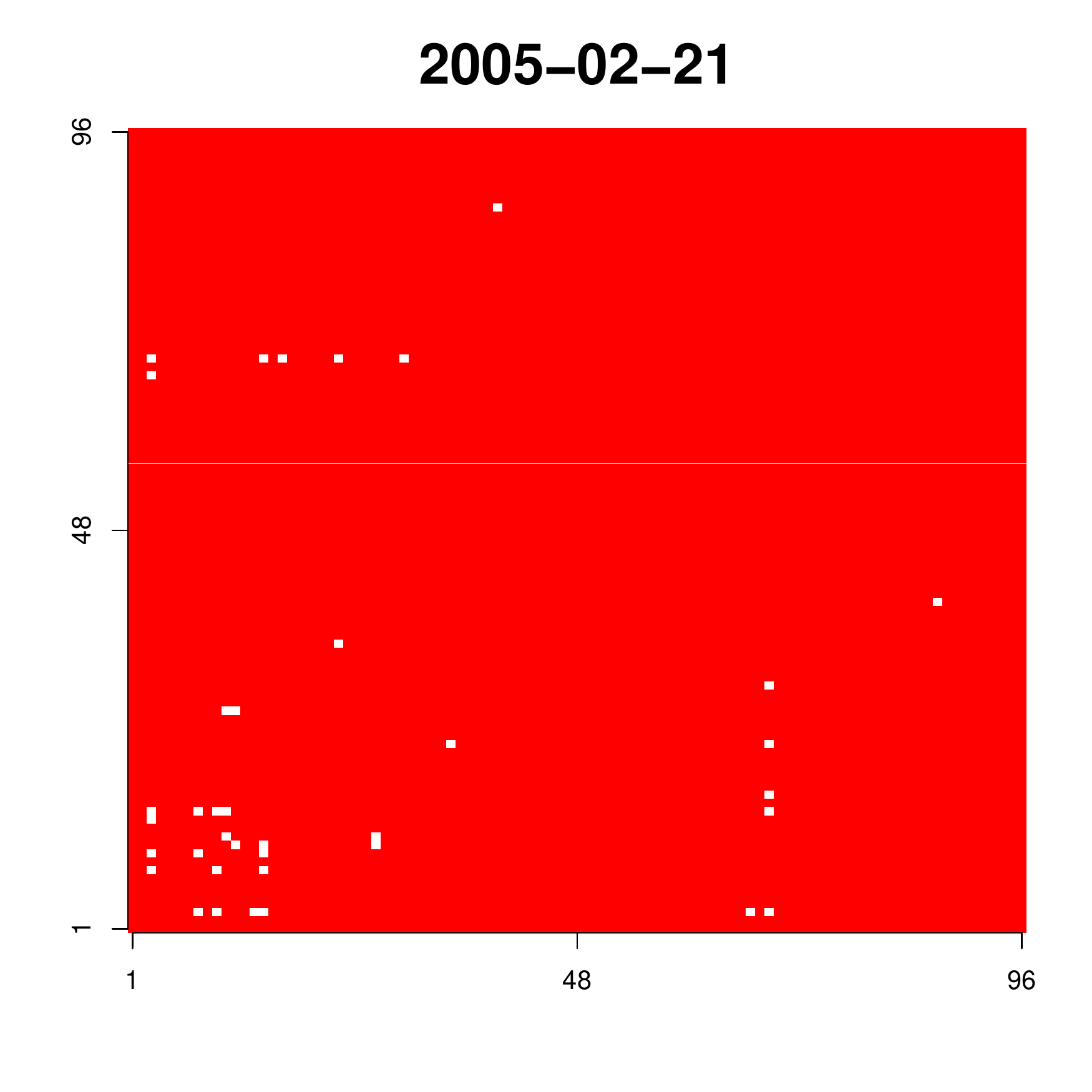} 
		\includegraphics[width=0.24\textwidth]{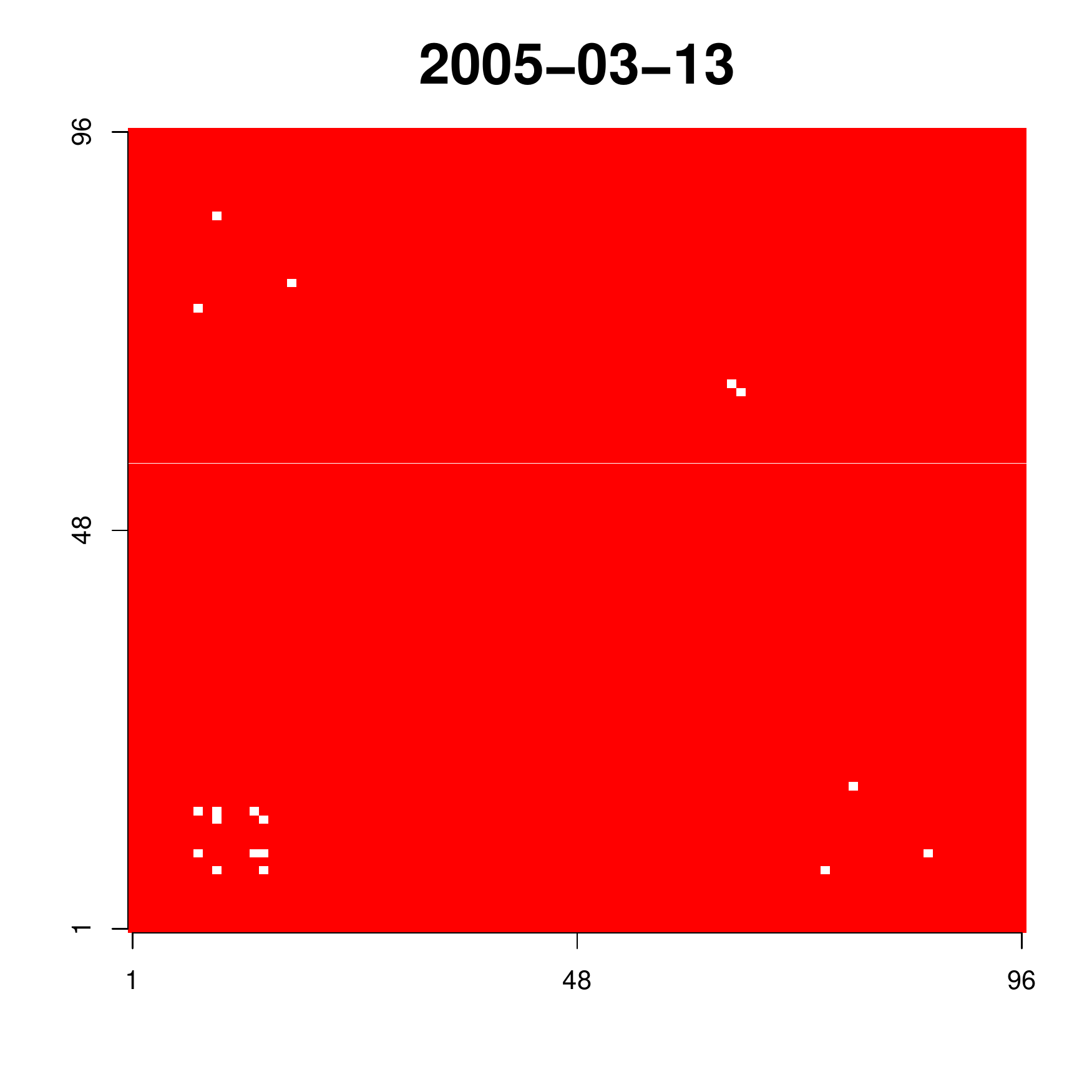} 	
		\includegraphics[width=0.24\textwidth]{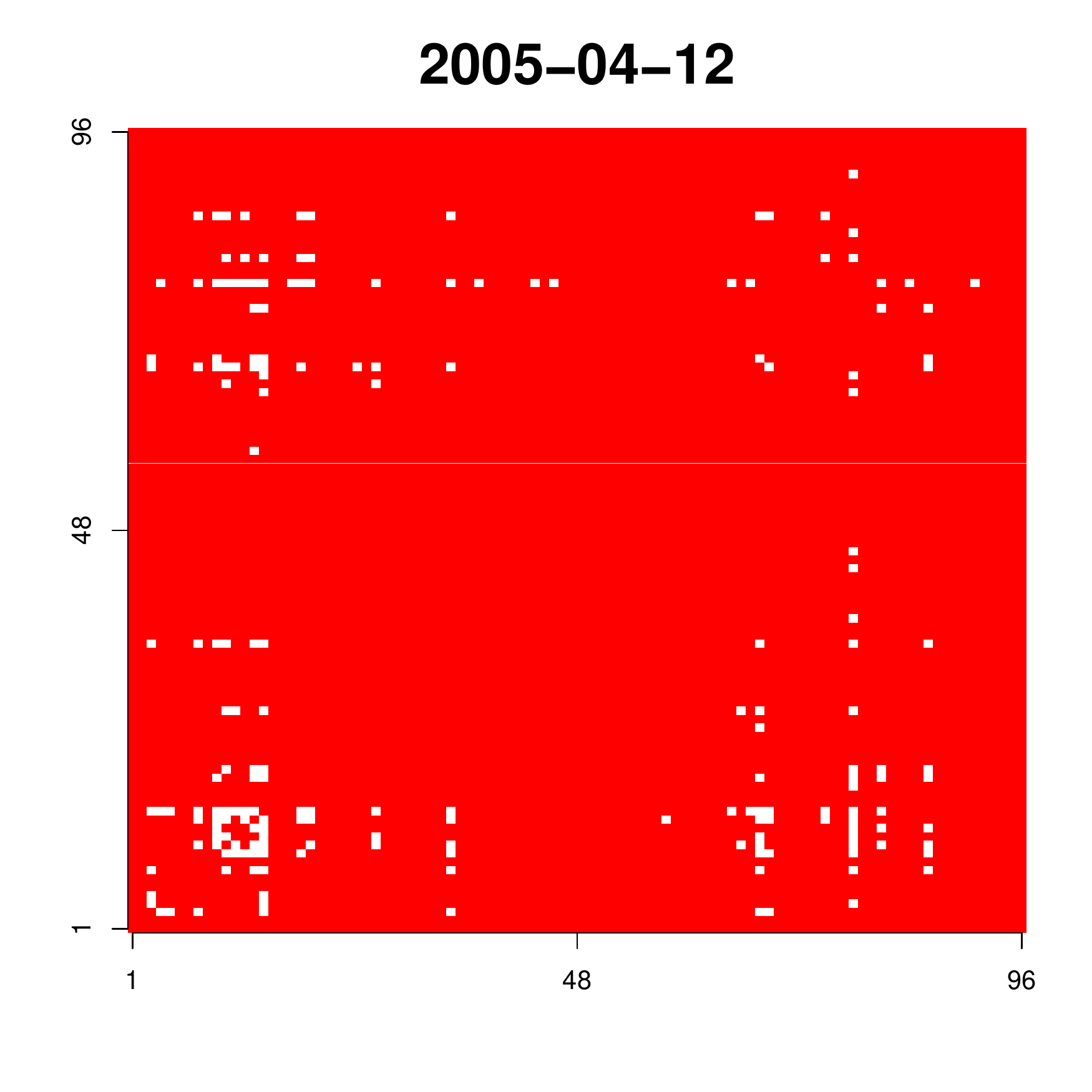} 
		\includegraphics[width=0.24\textwidth]{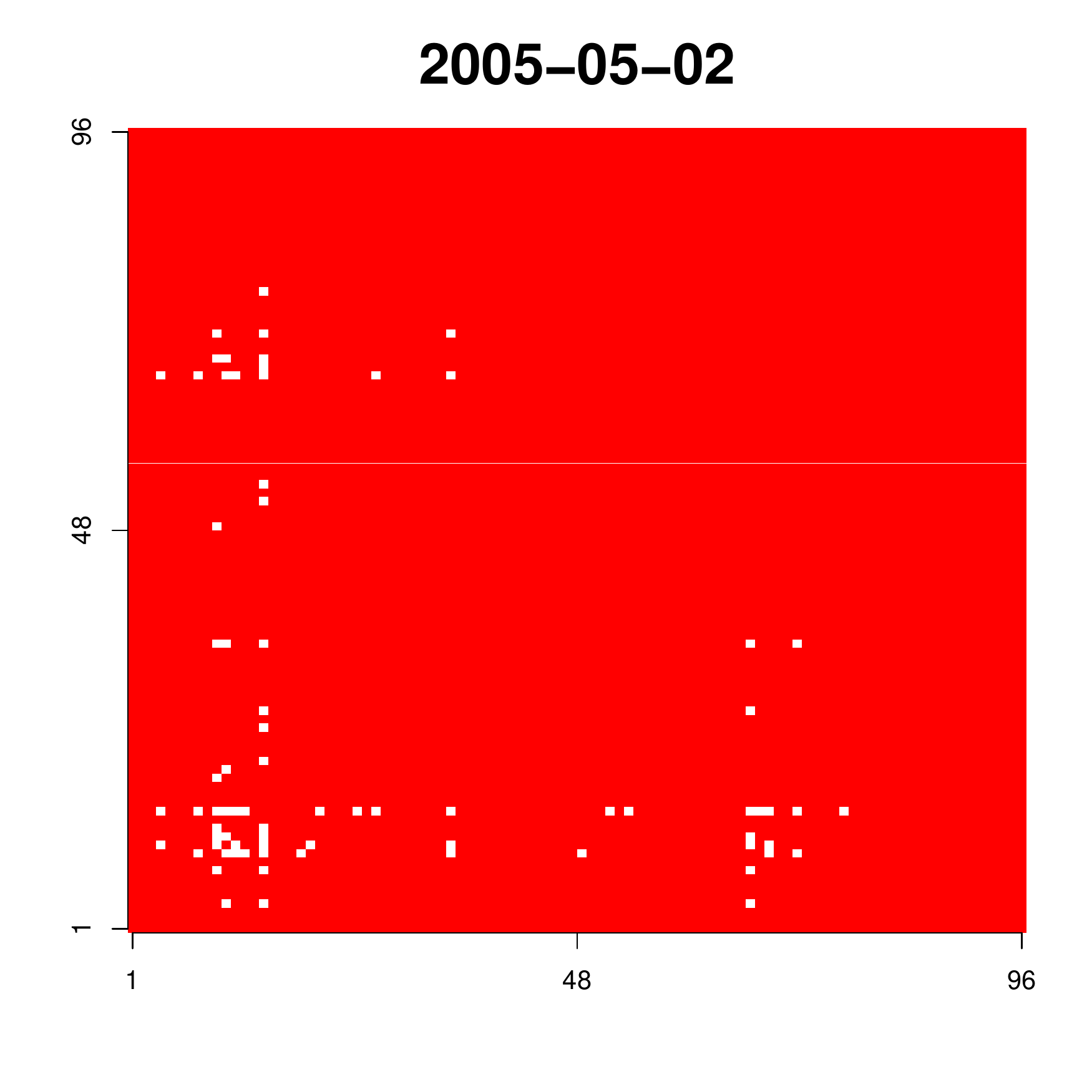} 
		\caption{\label{fig-sec-intro}  Interaction networks based on the MIT cellphone data sets.  A white dot means the corresponding row and column individuals interacted on the specific date.  A red dot means the lack of interaction.  Details are explained in \Cref{sec:cellphone_data}.}
	\end{center}
\end{figure}

Due to the aforementioned real-world applications, change point detection problems have been intensively studied in the literature, not necessarily in the dynamic networks context though.  In terms of online change point detection, i.e.,  making sequential decisions about the existence of change points while collecting data, \cite{lorden1971procedures}, \cite{moustakides1986optimal}, \cite{ritov1990decision}, \cite{lai1981asymptotic}, \cite{lai1998information}, \cite{lai2001sequential}, \cite{chu1996monitoring}, \cite{aue2004delay}, \cite{kirch2008bootstrapping}, \cite{madrid2019sequential} and \cite{yu2020note} studied univariate sequences; \cite{he2018sequential} focused on a sequence of random graphs; \cite{chen2019sequential} and \cite{dette2019likelihood} allowed for more general scenarios, including nonparametric models; \cite{chen2020high} and \cite{keshavarz2018sequential} studied high-dimensional Gaussian vectors.  In terms of offline change point detection, i.e., after collecting a sequence of data, one seeks change points retrospectively, a wide range of models have been studied.  The closely related one is \cite{wang2018optimal}, where a sequence of adjacency matrices were considered.  More discussions with existing literature will be provided as we unfold our results.

\subsection{List of contributions}

The contributions of this paper are summarised below.  
\begin{itemize}
\item To the best of our knowledge, it is the first time that online change point detection is formally analysed in a sequence of adjacency matrices, allowing all model parameters to vary as functions of $\Delta$.

\item We establish minimax lower bounds on the detection delay.  To the best of our knowledge, in statistical networks literature, a lower bound involving the rank parameter has only been established in estimation problems \citep[e.g.][]{gao2015rate}, but not in the context of testing, not to mention change point detection. Our  lower bound matches, up to a logarithmic factor, an upper bound derived based on an NP-hard algorithm.

\item In addition, we propose a computationally-efficient network online change point detection method, which comes with two variants corresponding to two different Type-I error controlling strategies.  Extensive numerical results are provided to evaluate the performance of our proposed methods against state-of-the-art competitors.  We also  discuss tuning parameter selection 
 aspects of our approaches.
\end{itemize}

Throughout this paper, we will adopt the following notation.  For any matrix $M \in \mathbb{R}^{m_1 \times m_2}$, let $\|M\|_{\mathrm{F}}$ and $\|M\|_{\infty} = \max_{i = 1}^{m_1}\max_{j = 1}^{m_2}|M_{ij}|$ be the Frobenius norm and the entry-wise supremum norm of $M$, respectively.  For any two matrices $A, B \in \mathbb{R}^{m_1 \times m_2}$, let $(A, B) = \mathrm{tr}(A^{\top}B)$ be the Frobenius inner product of two matrices.

\section{Methods}\label{sec-methods}

Since our data are a sequence of adjacency matrices, our first task is to formally define the networks at every time point.  

\begin{definition}[Inhomogeneous Bernoulli networks]\label{def-inhomo-ber-net}
A network with node set $\{1, \ldots, n\}$  is an inhomogeneous Bernoulli network if its adjacency matrix $A \in \mathbb{R}^{n\times n}$ satisfies
		\[
		A_{ij} = A_{ji} = \begin{cases}
			1, & \mbox{nodes $i$ and $j$ are connected by an edge},\\
			0, & \mbox{otherwise};
		\end{cases}
		\]
		and $\{A_{ij}, i < j\}$ are independent Bernoulli random variables with $\mathbb{E}(A_{ij}) = \Theta_{ij}$.  We refer to the matrix $\Theta$ as the graphon matrix. 
\end{definition}

This general definition includes popular network models as special cases, but it is not restricted to any specific model.  Note that we allow every random variable that corresponds to an integer pair $(i, j)$, $1 \leq i < j \leq n$, to have its own mean, i.e.~$\mathbb{E}(A_{ij}) = \Theta_{ij}$.  

\begin{remark}
Despite the flexibility it enjoys, \Cref{def-inhomo-ber-net} is also subjected to a number of restrictions.  First, each random variable is assumed to be Bernoulli, which is a sub-Gaussian random variable.  However, our framework can be extended to handle Poisson random variables.  The algorithms we propose in the sequel follow naturally, and the theoretical results can be adjusted by using sub-Exponential concentration inequalities.  Second, the adjacency matrices are assumed to be symmetric in \Cref{def-inhomo-ber-net}.  All the results in this paper can be extended straightforwardly to asymmetric cases, corresponding to directed networks.  Finally, the model does not allow for dependence between entries of the adjacency matrix.
\end{remark}

In order to detect the change points of $\{\Theta(t), \, t = 1, 2, \ldots\}$, we adopt the network CUSUM statistic, which originated in the univariate CUSUM statistics from \cite{Page1954} and was first formally stated in \cite{wang2018optimal}.

\begin{definition}\label{def-cusum-net}
Given a sequence of matrices $\{A(t)\}_{t = 1, 2, \ldots} \subset \mathbb{R}^{n \times n}$, we define the corresponding online CUSUM statistics as
	\[
		\widehat{A}_{s, t} = \sqrt{\frac{t-s}{st}} \sum_{l = 1}^s A(l) - \sqrt{\frac{s}{(t-s)t}}\sum_{l = s+1}^t A(l),
	\]
	for all integer pairs $(s, t)$, $t \geq 2$ and $s \in [1, t)$.  
\end{definition}

With the notation introduced in \Cref{def-cusum-net}, we have for any integer pair $(s, t)$, $1 \leq s < t$,
	\[
		\mathbb{E}(\widehat{A}_{s, t}) = \widehat{\Theta}_{s, t} = \sqrt{\frac{t-s}{st}} \sum_{l = 1}^s \Theta(l) - \sqrt{\frac{s}{(t-s)t}}\sum_{l = s+1}^t \Theta(l).
	\]

\Cref{alg-online-net} is our main procedure, with a subroutine detailed in \Cref{alg-usvt} and a variant in \Cref{alg-online-net-variant}.  Both Algorithms~\ref{alg-online-net} and \ref{alg-online-net-variant} are written in a way that they will not stop if no change point is detected.  In practice, they can be terminated either by users or if there are no more new data.

To motivate our algorithms, we first investigate the statistics in  \Cref{def-cusum-net}.  These are linear combinations of all adjacency matrices up to time point $t \geq 2$.  To be specific,  for $1 \leq s < t$, the corresponding statistic
is a difference between the sample means before and after time point $s$.  In the univariate online change point detection problem \citep[e.g.][]{yu2020note}, one scans through all possible integer pairs $(s, t)$, $1 \leq s < t$.  In \Cref{alg-online-net}, we propose a more efficient algorithm to avoid scanning through all $s \in [1, t)$.

For every time point $t \geq 2$, in \Cref{alg-online-net}, we only consider $s \in \mathcal{S}(t)$ for candidates of change points, where $\mathcal{S}(t) = \{t - 2^{j}, j = 0, 1, \ldots, \lfloor \log(t)/\log(2)\rfloor - 1\}$ is a set of geometric scale grid points.  Once the criteria

	\begin{equation}\label{eq-criterion}
		\|\widetilde{B}_{s, t}\|_{\mathrm{F}} > C\log^{1/2}(t/\alpha) \quad \mbox{and} \quad (\widehat{A}_{s, t}, \widetilde{B}_{s, t}/\|\widetilde{B}_{s, t}\|_{\mathrm{F}}) > b_t
	\end{equation}
	are met, we declare that there exists a change point at $t$.  

The criteria in \eqref{eq-criterion} are constructed based on two independent samples $\{A(t)\}$ and $\{B(t)\}$.  In practice, one can achieve this by splitting data into odd and even indices subsamples.  For each integer pair $(s, t)$, the quantity $\widetilde{B}_{s, t}$ is a function of the CUSUM statistic $\widehat{B}_{s, t}$, obtained by the subroutine \Cref{alg-usvt}. In fact, $\widetilde{B}_{s, t}$   is a universal singular value thresholding (USVT) estimator.  The USVT algorithm was proposed in \cite{Chatterjee2015}, for the purpose of estimating low-rank high-dimensional sparse matrices.  

The criteria \eqref{eq-criterion} have two components.  We first need to check that the USVT estimator $\|\widetilde{B}_{s, t}\|_{\mathrm{F}}$ is large enough.  In theory, this is required to prompt near optimal detection delay.  Intuitively speaking, change points would only occur, if $\|\widetilde{B}_{s, t}\|_{\mathrm{F}}$ is large.  Provided that this criterion holds, we then check that the matrix inner product $(\widehat{A}_{s, t}, \widetilde{B}_{s, t}/\|\widetilde{B}_{s, t}\|_{\mathrm{F}})$ is large enough.  The data splitting is summoned due to the fact that for any Bernoulli random variable $X$, $X^2 = X$.  Therefore, in order to detect the change in terms of the Frobenius norm, data splitting helps to estimate the squared means of Bernoulli random variables.  

 In view of the whole procedure, there is a sequence of tuning parameters.  The tuning parameters 
\[
\{\tau_{j, s, u}, \, j = 1, 2, \, u = 2, 3, \ldots, \, s \in \mathcal{S}(u)\}
\]
are used in the subroutine \Cref{alg-usvt}.  As suggested in \cite{xu2018rates}, the parameters $\tau_{1, \cdot, \cdot}$ serve as cutoffs of the upper bound on sample fluctuations; and the parameters $\tau_{2, \cdot, \cdot}$ are chosen to be the entry-wise maximum norms of the matrices of interest.  The tuning parameter $\alpha \in (0, 1)$ is the tolerance of Type-I errors and acts as an upper bound on the probability of returning at least one false alarm.  The  thresholds $\{b_t\}$ are upper bounds on the inner products when there is no change point.  More detailed discussions and guidance on tuning parameter selection are provided in Sections~\ref{sec-theory} and \ref{sec-numeric}.

\begin{algorithm}[!ht]
	\begin{algorithmic}
		\INPUT Symmetric matrix $A \in \mathbb{R}^{n \times n}$, $\tau_1, \tau_2 > 0$.
		\State $(\lambda_i, v_i) \leftarrow $ the $i$th eigen-pair of $A$, with $|\lambda_1| \geq \cdots \geq |\lambda_n|$;
		\State $A' \leftarrow \sum_{i:\, |\lambda_i| \geq \tau_1} \lambda_i v_i v_i^{\top}$;
		\State $A'' \leftarrow $ a matrix with $(i, j)$th entry $(A'')_{ij}$ satisfying
			\[
				(A'')_{ij} = \begin{cases}
 						(A')_{ij}, & |(A')_{ij}| \leq \tau_2,\\
 						\mathrm{sign}((A')_{ij}) \tau_2, & |(A')_{ij}| > \tau_2.
 					\end{cases}
			\]
		\OUTPUT $A''$.
		\caption{Universal Singular Value Thresholding.  $\mathrm{USVT}(A, \tau_1, \tau_2)$} \label{alg-usvt}
	\end{algorithmic}
\end{algorithm} 

\begin{algorithm}[!ht]
	\begin{algorithmic}
		\INPUT $\{A(u), B(u)\}_{u=1, 2, \ldots} \subset \mathbb{R}^{n \times n}$, $\{b_u, \tau_{1, s, u}, \tau_{2, s, u}, u = 2, 3, \ldots, s = 1, 2, \ldots, u\} \subset \mathbb{R}, \alpha \in (0, 1)$.
		\State $t \leftarrow 1$;
		\State $\mathrm{FLAG} \leftarrow 0$;
		\While{$\mathrm{FLAG} = 0$}  
			\State{$t \leftarrow t + 1$;}
			\State{$J \leftarrow \lfloor \log(t)/\log(2)\rfloor$;}
			\State{$j \leftarrow 0$;}
			\While{$j < J$ and $\mathrm{FLAG} = 0$}
				\State{$s_j \leftarrow t-2^j$;}
				\State{$\widetilde{B}_{s_j, t} \leftarrow \mathrm{USVT}(\widehat{B}_{s_j, t}, \tau_{1, s_j, t}, \tau_{2, s_j, t})$;}
				\State{$\mathrm{FLAG} = \mathbbm{1} \left\{(\widehat{A}_{t-s_j, t}, \widetilde{B}_{t-s_j, t}/\|\widetilde{B}_{t-s_j, t}\|_{\mathrm{F}}) > b_t\right\}\mathbbm{1} \left\{\|\widetilde{B}_{t-s_j, t}\|_{\mathrm{F}} > C\log^{1/2}(t/\alpha)\right\}$;}			
				\State{$j \leftarrow j + 1$;}
			\EndWhile
		\EndWhile
		\OUTPUT $t$.
		\caption{Network online change point detection} \label{alg-online-net}
	\end{algorithmic}
\end{algorithm} 

In addition, we also present a variant of \Cref{alg-online-net} in \Cref{alg-online-net-variant}.  Note that the main difference between these two algorithms is that the tuning parameter $\alpha \in (0, 1)$ is replaced by $\gamma \in \mathbb{N}$ in \Cref{alg-online-net-variant}.  Inputs are changed correspondingly.  These two algorithms represent two popular ways of controlling Type-I errors.  The tuning parameter $\gamma$ is in fact a lower bound on the average run length.  In other words, a  choice of $\gamma$ implies that, when there is no change point, the expected time of the first false alarm is at least $\gamma$.

There is no algorithmic differences between Algorithms~\ref{alg-online-net} and \ref{alg-online-net-variant}.  Their theoretical differences will be explained in \Cref{sec-theory}, and the tuning parameter selection differences will be discussed in \Cref{sec-numeric}.

\begin{algorithm}[!ht]
	\begin{algorithmic}
		\INPUT $\{A(u), B(u)\}_{u=1, 2, \ldots} \subset \mathbb{R}^{n \times n}$, $\{b_{u}, \tau_{1, s, u}, \tau_{2, s, u}, u = 2, 3, \ldots, s = 1, 2, \ldots, u\} \subset \mathbb{R}, \gamma \in \mathbb{N}$.
		\State $t \leftarrow 1$;
		\State $\mathrm{FLAG} \leftarrow 0$;
		\While{$\mathrm{FLAG} = 0$}  
			\State{$t \leftarrow t + 1$;}
			\State{$J \leftarrow \lfloor \log(t)/\log(2)\rfloor$;}
			\State{$j \leftarrow 0$;}
			\While{$j < J$ and $\mathrm{FLAG} = 0$}
				\State{$s_j \leftarrow t-2^{j-1}$;}
				\State{$\widetilde{B}_{s_j, t} \leftarrow \mathrm{USVT}(\widehat{B}_{s_j, t}, \tau_{1, s_j, t}, \tau_{2, s_j, t})$;}
				\State{$\mathrm{FLAG} = \mathbbm{1} \left\{(\widehat{A}_{s_j, t}, \widetilde{B}_{s_j, t}/\|\widetilde{B}_{s_j, t}\|_{\mathrm{F}}) > b_t\right\}\mathbbm{1} \left\{\|\widetilde{B}_{s_j, t}\|_{\mathrm{F}} > C\log^{1/2}(\gamma)\right\}$;}
				\State{$j \leftarrow j + 1$;}
			\EndWhile
		\EndWhile
		\OUTPUT $t$.
		\caption{Network online change point detection -- a variant} \label{alg-online-net-variant}
	\end{algorithmic}
\end{algorithm}

\section{Theory}\label{sec-theory}

This section consists of all the theoretical results we develop in this paper, with all the technical details in the Appendix.  This section is organised as follows.  All the assumptions are stated and discussed in \Cref{sec-assumptions}.  The theoretical guarantees of Algorithms~\ref{alg-online-net} and \ref{alg-online-net-variant} are provided in \Cref{sec-main-results}.  To investigate the fundamental limits, we established a minimax lower bound on the detection delay in \Cref{sec-lb}, with an NP-hard procedure which is nearly minimax optimal studied in \Cref{sec-np-hard}.  To conclude, we provide some additional discussions through comparisons with existing work in \Cref{sec-np-hard}.

\subsection{Assumptions}\label{sec-assumptions}

Before arriving at our main results, we start by introducing some assumptions.  The following three assumptions introduce the sparsity parameter, describe the one change point and no change point scenarios, respectively.

\begin{assumption}\label{assump-1-net}
Assume that $\{A(1), A(2), \ldots\} \subset \mathbb{R}^{n \times n}$ is a sequence of inhomogeneous Bernoulli networks satisfying $\mathbb{E}\{A(i)\} = \Theta(i) \in \mathbb{R}^{n \times n}$, $i = 1, 2, \ldots$, and 
	\[
		\sup_{i = 1, 2, \ldots}\|\Theta(i)\|_{\infty} = \rho,
	\]
	where $\rho n \geq \log(n)$.
\end{assumption}

\begin{assumption}[One change point scenario] \label{assump-2-net}
Assume that there exists $\Delta \in \mathbb{N}^*$ such that
	\[
		\Theta(1) = \cdots = \Theta(\Delta) = \Theta_1 \quad \mbox{and} \quad \Theta(\Delta + 1) = \Theta(\Delta + 2) = \cdots = \Theta_2.
	\]	
	In addition, let 
	\[
		\kappa_0 = \frac{\kappa}{n\rho} = \frac{\|\Theta_1 - \Theta_2\|_{\mathrm{F}}}{n\rho} > 0 \quad \mbox{and} \quad r = \mathrm{rank}(\Theta_1 - \Theta_2).
	\]
\end{assumption}

\begin{assumption}[No change point scenario]\label{assump-2-0-net}
Assume that 
	\[
		\Theta(1) = \Theta(2) = \cdots = \Theta.
	\]	
\end{assumption}

In the one change point scenario, in view of Assumptions~\ref{assump-1-net} and \ref{assump-2-net}, we see that the change point detection problem is characterised by the following parameters: the network size $n$, the entry-wise sparsity parameter $\rho$, the size of the uncontaminated sample $\Delta$, the normalised jump size $\kappa_0$, and the low-rank parameter~$r$.

The jump size $\kappa$ is defined to be the Frobenius norm of the difference between two consecutive but distinct graphon matrices.  The choice of the Frobenius norm is tailored to the context of dynamic networks.  Arguably, the most popular statistical network model is the stochastic block models \citep{HollandEtal1983}.  If both $\Theta_1$ and $\Theta_2$ are graphon matrices of two stochastic block models, then the Frobenius norm can explicitly reflect the magnitude of the change, as compared to other matrix norms including the operator norm and the supremum norm.  For instance, if  the community structure stays unchanged, but the between community probability changes from $p_1$ to $p_2$ in a community of size $n/2$, then $\kappa = n|p_1 - p_2|/2$.  If the between and within community probabilities, $p_1$ and $p_2$, remain the same, but the community structure changes from a balanced 2-community network to a balanced 3-community network, then $\kappa = \sqrt{13/18}n|p_1 - p_2|$.  Since $\kappa \in (0, n\rho]$, the normalised jump size $\kappa_0$ is scale free and satisfies that $\kappa_0 \in (0, 1]$.

Without further restrictions, the low-rank parameter $r$ is allowed to be $r \in \{1, \ldots, n\}$.  Note that, the introduction of the parameter $r$ is on the difference matrix and we allow for arbitrary structure of each graphon \emph{per se}.

\subsection{Main results}\label{sec-main-results}

Recall that our missions are as follows.  When there is a change point, we wish to declare the existence of the change point as soon as it appears.  The distance between the change point estimator and the change point is called detection delay, which is to be minimised.  On the other hand, it is also vital to control the false alarms.  When there is no change point, we either control the probability of declaring change points, or the expected time of the first false alarm.  These two different ways of controlling  false alarms are in fact Algorithms~\ref{alg-online-net} and \ref{alg-online-net-variant}.  Their theoretical guarantees are provided in \Cref{thm-net-delay-varying-N} and \Cref{thm-selection}, respectively.  In the results presented in this section, we assume the existence of two independent sequences of adjacency matrices.  In practice, this can be done by splitting the data sequence into odd and even index sequences.  

\begin{theorem}\label{thm-net-delay-varying-N}
For any $\alpha \in (0, 1)$, assume that the data $\{A(t), \, B(t)\}_{t = 1, 2, \ldots}$ are two independent sequences of adjacency matrices satisfying \Cref{assump-1-net}.  Let $\widehat{t}$ be the output of \Cref{alg-online-net} with inputs $\{A(t), \, B(t)\}_{t = 1, 2, \ldots}$, 
	\begin{align*}
		& b_u = C_1\sqrt{\rho\log\left(\frac{u}{\alpha}\right)},  \,\, \tau_{1, s, u} = C\sqrt{n\rho} + \sqrt{2\log\left\{ \frac{u(u+1) \log(u)}{ \alpha \log(2)}\right\}} \,\, \mbox{and} \,\, \tau_{2, s, u} = \sqrt{\frac{(u-s)s}{u}} \rho.
	\end{align*}
	\begin{itemize}
	\item [(i)] (No change point.) If $\{A(t), \, B(t)\}_{t = 1, 2, \ldots}$ in addition satisfy \Cref{assump-2-0-net}, the it holds that 
		\[
			\mathbb{P}\left\{\underset{m \in \mathbb{N}}{\bigcap }   \{ \widehat{t} > m  \}   \right\} > 1 - \alpha.
		\]	
	\item [(ii)] (One change point.) If $\{A(t), \, B(t)\}_{t = 1, 2, \ldots}$ in addition satisfy \Cref{assump-2-net}, and there exists a large enough absolute constant $C_{\mathrm{SNR}} > 0$ such that
		\begin{equation}\label{eq-snr}
			\Delta \kappa_0^2 n \rho > C_{\mathrm{SNR}} r \log(\Delta/\alpha),
		\end{equation}
		then
		\[
			\mathbb{P}\left\{0 < \widehat{t} - \Delta < \frac{C_drn\rho \log(\Delta/\alpha)}{\kappa^2} = \frac{C_d r\log(\Delta/\alpha)}{\kappa_0^2n\rho}\right\} > 1 - \alpha,
		\]
		where $C, C_1, C_d > 0$ are absolute constants.
	\end{itemize}
\end{theorem}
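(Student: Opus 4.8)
The plan is to exploit the data splitting, which is what makes the whole scheme tractable: the thresholded statistic $\widetilde{B}_{s,t}$ and hence the direction $\widehat{v}_{s,t} = \widetilde{B}_{s,t}/\|\widetilde{B}_{s,t}\|_{\mathrm{F}}$ depend only on the $B$-sequence and are therefore independent of $\{A(t)\}$. I would accordingly split the analysis into a purely $B$-driven part (the operator norm of $\widehat{B}_{s,t}$, the USVT output, and the first criterion of \eqref{eq-criterion}) and an $A$-driven part (the inner product in the second criterion), the latter to be controlled \emph{conditionally} on $\widehat{v}_{s,t}$ as a scalar concentration problem.

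For part (i), under \Cref{assump-2-0-net} one has $\mathbb{E}(\widehat{B}_{s,t}) = \widehat{\Theta}_{s,t} = 0$, so $\widehat{B}_{s,t}$ is a symmetric matrix with independent, centered entries of variance at most $\rho$. I would invoke an operator-norm concentration bound for such inhomogeneous random matrices, combining a bound on the expected operator norm (of order $\sqrt{n\rho}$) with a bounded-difference/sub-Gaussian tail, to show that $\mathbb{P}\{\|\widehat{B}_{s,t}\|_{\mathrm{op}} > \tau_{1,s,t}\}$ is small. The explicit inflation $\sqrt{2\log\{u(u+1)\log(u)/(\alpha\log 2)\}}$ in $\tau_{1,s,u}$ is tuned so that the per-pair failure probability is of order $\alpha\log(2)/\{t(t+1)\log(t)\}$; multiplying by the $J \approx \log(t)/\log(2)$ scales examined at time $t$ and summing the telescoping series $\sum_{t} 1/\{t(t+1)\}$ keeps the total below $\alpha$. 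On the complementary event the hard-thresholding step of USVT discards every eigenvalue, so $\widetilde{B}_{s,t} = 0$ and the first criterion $\|\widetilde{B}_{s,t}\|_{\mathrm{F}} > C\log^{1/2}(t/\alpha)$ never fires, i.e.\ no false alarm ever occurs.

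For part (ii), the same null argument applied to the finite range $t \le \Delta$ (where all observations share mean $\Theta_1$, hence $\mathbb{E}(\widehat{B}_{s,t}) = 0$) gives $\widehat{t} > \Delta$ on an event of probability at least $1-\alpha/2$. For the upper bound I would compute the population CUSUM: for $s \le \Delta < t$ one finds $\widehat{\Theta}_{s,t} = \sqrt{s/\{(t-s)t\}}\,(t-\Delta)(\Theta_1-\Theta_2)$, so that $\|\widehat{\Theta}_{s,t}\|_{\mathrm{F}} = \sqrt{s/\{(t-s)t\}}\,(t-\Delta)\kappa$, which is maximised by taking $s$ as close to $\Delta$ as the dyadic grid allows. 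Writing $d^\ast = C_d r\log(\Delta/\alpha)/(\kappa_0^2 n\rho) = C_d r n\rho\log(\Delta/\alpha)/\kappa^2$, I would set $j^\ast = \lceil \log_2 d^\ast\rceil$ and $t^\ast = \Delta + 2^{j^\ast}$, so that the scale-$j^\ast$ split is exactly $s_{j^\ast} = t^\ast - 2^{j^\ast} = \Delta$ and $\|\widehat{\Theta}_{\Delta,t^\ast}\|_{\mathrm{F}} \asymp \sqrt{d^\ast}\,\kappa \asymp \sqrt{r n\rho\log(\Delta/\alpha)}$; the SNR condition \eqref{eq-snr} guarantees $d^\ast \ll \Delta$, which both validates this approximation and ensures $j^\ast < J$.

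It then remains to verify both criteria at the pair $(\Delta, t^\ast)$. For the first, I would use the USVT error bound $\|\widetilde{B}_{\Delta,t^\ast} - \widehat{\Theta}_{\Delta,t^\ast}\|_{\mathrm{F}}^2 \lesssim r\tau_1^2$ for the rank-$r$ signal, with $\tau_1^2 \lesssim n\rho + \log(\Delta/\alpha)$; since the delay is chosen so that the signal energy $\|\widehat{\Theta}_{\Delta,t^\ast}\|_{\mathrm{F}}^2 \asymp d^\ast\kappa^2 \gg r\tau_1^2$, this yields $\|\widetilde{B}_{\Delta,t^\ast}\|_{\mathrm{F}} \asymp \|\widehat{\Theta}_{\Delta,t^\ast}\|_{\mathrm{F}}$ (clearing criterion one) and, via a singular-subspace perturbation argument, that $\widehat{v}_{\Delta,t^\ast}$ is close to $\widehat{\Theta}_{\Delta,t^\ast}/\|\widehat{\Theta}_{\Delta,t^\ast}\|_{\mathrm{F}}$. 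For the second criterion, conditioning on $\{B(t)\}$ makes $(\widehat{A}_{\Delta,t^\ast}, \widehat{v}_{\Delta,t^\ast})$ a sum of independent centered terms with mean $\approx \|\widehat{\Theta}_{\Delta,t^\ast}\|_{\mathrm{F}}$ and per-coordinate variance at most $\rho$; a Bernstein bound (using the entry-wise cap $\tau_2$ to control the boundedness term) gives fluctuations of order $\sqrt{\rho\log(t^\ast/\alpha)} \asymp b_{t^\ast}$, while the mean dwarfs $b_{t^\ast}$, so the inner product exceeds the threshold. Intersecting the no-early-detection event, the $B$-events, and the conditional $A$-event, with the error budget split across them, yields $0 < \widehat{t}-\Delta \le t^\ast - \Delta < 2d^\ast$ with probability at least $1-\alpha$. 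I expect the main obstacle to be the USVT step in the signal regime: simultaneously certifying retention of enough signal energy and consistency of the estimated direction $\widehat{v}$, while keeping the operator-norm and entry-wise thresholds mutually consistent with the concentration inequalities used, and packaging these $B$-events together with the conditional $A$-event into a single probability-$(1-\alpha)$ statement.
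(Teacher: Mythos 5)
Your proposal is correct and follows essentially the same route as the paper's proof: the same data-splitting structure, the same null argument (operator-norm concentration of $\widehat{B}_{s,t}$, union-bounded over the dyadic scales and over time so that USVT returns the zero matrix and the first criterion never fires), the same choice of the dyadic time $t^\ast = \Delta + 2^{j^\ast}$ that places a grid point exactly at $\Delta$, the USVT Frobenius error bound for the rank-$r$ population CUSUM, and a conditional Bernstein bound on the inner product that exploits the $\tau_2$ entry-wise cap together with the lower bound on $\|\widetilde{B}\|_{\mathrm{F}}$ from the first criterion. The only cosmetic difference is that the paper obtains direction consistency of $\widetilde{B}_{\Delta,t^\ast}/\|\widetilde{B}_{\Delta,t^\ast}\|_{\mathrm{F}}$ via an elementary identity for normalized matrices rather than the subspace-perturbation argument you suggest, which changes nothing substantive.
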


 We can see from \Cref{thm-net-delay-varying-N}(i) that when there is no change point, with probability at least~$1 - \alpha$, \Cref{alg-online-net} will not raise any false alarm.  On the other hand, if there is a change point, then it follows from \Cref{thm-net-delay-varying-N}(ii) that the detection delay is at most of order
	\[
		\frac{r\log(\Delta/\alpha)}{\kappa_0^2 n \rho},
	\]
	with probability at least $1 - \alpha$.  
	
In fact, the condition \eqref{eq-snr} can be regarded as a sort of signal-to-noise ratio condition and is a mild constraint.  We list a few special cases here.  
	\begin{itemize}
	\item (Small sample size.) If $\kappa_0, \rho, r, \alpha \asymp 1$, then as long as $\Delta \gtrsim \log^2(\Delta)/n$, \eqref{eq-snr} holds.  If the network size $n$ is large, then this shows that \Cref{alg-online-net} can detect change points with a very small number of uncontaminated samples.
	\item (Large rank matrices.) If $\kappa_0 \asymp 1$, $\rho \asymp \log^2(n)/n$, $\alpha \asymp 1$ and $\Delta \asymp n$, then the rank parameter $r$ is allowed to be $r \asymp n$.  This means that provided the size of uncontaminated sample is comparable with the size of networks, then it is not necessary to have a low-rank assumption imposed on the difference of the graphons.
	\item (Small jump size.)  If $\rho, r, \Delta,\alpha \asymp 1$, then, provided that $\kappa_0 \gtrsim n^{-1/2}$,  condition \eqref{eq-snr} holds.  This means that the normalised jump size can decrease to zero  if the size of the network diverges.
	\end{itemize}

Finally, we remark on the choices of the tuning parameters.  As we have mentioned in \Cref{sec-methods},  the tuning parameters $\{\tau_{1, s, u}\}$ are the cutoffs due to the low-rank parameter and $\{\tau_{2, s, u}\}$ are to bound the entry-wise maximum norms.  The theoretical choices of these two sets of parameters can be found in \Cref{lem-usvt-error-control}, with the aim of ensuring the good performances of the USVT estimators.  The tuning parameter $\alpha$ is completely determined by practitioners, reflecting the tolerance of Type-I errors.  The sequence $\{b_u\}$ reflects an upper bound on the statistics' fluctuations when there is no change point. The rate of  $\{b_u\}$  is determined in \Cref{lem-s2}.

\begin{corollary} \label{thm-selection}
For $\gamma \geq 2$, assume that the data $\{A(t), \, B(t)\}_{t = 1, 2, \ldots}$ are two independent sequences of adjacency matrices satisfying \Cref{assump-1-net}.  Let $\widehat{t}$ be the output of \Cref{alg-online-net-variant} with inputs $\{A(t), \, B(t)\}_{t = 1, 2, \ldots}$, 
	\begin{align*}
		& b_u = C_1\sqrt{\rho\log\left(\gamma\right)}, \,\, \tau_{1, s, u} = C\sqrt{n\rho} + \sqrt{2\log\left\{\frac{2(\gamma+1)\gamma \log(\gamma+1)}{\log(2)}\right\}}  \,\, \mbox{and}  \,\, \tau_{2, s, u} = \sqrt{\frac{(u-s)s}{u}} \rho.
	\end{align*}
	\begin{itemize}
	\item [(i)] (No change point.) If $\{A(t), \, B(t)\}_{t = 1, 2, \ldots}$ in addition satisfy \Cref{assump-2-0-net}, then
		\[
			\mathbb{E}(\widehat{t}) \geq \gamma,
		\]
		where $\mathbb{E}(\widehat{t})$, under \Cref{assump-2-0-net}, is called the average run length.
	\item [(ii)] (One change point.) If $\{A(t), \, B(t)\}_{t = 1, 2, \ldots}$ in addition satisfy \Cref{assump-2-net}, and it holds that
		\begin{equation}\label{eq-m-gamma-cond}
			\gamma \geq \Delta \quad \mbox{and} \quad  \Delta \kappa_0^2 n \rho > C_{\mathrm{SNR}} r \log(\gamma),
		\end{equation}
		where $C_{\mathrm{SNR}} > 0$ is an absolute constant, then 
		\begin{equation}\label{eq-detect-delay-cor}
			\mathbb{P}\left\{0 < \widehat{t} - \Delta < \frac{C_drn\rho \log(\gamma)}{\kappa^2} = \frac{C_d r\log(\gamma)}{\kappa_0^2n\rho}\right\} > 1 - \gamma^{-1},
		\end{equation}
		where $C, C_1, C_d > 0$ are absolute constants.
	\end{itemize}	
\end{corollary}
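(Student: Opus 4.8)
The plan is to read \Cref{thm-selection} as a re-calibration of \Cref{thm-net-delay-varying-N} in which the Type-I budget $\alpha$ is replaced by $\gamma^{-1}$ and the $u$-dependent thresholds are frozen at the horizon $\gamma$. Under this identification part (ii) will follow almost verbatim from the detection-delay analysis of \Cref{thm-net-delay-varying-N}(ii), while part (i) needs genuinely new work: it converts a uniform-in-time false-alarm \emph{probability} bound into a lower bound on the \emph{expected} run length.

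For part (i), I would begin from the elementary inequality, valid since $\widehat t$ is integer-valued with $\widehat t \geq 2$,
\[
\mathbb{E}(\widehat t) \;\geq\; (\gamma+1)\,\mathbb{P}(\widehat t \geq \gamma+1) \;=\; (\gamma+1)\bigl(1 - \mathbb{P}(\widehat t \leq \gamma)\bigr),
\]
so the claim $\mathbb{E}(\widehat t) \geq \gamma$ reduces to proving $\mathbb{P}(\widehat t \leq \gamma) \leq (\gamma+1)^{-1}$ under \Cref{assump-2-0-net}. The event $\{\widehat t \leq \gamma\}$ is the event that the two-part criterion of \Cref{alg-online-net-variant} triggers for some pair $(s_j,u)$ with $u\in\{2,\dots,\gamma\}$ and $j\in\{0,\dots,\lfloor\log u/\log 2\rfloor-1\}$, i.e.\ at most $\sum_{u=2}^{\gamma}\log(u)/\log 2 \lesssim \gamma\log(\gamma)/\log 2$ tests. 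For each test I would bound the probability of the intersection of the two criteria by combining \Cref{lem-usvt-error-control}, which controls the event $\{\|\widetilde B_{s_j,u}\|_{\mathrm F} > C\log^{1/2}(\gamma)\}$ through the additive term $\sqrt{2\log\{2(\gamma+1)\gamma\log(\gamma+1)/\log 2\}}$ in $\tau_{1,s,u}$, with \Cref{lem-s2}, which controls the inner-product exceedance through $b_u = C_1\sqrt{\rho\log\gamma}$; independence of $\{A(t)\}$ and $\{B(t)\}$ lets these two tail bounds be taken together. The entire point of the peculiar argument $2(\gamma+1)\gamma\log(\gamma+1)/\log 2$ is that each per-test probability is of order $\log 2/\{2(\gamma+1)\gamma\log(\gamma+1)\}$, so the union bound over the $\lesssim\gamma\log\gamma$ tests lands at $(\gamma+1)^{-1}$.

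For part (ii), I would invoke the detection-delay machinery of \Cref{thm-net-delay-varying-N}(ii) with $\alpha=\gamma^{-1}$. The hypothesis $\gamma\geq\Delta$ in \eqref{eq-m-gamma-cond} gives $\log(\Delta/\alpha)=\log(\Delta\gamma)\leq 2\log\gamma$, so the signal-to-noise requirement $\Delta\kappa_0^2 n\rho > C_{\mathrm{SNR}} r\log\gamma$ matches \eqref{eq-snr} up to an absolute constant, and the delay $C_d r\log(\Delta/\alpha)/(\kappa_0^2 n\rho)$ coincides, up to $C_d$, with the bound in \eqref{eq-detect-delay-cor}. The ``$\widehat t>\Delta$'' half (no early alarm) is just the part-(i) null analysis applied on $[2,\Delta]\subseteq[2,\gamma]$; the ``$\widehat t<\Delta+d$'' half uses the same signal lower bound on $\|\widetilde B_{s_j,t}\|_{\mathrm F}$ and on the aligned inner product as in the theorem. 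The one structural difference is that \Cref{alg-online-net-variant} uses the constant thresholds $C\log^{1/2}\gamma$ and $b_u=C_1\sqrt{\rho\log\gamma}$; but on the relevant range $\widehat t\leq\gamma$, $\gamma\geq\Delta$ makes these no larger than their $u$-dependent counterparts, so detection is no harder and occurs with probability at least $1-\gamma^{-1}$.

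The main obstacle I anticipate is the exact constant bookkeeping in part (i): the number of candidate tests up to time $\gamma$ grows like $\gamma\log\gamma$, and one must verify that the USVT tail in \Cref{lem-usvt-error-control} and the fluctuation tail in \Cref{lem-s2} are each genuinely summable to the prescribed budget, and in particular that the dimension-dependent $C\sqrt{n\rho}$ part of $\tau_{1,s,u}$ (which absorbs the operator-norm concentration of the null CUSUM) is cleanly separated from the union-bound slack so that the total lands at exactly $(\gamma+1)^{-1}$ rather than a generic $O(\gamma^{-1})$. A secondary point is to confirm that detection in part (ii) provably occurs at some $\widehat t\leq\gamma$, so that the horizon-$\gamma$ calibration underpinning part (i) remains the operative regime throughout the argument.
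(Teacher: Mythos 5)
Your proposal is correct and follows essentially the same route as the paper: part (i) uses the identical expectation-to-tail inequality $\mathbb{E}(\widehat t)\geq(\gamma+1)\,\mathbb{P}(\widehat t\geq\gamma+1)$ together with the union bound of per-test tails $\log 2/\{2(\gamma+1)\gamma\log(\gamma+1)\}$ over the $\lesssim\gamma\log\gamma$ candidate tests (which the paper packages as the single event $\mathcal{F}_2$ of \Cref{lem-usvt-error-control-2}), and part (ii) reruns the proof of \Cref{thm-net-delay-varying-N} conditional on that event with the calibration $\alpha\asymp\gamma^{-1}$, $\log(\Delta\gamma)\leq 2\log\gamma$, exactly as you describe. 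The only cosmetic difference is that the paper never invokes \Cref{lem-s2} under the null: on $\mathcal{F}_2$ the USVT output is exactly the zero matrix, so the Frobenius-norm gate alone blocks every alarm and the inner-product criterion never comes into play, making your per-test intersection bound redundant (though harmless).
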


\Cref{thm-selection} is \Cref{thm-net-delay-varying-N}'s counterpart based on \Cref{alg-online-net-variant}.  Comparisons of \Cref{thm-net-delay-varying-N}(i) and \Cref{thm-selection}(i) show that Algorithms~\ref{alg-online-net} and \ref{alg-online-net-variant} have different strategies in controlling the false alarms.  \Cref{thm-net-delay-varying-N}(i) shows that the Type-I error across the whole time horizon is upper bounded by $\alpha$ if \Cref{alg-online-net} is deployed. In contrast,  \Cref{thm-selection}(i) ensures that if \Cref{alg-online-net-variant} is used then the expected time of the first false alarm is at least $\gamma$.

Both of these two ways to control the false alarms are widely used in the literature.    We show in \Cref{thm-net-delay-varying-N} and \Cref{thm-selection} that, if $\gamma \geq \Delta$ and 
	\begin{equation}\label{eq-gamma-alpha}
		\gamma \asymp \Delta/\alpha,
	\end{equation}
	then these two methods provide the same order of detection delay.    If $\gamma < \Delta$, then the same  localisation  rate of \Cref{thm-selection} holds for  $\max\{\widehat{t} -\Delta,0\}$ instead of  $\widehat{t}-\Delta$.

Finally, we summarise the differences between Algorithms~\ref{alg-online-net} and \ref{alg-online-net-variant}.  Since both $\alpha$ and $\gamma$ reflect the preferences on Type-I error  control, these two tuning parameters can be specified by the users.  Although we have specified theoretical guidance on all the other tuning parameters, in practice, they still involve either unknown quantities or unspecified constants.  In order to tune these parameters, \Cref{alg-online-net-variant} might be handier.  One may have access to historical data under the pre-change-point distribution, and tune all the tuning parameters such that the average run length is $\gamma$.  This is less natural under the strategy of \Cref{alg-online-net}, unless the whole time course has a pre-specified endpoint, since the Type-I error is across the whole time course.   This is in fact how we tune the tuning parameters in \Cref{sec-numeric} for \Cref{alg-online-net}.

\subsection{A lower bound}\label{sec-lb}

In \Cref{thm-net-delay-varying-N}, we show that we are able to detect change points with the order of the detection delay upper bounded by 
	\begin{equation} \label{eqn:delay}
		\frac{r\log(\Delta/\alpha)}{\kappa_0^2 n \rho}.
	\end{equation}
	In this subsection, we will investigate the optimality of this upper bound.

\begin{proposition} \label{thm-lb}
Assume that $\{A(t)\}_{t = 1, 2, \ldots}$ is a sequence of independent adjacency matrices satisfying Assumptions~\ref{assump-1-net} and \ref{assump-2-net}. 
  Denote the joint distribution of $\{A(t)\}_{t = 1, 2, \ldots}$ as $P_{\kappa, \Delta}$.  Consider the class of estimators $\mathcal{D}$ defined as
	\[
		\mathcal{D} = \left\{T: \, T \mbox{ is a stopping time and satisfies } \mathbb{P}_{\infty} (T < \infty) \leq \alpha\right\},
	\]
	where $\mathbb{P}_{\infty}$ indicates $\Delta = \infty$.  Then for sufficiently small $\alpha \in (0, 1)$, there exists an absolute constant $c > 0$ such that 
	we have that
	\[
		\inf_{\widehat{t} \in \mathcal{D}}\sup_{P_{\kappa, \Delta}} \mathbb{E}_{P}\left\{(\widehat{t} - \Delta)_+\right\} \geq \frac{c\log(1/\alpha)}{\kappa^2_0 n \rho} \max\left\{1, \, r^2/n\right\}.
	\]
\end{proposition}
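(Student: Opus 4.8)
The plan is to establish the bound by a change-of-measure (second-moment) argument against a composite alternative, splitting $\max\{1,r^2/n\}$ into two prior constructions. Fix a target delay $m$ of the claimed order and let $\mathbb{P}_\infty$ be the no-change law. For a prior $\pi$ on admissible jumps $D=\Theta_2-\Theta_1$, let $\bar{\mathbb{P}}=\mathbb{E}_{D\sim\pi}[P_{\kappa,\Delta}^{D}]$ be the mixture law with change at $\Delta$, and let $\bar L_m$ be its likelihood ratio against $\mathbb{P}_\infty$ restricted to the post-change window $[\Delta+1,\Delta+m]$. The backbone is: for $T\in\mathcal{D}$ the event $E_m=\{T\le\Delta+m\}\in\mathcal{F}_{\Delta+m}$ satisfies $E_m\subseteq\{T<\infty\}$, so $\mathbb{P}_\infty(E_m)\le\alpha$, and by Cauchy--Schwarz $\bar{\mathbb{P}}(E_m)=\mathbb{E}_\infty[\mathbbm{1}_{E_m}\bar L_m]\le\sqrt{\alpha\,\mathbb{E}_\infty[\bar L_m^2]}$. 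If $\pi$ and $m$ can be chosen so that $\mathbb{E}_\infty[\bar L_m^2]\le 1/(4\alpha)$, then $\bar{\mathbb{P}}(E_m)\le 1/2$, whence $\sup_{P_{\kappa,\Delta}}\mathbb{E}_P[(T-\Delta)_+]\ge\mathbb{E}_{\bar{\mathbb{P}}}[(T-\Delta)_+]\ge m\,\bar{\mathbb{P}}(T>\Delta+m)\ge m/2$. This is exactly where $\log(1/\alpha)$ is meant to enter, through the budget $\mathbb{E}_\infty[\bar L_m^2]\lesssim 1/\alpha$.

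Because the networks are independent Bernoulli and the jump persists over the window, I would next factorise the second moment across entries and steps, obtaining $\mathbb{E}_\infty[\bar L_m^2]=\mathbb{E}_{(D,D')\sim\pi\otimes\pi}\prod_{i<j}\bigl(1+D_{ij}D'_{ij}/v_{ij}\bigr)^{m}$, where $v_{ij}=(\Theta_1)_{ij}(1-(\Theta_1)_{ij})\asymp\rho$ after placing the pre-change entries at level $\rho$ (the choice minimising per-step information subject to $\|\Theta_1\|_\infty\le\rho$). Since every factor is near $1$, this is $\approx\mathbb{E}_{D,D'}\exp\bigl(\tfrac{m}{2\rho}\langle D,D'\rangle_{\mathrm F}\bigr)$, so the problem reduces to designing $\pi$, supported on symmetric rank-$\le r$ matrices with $\|D\|_{\mathrm F}=\kappa$ and $\|\Theta_1+D\|_\infty\le\rho$, whose independent draws are nearly orthogonal in the averaged sense $\mathbb{E}_{D,D'}\langle D,D'\rangle_{\mathrm F}=\kappa^2/\max\{n,r^2\}$. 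For the ``$1$'' regime ($r\le\sqrt n$) I would take $D=\kappa\,vv^\top$ (symmetrised, diagonal removed) with $v$ spread over a random low-dimensional subspace, so that $\langle D,D'\rangle_{\mathrm F}=\kappa^2(v^\top v')^2$ has mean $\kappa^2/n$, giving effective information $\kappa^2/(n\rho)$ and the rate $\log(1/\alpha)/(\kappa_0^2 n\rho)$; for the ``$r^2/n$'' regime ($r\ge\sqrt n$) I would use $D=(\kappa/r)Q\Sigma Q^\top$ with $Q\in\mathbb{R}^{n\times r}$ a random frame and $\Sigma$ a signed diagonal, tuned to mean overlap $\kappa^2/r^2$. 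In both cases the entrywise bound is automatic since the typical entry is $\lesssim\kappa/n=\kappa_0\rho\le\rho$.

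The delicate point — and the step I expect to be the main obstacle — is controlling the exponential moment $\mathbb{E}_{D,D'}\exp\bigl(\tfrac{m}{2\rho}\langle D,D'\rangle_{\mathrm F}\bigr)$ all the way up to the target $m$. The \emph{mean} overlap delivers precisely the claimed rate, but $\langle D,D'\rangle_{\mathrm F}$ is a random quadratic form (e.g.\ $\kappa^2(v^\top v')^2\sim(\kappa^2/n)\chi^2$) whose heavy upper tail can inflate the second moment before $\lambda=m/(2\rho)$ reaches the budget, and an uncontrolled tail would erode the $\log(1/\alpha)$ factor. Obtaining the clean product of $\log(1/\alpha)$ and $\max\{1,r^2/n\}$ therefore requires the random Gram quantity to concentrate around its mean over enough effective directions that its log-moment-generating function stays in the linear regime up to the budget, while keeping that mean as small as $\kappa^2/\max\{n,r^2\}$; reconciling these two competing demands under the rank-$\le r$ and sparsity constraints is the crux. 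I would manage the tail either by truncating $\pi$ to a high-probability set on which $\langle D,D'\rangle_{\mathrm F}$ is close to its mean (the excluded mass costing only a negligible correction in the Cauchy--Schwarz step) or by a Hanson--Wright/Laplace-transform estimate for the quadratic form, and I expect the hypothesis ``$\alpha$ sufficiently small'' to be exactly the condition under which this truncation, and hence the stated bound, goes through.
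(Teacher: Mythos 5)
Your reduction (Cauchy--Schwarz against the false-alarm budget) and your factorised second moment $\mathbb{E}_\infty[\bar L_m^2]=\mathbb{E}_{D,D'}\prod_{i<j}\bigl(1+D_{ij}D'_{ij}/v_{ij}\bigr)^{m}$ are both correct \emph{for a prior in which the jump is held fixed across the window}, and you have located the crux accurately. But that crux is fatal, not a manageable technicality: at the target $m$, the requirement $\mathbb{E}_\infty[\bar L_m^2]\le 1/(4\alpha)$ fails for \emph{every} prior on the admissible set once $\alpha$ is below an absolute constant --- which is exactly the regime of the proposition. Concretely, with $m\kappa_0^2n\rho\asymp\log(1/\alpha)$ the exponent multiplying the overlap is $\lambda=m/(2\rho)$, so $\lambda\kappa^2\asymp n\log(1/\alpha)$. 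In your rank-one design, $\langle D,D'\rangle_{\mathrm F}=\kappa^2(v^\top v')^2\ge 0$ and aligned pairs destroy the bound: any prior on such matrices puts mass at least $e^{-Cn}$ on $\{\langle D,D'\rangle_{\mathrm F}\ge\kappa^2/2\}$ (pigeonhole over a constant-scale covering of the sphere; for Rademacher $v$ this is simply $\mathbb{P}(v'=\pm v)=2^{-n+1}$), and on that event the integrand is at least $e^{c\,m\kappa^2/\rho}=e^{c\,n\log(1/\alpha)}$, whence $\mathbb{E}_\infty[\bar L_m^2]\ge e^{n(c\log(1/\alpha)-C)}\gg 1/(4\alpha)$ as soon as $\log(1/\alpha)>C/c$. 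Equivalently, the exponential moment of the $\chi^2_1$-type overlap has radius of convergence $\asymp n/\kappa^2$ in $\lambda$, but you need it at $\lambda\asymp n\log(1/\alpha)/\kappa^2$. Neither of your proposed repairs touches this: truncating the prior $\pi$ cannot remove aligned \emph{pairs}, because every $D$ retained in the support still pairs with itself (high overlap is a property of the pair, not of either draw); and a Hanson--Wright bound merely re-derives the exponential tail that causes the divergence. Note also that your reading of ``$\alpha$ sufficiently small'' is inverted: shrinking $\alpha$ is precisely what breaks the single-shot second-moment method, since the budget $\lambda$ grows like $\log(1/\alpha)$; your argument could at best cover $\alpha$ bounded below by a constant, the opposite of the stated regime.

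The paper escapes through two devices your proposal lacks. First, its alternative \emph{re-randomises the planted structure at every post-change time}: the post-change matrices are i.i.d.\ draws from the mixture $P_1=2^{-n}\sum_u P_{1,u}$, with a fresh $u$ at each step (similarly a fresh corner matrix in the $r\gtrsim\sqrt n$ case), so the window-level chi-square factorises into per-step factors. Each factor is bounded by $\exp(4\kappa_0^2n\rho)$ via the same Hoeffding-plus-integration computation you envisage, but evaluated only at per-step scale (this is where the paper's condition $\rho+2\kappa_0^2n\rho<1$ enters), and over $m\asymp\log(1/\alpha)/(\kappa_0^2n\rho)$ steps the product is $\alpha^{-1/2}$, inside budget. (The cost of this device, which the paper does not dwell on, is that the re-randomised law is an exchangeable mixture rather than a fixed-graphon member of the class.) Second, instead of one Cauchy--Schwarz over the window, the paper runs a Lorden--Lai-type split of $\{\nu<T<\nu+m\}$ according to whether the log-likelihood ratio $Z_{\nu,T}$ is below or above $\tfrac{3}{4}\log(1/\alpha)$: the low-LLR piece is handled by a direct change of measure against the false-alarm constraint (cost $\alpha^{-3/4}\cdot\alpha=\alpha^{1/4}$, no second moment at all), and the high-LLR piece by a union bound over the $m$ time points plus a Markov bound on $\exp(Z_{\nu,\nu+l})$, which is exactly where the factorised chi-square enters. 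Any viable repair of your argument must achieve per-step rather than per-window control of the likelihood ratio --- by re-randomisation, by a sequential splitting of this kind, or both; no choice of prior, prior truncation, or quadratic-form tail inequality will make a single application of Cauchy--Schwarz deliver a delay lower bound linear in $\log(1/\alpha)$.
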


The change point estimators are all stopping time random variables satisfying that the overall Type-I error is controlled by $\alpha \in (0, 1)$.  The rate of the detection delay is lower bounded by 
	\[
		\frac{\log(1/\alpha)}{\kappa_0^2 n \rho}\max\left\{1, \, r^2/n\right\}.
	\]
	This means in the low-rank regime $r \lesssim \sqrt{n}$, we have the lower bound $\log(1/\alpha)\left(\kappa_0^2 n\rho\right)^{-1}$; in the large-rank regime $r \gtrsim \sqrt{n}$, the lower bound is of the order $\log(1/\alpha)r^2\left(\kappa_0^2 n^2\rho\right)^{-1}$.  In view of \Cref{thm-lb}, we see that \eqref{eqn:delay} is nearly-optimal, saving for a logarithmic factor, only in the extreme regimes, i.e.~$r \asymp n$ or $r \asymp 1$.  
	
It is then interesting to investigate the gap between the lower and upper bounds.  Recall that in the graphon estimation problems, \cite{gao2015rate} has shown that, the minimax rate of the mean squared error for estimating a rank-$r$  graphon $\Theta  \in \mathbb{R}^{n \times n}$ is 
	\[
		\inf_{\widehat{\Theta}} \sup_{\Theta} \mathbb{E}\left\{\frac{1}{n^2} \left\|\widehat{\Theta} - \Theta\right\|_{\mathrm{F}}^2 \right\} \asymp \frac{r^2 + n\log(r)}{n^2},
	\]
	where the upper bound is achieved by an NP-hard algorithm.  In fact, we can also adopt NP-hard procedures to match the lower bound in \Cref{thm-lb}, up to logarithmic factors.  

\subsection{An NP-hard procedure on stochastic block models}\label{sec-np-hard}

In this subsection, we first focus on the stochastic block models \citep{HollandEtal1983}.

\begin{definition}[Sparse Stochastic Block Model]\label{def-ssbm}
	A network is from a sparse stochastic block model with size $n$, sparsity parameter $\rho$, membership matrix $Z \in \mathbb{R}^{n \times r}$ and connectivity matrix $Q \in [0, 1]^{r \times r}$ if the corresponding adjacency matrix  satisfies 
		\[
			\mathbb{E} (A)= \rho ZQZ^{\top}- \mathrm{diag}\bigl(\rho ZQZ^{\top}\bigr) .
		\]
 		
 		The membership matrix $Z$ consists of $n$ rows, each of which has one and only one entry being~1 and has all the entries being 0; moreover, $Z$ is a column full rank matrix, i.e. $\mathrm{rank}(Z) =r $.  The sparsity parameter $\rho \in [0, 1]$ potentially depends on $n$.
\end{definition}

As we have already pointed out, the stochastic block models are special cases of the inhomogeneous Bernoulli networks defined in \Cref{def-inhomo-ber-net}. 

In \cite{gao2015rate}, an NP-hard estimator of stochastic block models' graphons is proposed.  In this subsection, we will replace the USVT estimator defined in \Cref{alg-usvt} and used in \Cref{alg-online-net} by the NP-hard estimator studied in \cite{gao2015rate}.  For completeness, we include the estimator construction below.

\begin{definition}[An NP-hard graphon estimator]\label{def-np-graphon}
For any positive integers $n$ and $r$, $r \leq n$, let $\mathcal{Z}_{n, r} = \{z: \{1, \ldots, n\} \to \{1, \ldots, r\}\}$ be the collection of all possible mappings from $\{1, \ldots, n\}$ to $\{1, \ldots, r\}$.  Given an adjacency matrix $A = (A_{ij}) \in \mathbb{R}^{n \times n}$, any $z \in \mathcal{Z}_{n, r}$ and any $Q = (Q_{ab}) \in \mathbb{R}^{r \times r}$, define the objective function
	\[
		L(Q, z) = \sum_{a, b \in \{1, \ldots, r\}} \sum_{\substack{(i, j) \in z^{-1}(a) \times z^{-1}(b) \\ i \neq j}} (A_{ij} - Q_{ab})^2.
	\]
		For any optimiser of the the objective function 
	\[
		(\widehat{Q}, \hat{z}) \in \argmin_{Q \in \mathbb{R}^{r \times r}, \, z \in \mathcal{Z}_{n, r}} L(Q, z),
	\]
	the estimator is defined as $\widecheck{\Theta} = (\widecheck{\Theta}_{ij})_{i, j = 1}^n \in \mathbb{R}^{n \times n}$, with
	\[
		\widecheck{\Theta}_{ij} = \widecheck{\Theta}_{ji} = \widehat{Q}_{\hat{z}_i \hat{z}_j}, \quad i > j
	\]
	and $\widecheck{\Theta}_{ii} = 0$.  For notational simplicity, we write $\widecheck{\Theta} = \mathrm{NP}(A, r)$.
\end{definition}

The new procedure for change point detection replaces the USVT subroutine in \Cref{alg-online-net} with the estimation detailed in \Cref{def-np-graphon}.  We present the full algorithm below.

\begin{algorithm}[!ht]
	\begin{algorithmic}
		\INPUT $\{A(u), B(u)\}_{u=1, 2, \ldots} \subset \mathbb{R}^{n \times n}$, $\{b_u, u = 2, 3, \ldots\} \subset \mathbb{R}, \alpha \in (0, 1), r_0 \in \mathbb{N}^*$.
		\State $t \leftarrow 1$;
		\State $\mathrm{FLAG} \leftarrow 0$;
		\While{$\mathrm{FLAG} = 0$}  
			\State{$t \leftarrow t + 1$;}
			\State{$J \leftarrow \lfloor \log(t)/\log(2)\rfloor$;}
			\State{$j \leftarrow 0$;}
			\While{$j < J$ and $\mathrm{FLAG} = 0$}
				\State{$s_j \leftarrow t-2^j$;}
				\State{$\widecheck{B}_{s_j, t} \leftarrow \mathrm{NP}(A, r_0)$;}
				\State{$\mathrm{FLAG} = \mathbbm{1} \left\{(\widehat{A}_{t-s_j, t}, \widecheck{B}_{t-s_j, t}/\|\widecheck{B}_{t-s_j, t}\|_{\mathrm{F}}) > b_t\right\}\mathbbm{1} \left\{\|\widecheck{B}_{t-s_j, t}\|_{\mathrm{F}} > C\log^{1/2}(t/\alpha)\right\}$;}			
				\State{$j \leftarrow j + 1$;}
			\EndWhile
		\EndWhile
		\OUTPUT $t$.
		\caption{Network online change point detection - NP-hard} \label{alg-online-net-NP}
	\end{algorithmic}
\end{algorithm} 

\begin{corollary} \label{thm-net-delay-np}
For any $\alpha > 0$, assume that the data $\{A(t), \, B(t)\}_{t = 1, 2, \ldots}$ are two independent sequences of adjacency matrices satisfying \Cref{assump-1-net}.  Let $\widehat{t}$ be the output of \Cref{alg-online-net-NP} with inputs $\{A(t), \, B(t)\}_{t = 1, 2, \ldots}$ and  
	\begin{align*}
		& b_u = C_1\sqrt{\rho\log\left(\frac{u}{\alpha}\right)}.
	\end{align*}
	\begin{itemize}
	\item [(i)] (No change point.) If $\{A(t), \, B(t)\}_{t = 1, 2, \ldots}$in addition satisfy \Cref{assump-2-0-net}, then for any $m \in \mathbb{N}$, 
		\[
			\mathbb{P}\left\{\underset{m \in \mathbb{N}}{\bigcap }   \{ \widehat{t} > m  \}   \right\} > 1 - \alpha.
		\]	
	\item [(ii)] (One change point.) If $\{A(t), \, B(t)\}_{t = 1, 2, \ldots}$ in addition satisfy \Cref{assump-2-net}, the input $r_0 \geq r$, and there exists a large enough absolute constant $C_{\mathrm{SNR}} > 0$ such that
		\[
			\Delta \kappa_0^2 n^2 \rho > C_{\mathrm{SNR}} \{r^2 + n\log(r)\} \log(\Delta/\alpha),
		\]
		then
		\[
			\mathbb{P}\left\{0 < \widehat{t} - \Delta < \frac{C_d \left\{r^2 + n\log(r)\right\}\rho \log(\Delta/\alpha)}{\kappa^2} = \frac{C_d \left\{r^2/n + \log(r)\right\}\log(\Delta/\alpha)}{\kappa_0^2n\rho}\right\} > 1 - \alpha,
		\]
		where $C, C_1, C_d > 0$ are absolute constants.
	\end{itemize}
\end{corollary}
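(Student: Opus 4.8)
The plan is to reuse the proof architecture of \Cref{thm-net-delay-varying-N} essentially verbatim, changing only the estimation-error input: wherever that proof invokes the USVT guarantee (\Cref{lem-usvt-error-control}) I would substitute the corresponding guarantee for the NP-hard estimator of \Cref{def-np-graphon}. Concretely, the entire argument rests on a single high-probability inequality of the form
\[
	\|\widecheck{B}_{s, t} - \widehat{\Theta}_{s, t}\|_{\mathrm{F}}^2 \lesssim \rho\{r^2 + n\log(r)\},
\]
holding simultaneously over all scanned pairs $(s_j, t)$. This sparse, sub-Gaussian analogue of the \cite{gao2015rate} rate is where I expect the real work to be; granting it, the delay and Type-I statements follow by the same bookkeeping as before.

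For part (i) I would argue exactly as in \Cref{thm-net-delay-varying-N}(i). Under \Cref{assump-2-0-net} one has $\widehat{\Theta}_{s, t} = 0$ for every pair, so a false alarm can only be triggered through the inner-product criterion $(\widehat{A}_{s, t}, \widecheck{B}_{s, t}/\|\widecheck{B}_{s, t}\|_{\mathrm{F}}) > b_t$. Since $\widecheck{B}_{s, t}$ is computed from the independent $B$-sequence, conditionally on that sequence the normalised estimator is a fixed unit Frobenius vector $U$, while $\widehat{A}_{s, t}$ is a centred matrix whose entries are sub-Gaussian with variance proxy of order $\rho$; hence $(\widehat{A}_{s, t}, U)$ is sub-Gaussian with proxy of order $\rho\|U\|_{\mathrm{F}}^2 = \rho$. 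With $b_u = C_1\sqrt{\rho\log(u/\alpha)}$, a Chernoff bound together with the geometric-grid union bound over $\{(s_j, t): s_j \in \mathcal{S}(t), \, t \geq 2\}$ controls the total false-alarm probability by $\alpha$; this is precisely the content of \Cref{lem-s2} and uses nothing about the estimator's rate, so it transfers unchanged.

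For part (ii) I would set the target delay $d = C_d\rho\{r^2 + n\log(r)\}\log(\Delta/\alpha)/\kappa^2$ and consider the scan at $t = \Delta + d$ together with the grid point $s \in \mathcal{S}(t)$ nearest $\Delta$, which the geometric grid supplies up to a constant factor. A direct computation of the population CUSUM gives $\widehat{\Theta}_{\Delta, t} = \sqrt{\Delta(t-\Delta)/t}\,(\Theta_1 - \Theta_2)$, whence $\|\widehat{\Theta}_{s, t}\|_{\mathrm{F}} \asymp \sqrt{d}\,\kappa$ in the regime $d \ll \Delta$ forced by the SNR condition (which, after substituting $\kappa = \kappa_0 n\rho$, is exactly $\Delta > (C_{\mathrm{SNR}}/C_d)\,d$ and so guarantees a valid $s$). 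I would then lower-bound the detection statistic through the split
\[
	(\widehat{A}_{s, t}, \widecheck{B}_{s, t}/\|\widecheck{B}_{s, t}\|_{\mathrm{F}}) = (\widehat{\Theta}_{s, t}, \widecheck{B}_{s, t}/\|\widecheck{B}_{s, t}\|_{\mathrm{F}}) + (\widehat{A}_{s, t} - \widehat{\Theta}_{s, t}, \widecheck{B}_{s, t}/\|\widecheck{B}_{s, t}\|_{\mathrm{F}}),
\]
bounding the second (noise) term as in part (i). For the first (signal) term, the estimation inequality places $\widecheck{B}_{s, t}$ within $\sqrt{\rho\{r^2 + n\log r\}}$ of $\widehat{\Theta}_{s, t}$, which is $\ll \|\widehat{\Theta}_{s, t}\|_{\mathrm{F}}$ under the delay scaling; hence the normalised direction is nearly aligned with $\widehat{\Theta}_{s, t}$ and the signal term is at least $\|\widehat{\Theta}_{s, t}\|_{\mathrm{F}} - 2\|\widecheck{B}_{s, t} - \widehat{\Theta}_{s, t}\|_{\mathrm{F}} \gtrsim \sqrt{d}\,\kappa$. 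The same estimate gives $\|\widecheck{B}_{s, t}\|_{\mathrm{F}} \gtrsim \sqrt{d}\,\kappa$, clearing the norm threshold $C\log^{1/2}(t/\alpha)$. Requiring $\sqrt{d}\,\kappa$ to dominate both that threshold and the inner-product noise $b_t \asymp \sqrt{\rho\log(\Delta/\alpha)}$ yields $d \gtrsim \rho\{r^2 + n\log r\}\log(\Delta/\alpha)/\kappa^2$, i.e.\ the advertised delay $\{r^2/n + \log r\}\log(\Delta/\alpha)/(\kappa_0^2 n\rho)$.

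The main obstacle is the estimation bound itself. The estimator of \Cref{def-np-graphon} is the constrained least-squares/block-averaging procedure of \cite{gao2015rate}, analysed there for a single Bernoulli network, whereas here it is applied to the CUSUM matrix $\widehat{B}_{s, t}$, which is a weighted difference of Bernoulli sums, hence sub-Gaussian rather than Bernoulli with a $\rho$-dependent noise scale. I would therefore re-run their oracle inequality: the bias vanishes because $\widehat{\Theta}_{s, t}$ inherits the rank-$r$ block structure of $\Theta_1 - \Theta_2$ exactly when $r_0 \geq r$, and the stochastic term is a supremum of a sub-Gaussian empirical process indexed by the $r^n$ candidate memberships in $\mathcal{Z}_{n, r}$ and their associated $r \times r$ block-mean matrices, producing the $n\log(r)$ entropy contribution and the $r^2$ parameter-count contribution, now carrying the proxy $\rho$ in place of the Bernoulli variance. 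Making this uniform over all scanned $(s_j, t)$ via the same geometric-grid union bound used in part (i) then closes the argument.
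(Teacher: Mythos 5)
Your proposal is correct and is essentially the paper's own proof: the paper disposes of \Cref{thm-net-delay-np} in two sentences, saying the argument is almost identical to that of \Cref{thm-net-delay-varying-N} except that the high-probability events controlling the USVT estimator (\Cref{lem-usvt-error-control}) are replaced by Theorem~2.1 of \cite{gao2015rate}, re-derived so that the bound carries the sparsity level $\rho$ (their Lemmas~4.1--4.3 must be adjusted) --- exactly the substitution, and the sparse sub-Gaussian re-derivation of the $\rho\{r^2 + n\log(r)\}$ rate, that you identify as the real work. The one place you go beyond the paper is in observing that part (i) must now be routed through the inner-product criterion and \Cref{lem-s2}, since the least-squares estimator, unlike the thresholded USVT output, is not exactly zero under the null; the paper's terse proof leaves this adaptation implicit, and your handling of it is the natural one.
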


\Cref{thm-net-delay-np} provided us with an upper bound on the detection delay matching the lower bound in \Cref{thm-lb}, saving for logarithmic factors.  However, the detection delay  in \Cref{thm-net-delay-np} is based on an NP-hard procedure in \Cref{alg-online-net-NP}, which has limited practical value.  

Comparing the results in \Cref{thm-net-delay-varying-N} and \Cref{thm-net-delay-np}, we see that in the very extreme regimes, i.e.~$r \asymp 1$ or $r \asymp n$, the detection delays obtained by the proposed  polynomial-time and NP-hard   algorithms achieve the same rates.  Both  estimators are nearly optimal, saving for logarithmic factors.  Between the two extreme cases $r \asymp 1$ and $r \asymp n$, the NP-hard algorithm achieves sharper rates than the  polynomial time algorithm.  This phenomenon is inline with the computational and statistical tradeoffs observed in other high-dimensional statistical problems, e.g.~\cite{zhang2012communication}, \cite{loh2013regularized}, to name but a few.

\subsection{Comparisons with existing work}\label{sec-comp}

With all the theoretical results at hand, we are ready to provide some in-depth comparisons with existing work.  Since we believe that our paper is the first ever providing theoretical results for network (in the sense of random matrices) online change point detection problems, the four papers we select in this subsection are all concerned with different but related problems.

\cite{chen2019sequential} establishes a general framework for online change point detection.  Provided a suitable notion of distance, a $k$-nearest-neighbour-based test statistic is used for testing the existence of the change points in a sequential manner. In  \Cref{sec-numeric}, we consider three different statistics such that  the methods from \cite{chen2019sequential} can be used as our competitors.  As for the theoretical results, \cite{chen2019sequential} focused on the average run length.  In our paper, we provide a range of results including detection delay, average run length and minimax lower bounds.  

In statistics literature, the term ``network'' sometimes refers to the precision matrices in Gaussian graphical models, which are different from what we study in this paper. \cite{keshavarz2018sequential} and \cite{keshavarz2020online} studied online change point detection in Gaussian graphical models.  In addition to the model differences, both \cite{keshavarz2018sequential} and \cite{keshavarz2020online} focused on the limiting distributions of the test statistics under the null and alternative distributions.  We conjecture that a detection delay might be obtainable based on the results thereof, but the results are not explicit yet.  On the other hand, in our paper, especially based on \Cref{thm-net-delay-varying-N}, it is straightforward that the overall probabilities of falsely detecting change points or missing change points are both upper bounded by $\alpha$. 

\cite{wang2018optimal} investigated an offline network change point detection problem, where a sequence of independent adjacency matrices are collected and change point estimators are sought retrospectively.  Despite the difference, there are some interesting comparisons, which to some extent, reflect the connections between online and offline change point detection problems.  
	\begin{itemize}
	\item [(1)]	The detection delay in the online setting can be seen as the counterpart of the localisation error in the offline setting.  The minimax lower bound on the localisation error in \cite{wang2018optimal} is of order $(\kappa_0^2n^2\rho)^{-1}$, while the minimax lower bound on the detection delay in this paper is of order $\log(1/\alpha)(\kappa_0^2 n\rho)^{-1}(r^2/n + 1)$.  The extra $\log(1/\alpha)$ term is rooted in the fact that we need to control the Type-I error in online settings.  The other differences are more interesting -- obviously, the offline rate is better than the online rate.   This is because, the \emph{de facto} smallest sample size for a certain distribution in the offline scenario is $\Delta$, while in the online scenario it is $\min\{\Delta, \, \mbox{detection delay}\}$.
	\item [(2)] In both online and offline settings, we have seen a computational and statistical tradeoff.  Comparing \Cref{thm-net-delay-varying-N} and \Cref{thm-net-delay-np}, we see that NP-hard estimators can detect change points under a weaker condition and provide a smaller detection delay.  In the offline setting, as \cite{wang2018optimal} has conjectured, by replacing the USVT estimator with the NP-hard estimator in \Cref{def-np-graphon}, one can achieve a nearly optimal localisation error under a weaker condition, than the one needed by the USVT estimator.
	\end{itemize}

\section{Numerical experiments}\label{sec-numeric}

\subsection{Simulation studies}
\label{sec:sim}

Recall that in \Cref{sec-methods}, we proposed a network online change point detection method in \Cref{alg-online-net}, with a subroutine in \Cref{alg-usvt} and a variant in \Cref{alg-online-net-variant}.  In this section, we will investigate the numerical performances of our proposed methods.  Since there is no direct competitor available, we will tailor the $k$-nearest neighbours type method proposed in \cite{chen2019sequential}.

In order to make a fair comparison with  \cite{chen2019sequential} we consider its  three different statistics, including: the ``original'' (ORI) which specifies the original edge-count scan statistic, the  weighted edge-count scan statistic (W) and the generalised edge-count scan statistic (G).  The  statistics are computed with internal functions in the R \citep{R} package gStream \citep{gstream}.  

 We consider two different  forms of calibration.  The first is based on the probability of raising a false alarm.  Using 200 Monte Carlo simulations and values of $\alpha \in \{0.01, 0.05\}$, we choose the detection thresholds such that, the probability of raising a false alarm in the interval  $[1, T_{\mathrm{train}}]$ is $\alpha$.  The values of $T_{\mathrm{train}}$ are taken from the set $\{150, 200\}$.  The second one is based on  the average run length $\gamma$.  We consider  values of $\gamma$ in the set $\{150, 200\}$ and  calibrate the  thresholds of the competing methods,  based on 200 Monte Carlo simulations, to have average run length approximately $\gamma$ under the pre-change model.  For the data splitting  required  by Algorithms \ref{alg-online-net} and \ref{alg-online-net-variant}, in all of our experiments, the sequence of adjacency matrices $\{A(u)\}$ consists of the odd indices of the original sequence, and the sequence $\{B(u)\}$ of the even ones. 

To evaluate  the performance of different methods, we proceed as follows.  For each generative model described below, we run $N= 100$ Monte Carlo simulations, where in each trial the data are collected in the interval $[1, T]$, $T= 300$. The change point $\Delta$ occurs at the time point $150$. Each  method provides an estimator $\widehat{t}$, which can be $\infty$ if no change points are detected in $[1, T]$. We define $\widetilde{t} =  \min\{T,\widehat{t}\}$ and   compute the average detection delay
 \[
 \displaystyle \text{Delay}\,=\,   \frac{ \sum_{j=1}^{N}    1_{   \{  \widetilde{t} \geq \Delta    \} }       (   \widetilde{t} -\Delta  )    }{    \sum_{j=1}^{N}    1_{   \{  \widetilde{t} \geq \Delta    \} }   }.
 \]
 We also report the proportion of false alarms
 \[
  \displaystyle \text{PFA}\,=\,   \frac{ \sum_{j=1}^{N}    1_{   \{  \widetilde{t} < \Delta    \} }        }{   N }.
 \]

As for \Cref{alg-online-net}, guided by \Cref{thm-net-delay-varying-N}, we set
\[
\tau_{1, s, u} = 0.2\sqrt{n\hat{\rho}} + \frac{1}{15}\sqrt{2\log\left(\frac{2(u-s)(u-s+1)}{\alpha}\right)} \quad \mbox{and} \quad \tau_{2, s, u} = \sqrt{\frac{(u-s)s}{u}} \hat{\rho},
\]
where $\hat{\rho}$ is an estimator of  $\rho$, calculated as the $0.95$-quantile of the quantities
\begin{equation}\label{eq-rho-sim}
\hat{p}_{i,j} =  \sum_{t=1}^T   A_{ij}(t),\quad i,j \in \{1,\ldots,n\}, \,\,i<j,
\end{equation}
where   the matrices $\{A(t)\}_t^{T}$ are part of the training data.   In addition, we set 
\[
b_u =    C_1\sqrt{\hat{\rho}\log\left(\frac{u}{\alpha}\right)},  
\]
with $C_1$ tuned  to give the desired false alarm rate.

With respect to \Cref{alg-online-net-variant}, guided by \Cref{thm-selection}, we let
	\[
		\tau_{1, s, u} = 0.2 \sqrt{n \hat{\rho}}  +  \frac{\sqrt{2\log(  2\gamma+2    )}}{15} \quad \mbox{and} \quad \tau_{2,s, u} = \sqrt{\frac{(u-s)s}{u}} \hat{\rho},
	\]
	where $\hat{\rho}$ is the $0.95$-quantile of the quantities
 in \eqref{eq-rho-sim}.  In addition, we let
	\[
		b_u = C_1 \sqrt{\hat{\rho}\log( \gamma)}, 
	\]
	with the constant $C_1$  calibrated to such that before the change point the expect time of the first false alarm is $\gamma$.

We consider four different settings.

\paragraph{Scenario 1.} This consists of  a stochastic block model with 3 communities of sizes $\floor{n/3}$, $\floor{n/3}$ and $n- 2\floor{n/3}$.  The network size $n$ takes values in $\{100, 150\}$.  Denoting by  $z_i$  the label of the community associated with node  $i \in \{1, \ldots, n\}$,  the data are generated as 
	\[
		A_{ij} (t) \stackrel{\mbox{ind.}}{\sim} \text{Bernoulli}(\rho B_{z_iz_j}(t)),
	\]
	where $\rho =0.02$ and the matrices $B(t)$ satisfy 
	\[
		B(t) \,=\, \left( \begin{array}{ccc} 
		0.6 & 1.0&  0.6\\
		1.0  &0.6  &0.5\\
		0.6  &0.5  &0.6\\
		\end{array} \right), \quad t \in \{1, \ldots, \Delta\}
	\]
	and
	\[
		B(t) \,=\, \left( \begin{array}{ccc} 
		0.6  &0.5  &0.6\\
		0.5  &0.6&  1.0\\
		0.6 & 1.0  &0.6\\
		\end{array} \right), \quad t \in \{\Delta + 1, \Delta + 2, \ldots, T\}.
	\]

\paragraph{Scenario 2.}  This is also a stochastic block model. We now take the number of communities to be $5$ and the number nodes in each community to be $n/5$ where $n \in \{100, 150\}$.  Again we set $\rho=0.02$ but let 
\[
B(t) \,=\, \left( \begin{array}{ccccc} 
0.9  &0.2  &0.2  &0.2  &0.2\\
0.2  & 0.9 & 0.2 & 0.2 & 0.2\\
0.2&  0.2 & 0.9 & 0.2&  0.2\\
0.2 & 0.2  &0.2  &0.9 & 0.2\\
0.2 & 0.2 & 0.2&  0.2 & 0.9\\
\end{array} \right), \quad t \in \{1, \ldots, \Delta\}
\]
and
\[
B(t) \,=\, \left( \begin{array}{ccccc} 
0.5 & 0.1 & 0.1  &0.1 & 0.1\\
0.1 & 0.5 & 0.1  &0.1 & 0.1\\
0.1 & 0.1  &0.5  &0.1  &0.1\\
0.1  &0.1 & 0.1  &0.5 & 0.1\\
0.1 & 0.1 & 0.1&  0.1 & 0.5\\
\end{array} \right), \quad t \in \{\Delta + 1, \Delta + 2, \ldots, T\}.
\]


\paragraph{Scenario 3.}  We consider a degree corrected block model \citep{karrer2011stochastic} with  3 communities of sizes $\floor{n/3}$, $\floor{n/3}$ and $n- 2\floor{n/3}$, where $n \in \{100, 150\}$.  Let $z_i$ be the community  to which  node $i$ belongs, and  
	define  $v_i =   \sqrt{i/n}$.  The data are then  generated  as
	\[
		A_{ij}(t)  \sim \text{Bernoulli}(  v_i v_j  B_{z_iz_j}(t)  ), 
	\]
where 
\[
B(t) \,=\, \left( \begin{array}{ccc} 
0.9  &0.1  &0.1\\
0.1  &0.9 & 0.1\\
0.1  &0.1&  0.9\\
\end{array} \right), \quad t \in \{1, \ldots, \Delta\}
\]
and
\[
B(t) \,=\, \left( \begin{array}{ccc} 
0.95 &0.15  &0.15\\
0.15 &0.95& 0.15\\
0.15  &0.15&  0.95\\
\end{array} \right), \quad t \in \{\Delta + 1, \Delta + 2, \ldots, T\}.
\]

\begin{figure}[pb!]
	\begin{center}
		\includegraphics[width=0.24\textwidth]{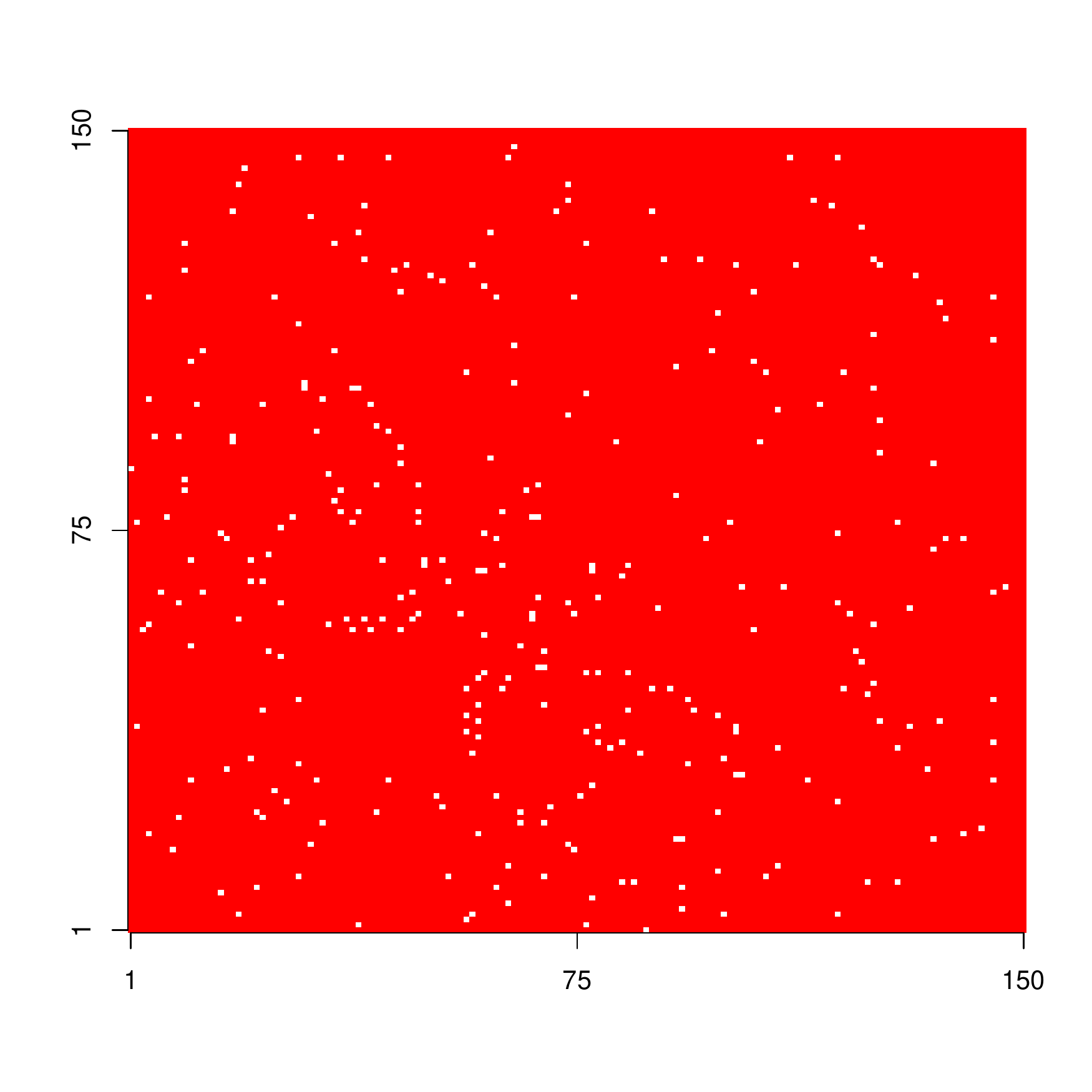} 
		\includegraphics[width=0.24\textwidth]{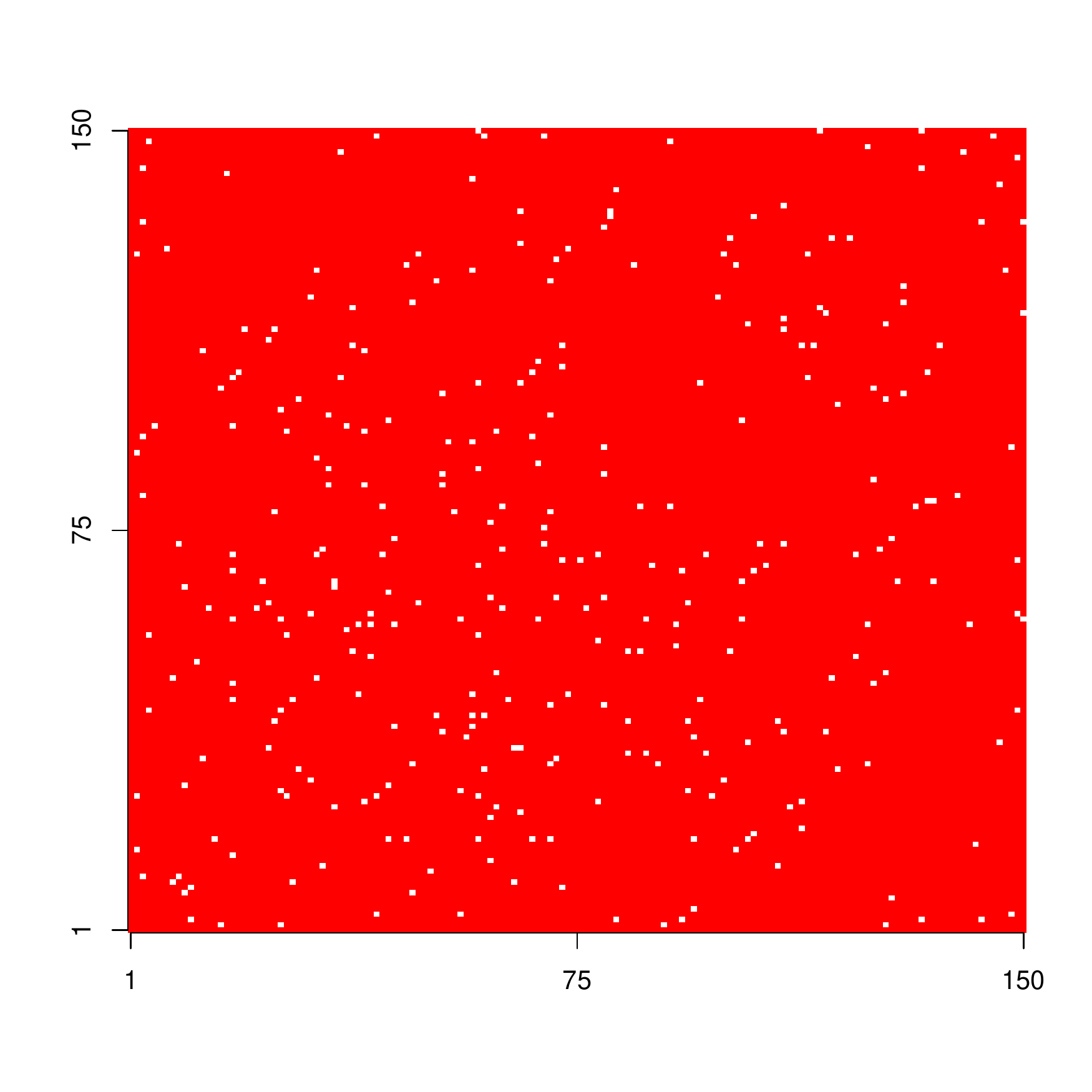}
		\includegraphics[width=0.24\textwidth]{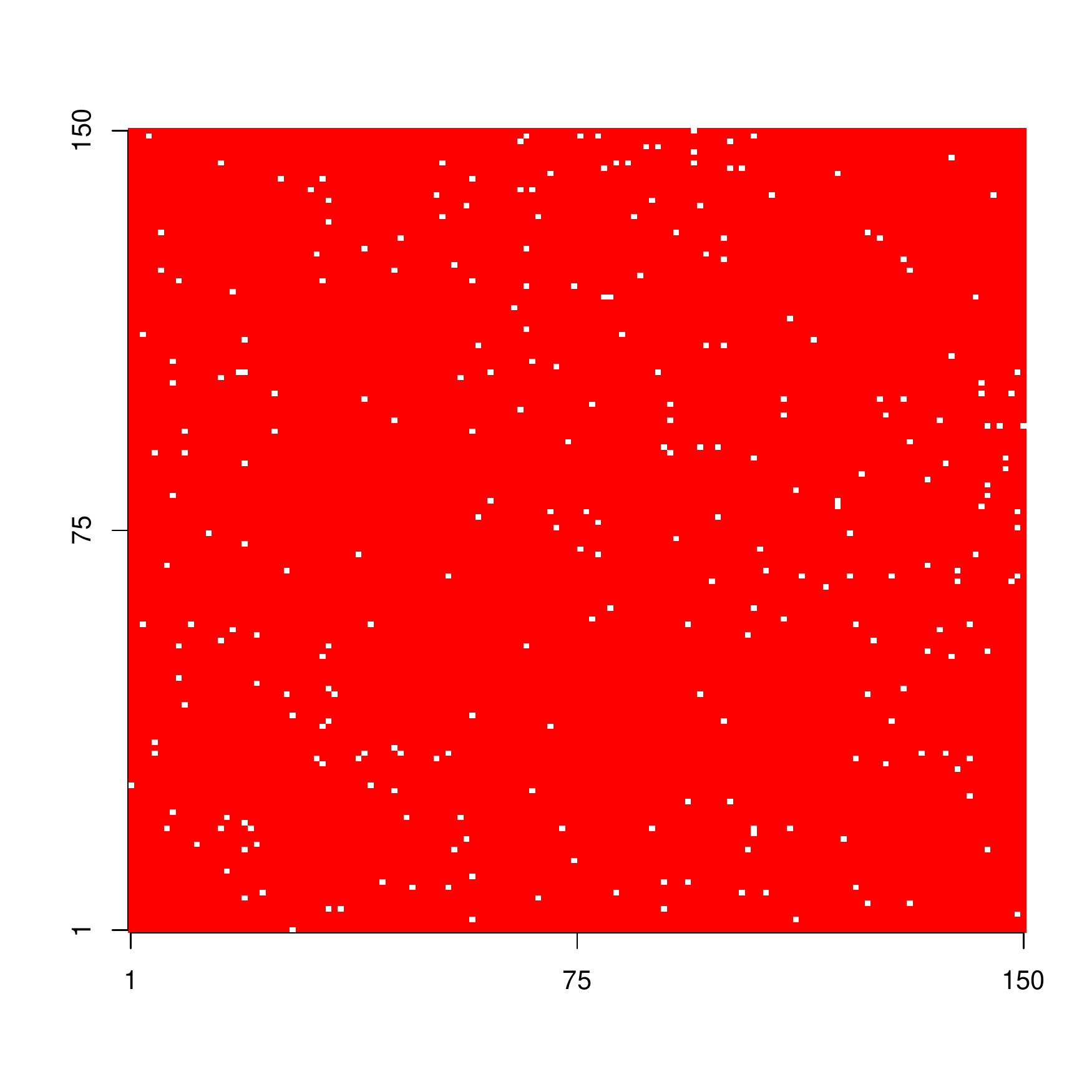}
		\includegraphics[width=0.24\textwidth]{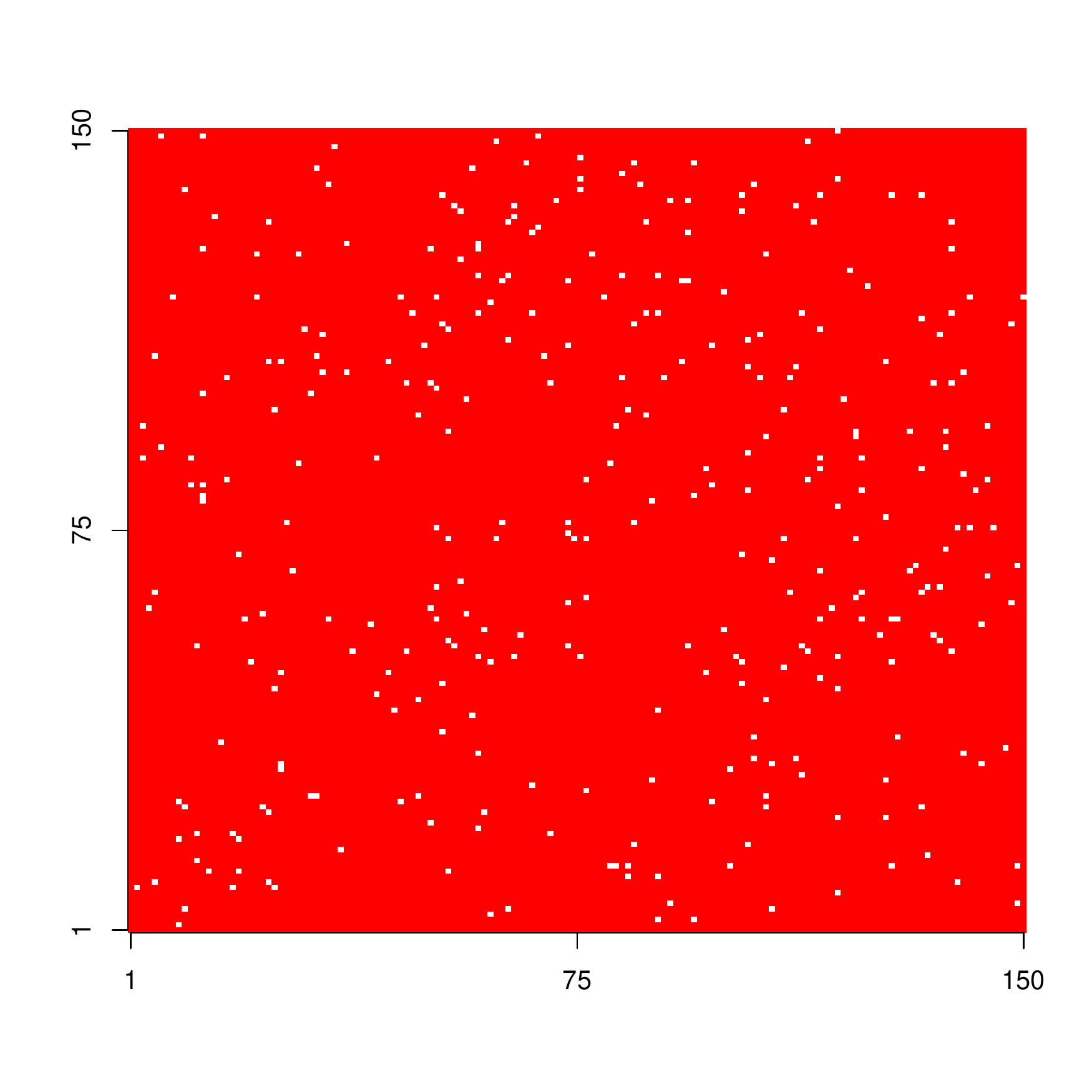}
		\includegraphics[width=0.24\textwidth]{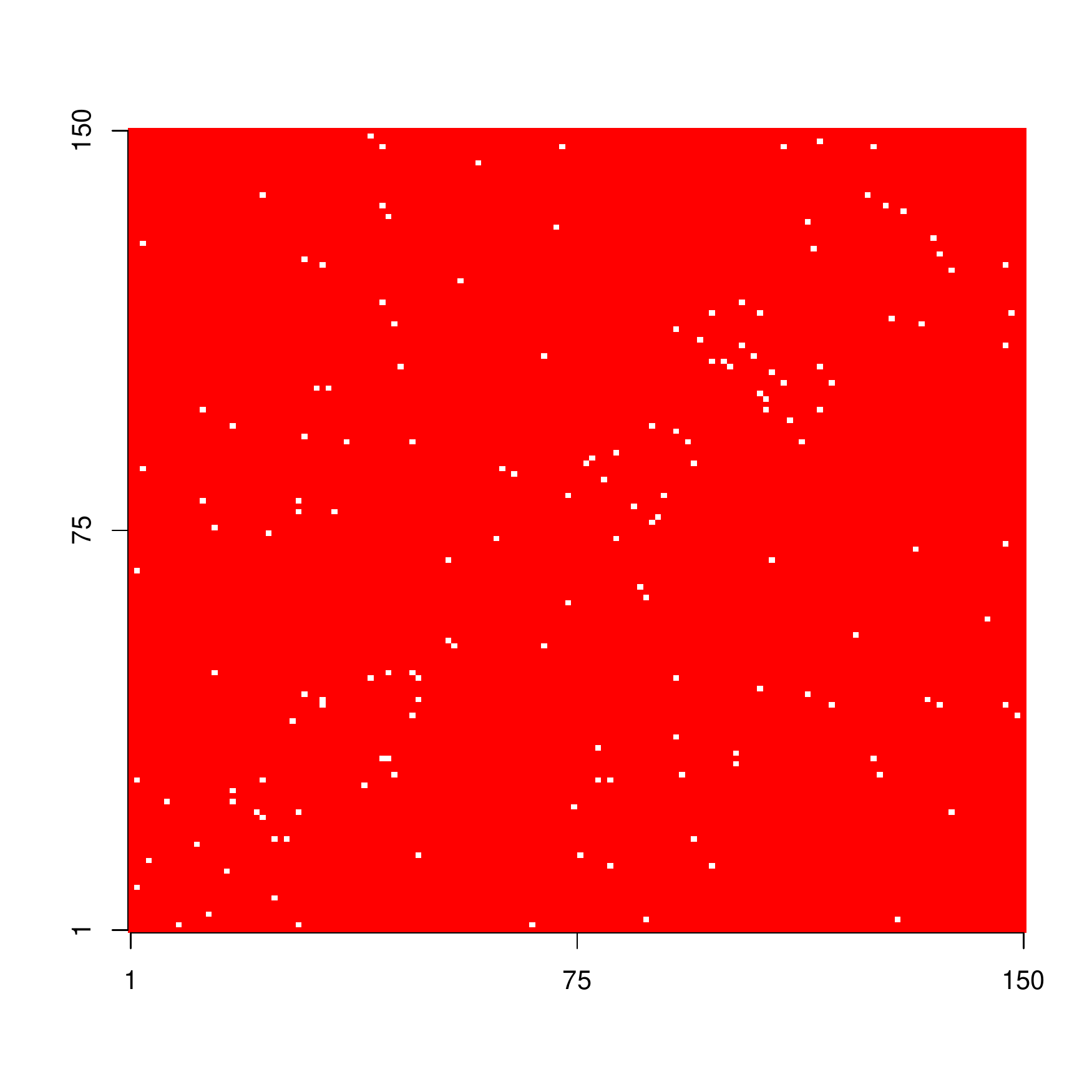} 
		\includegraphics[width=0.24\textwidth]{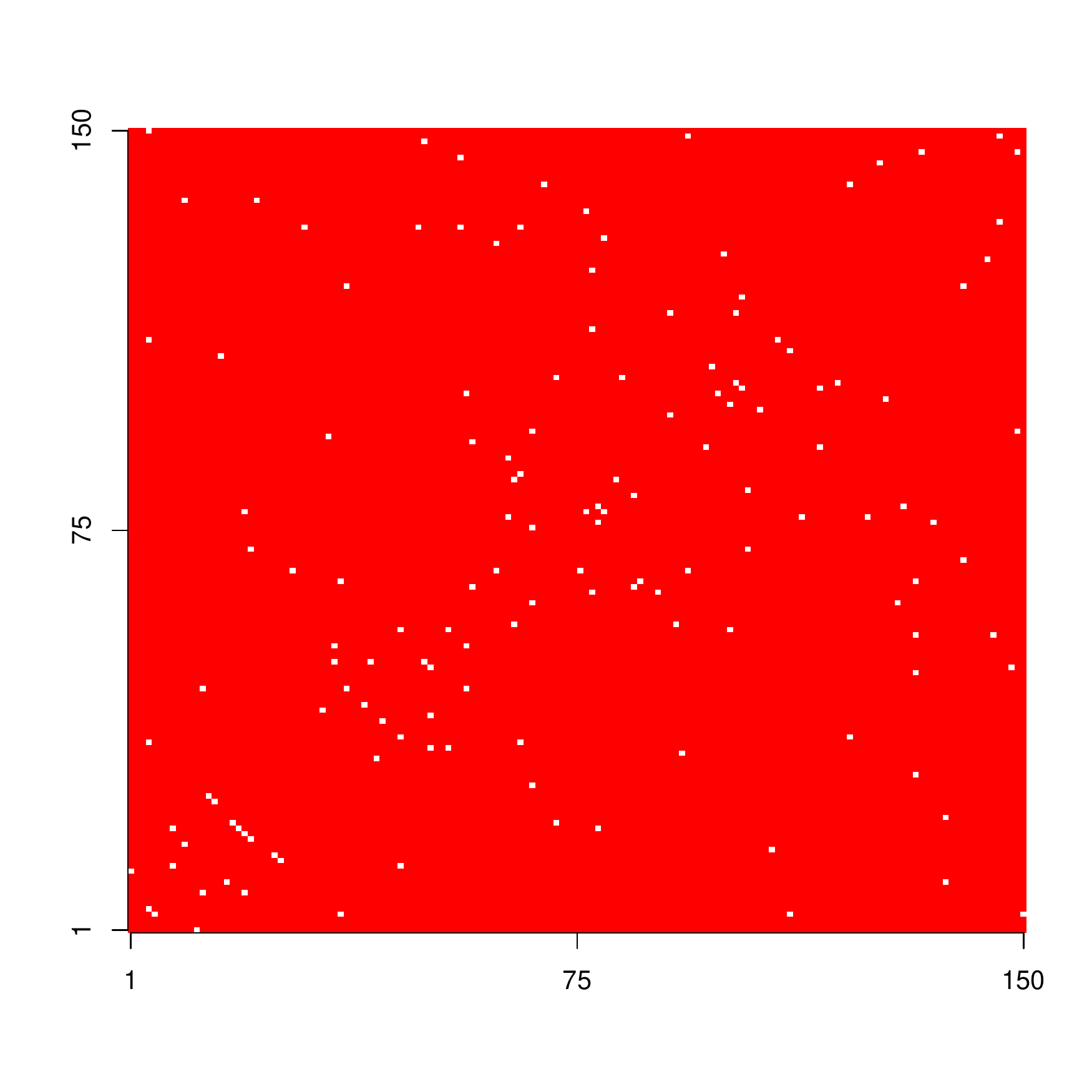}
		\includegraphics[width=0.24\textwidth]{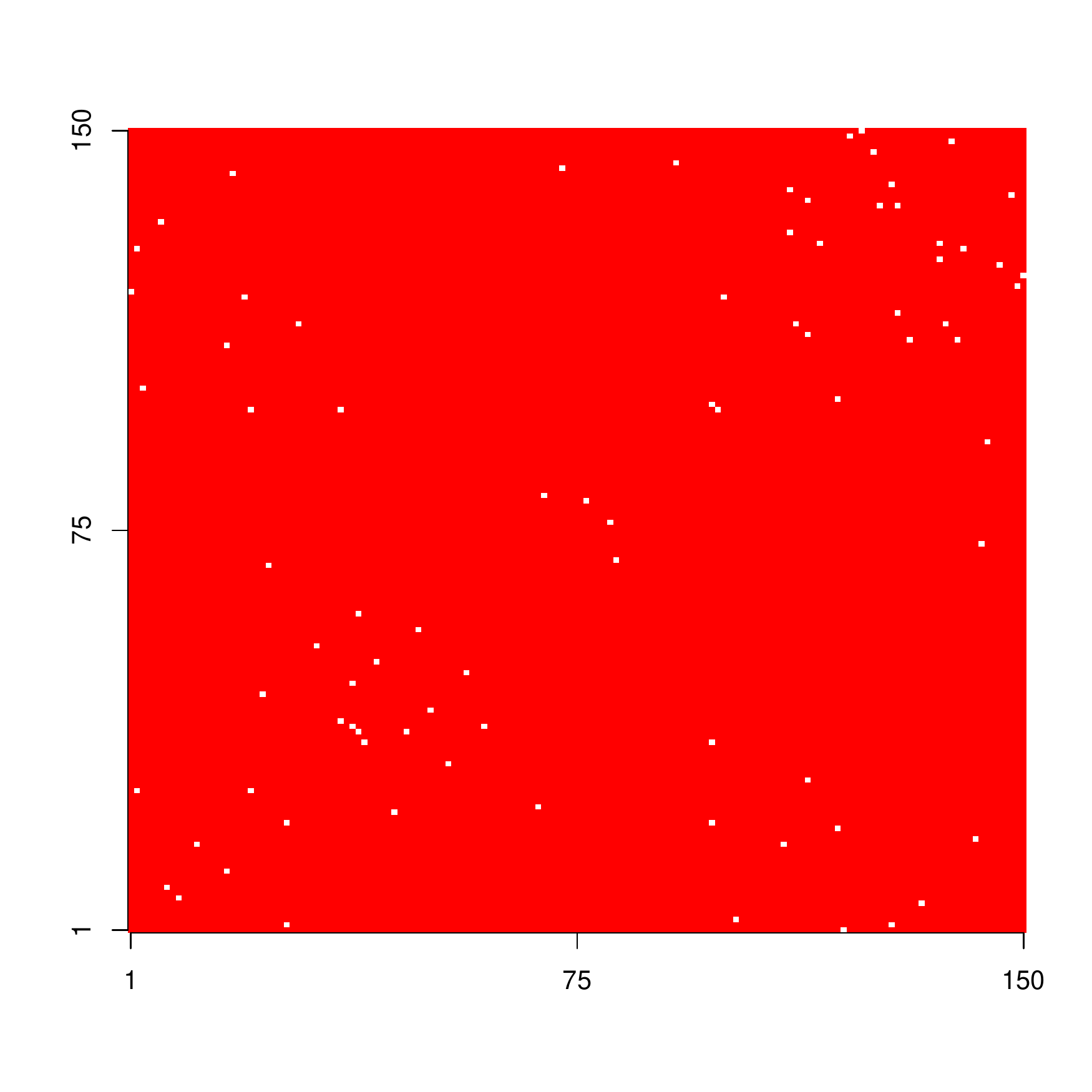}
		\includegraphics[width=0.24\textwidth]{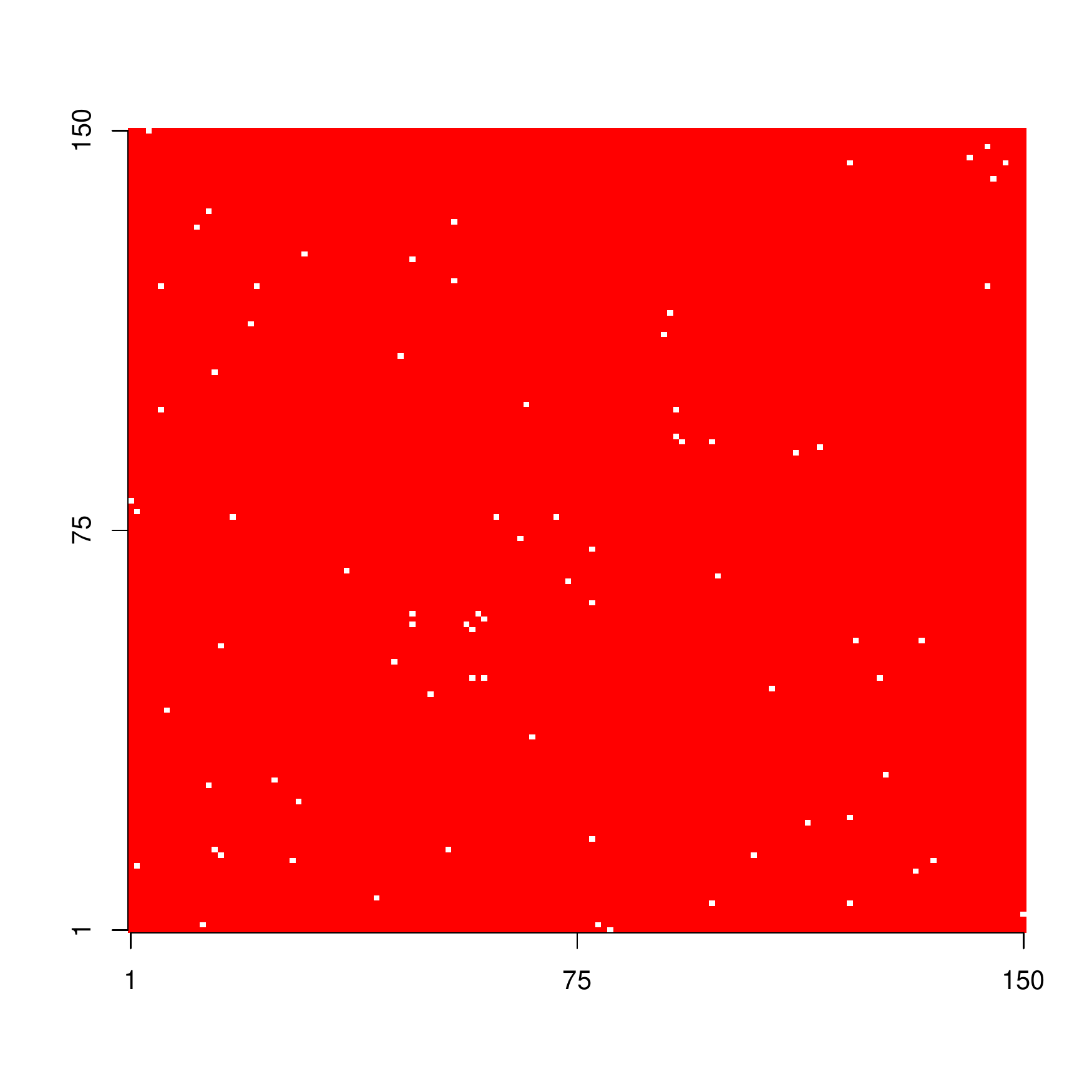} 
		\includegraphics[width=0.24\textwidth]{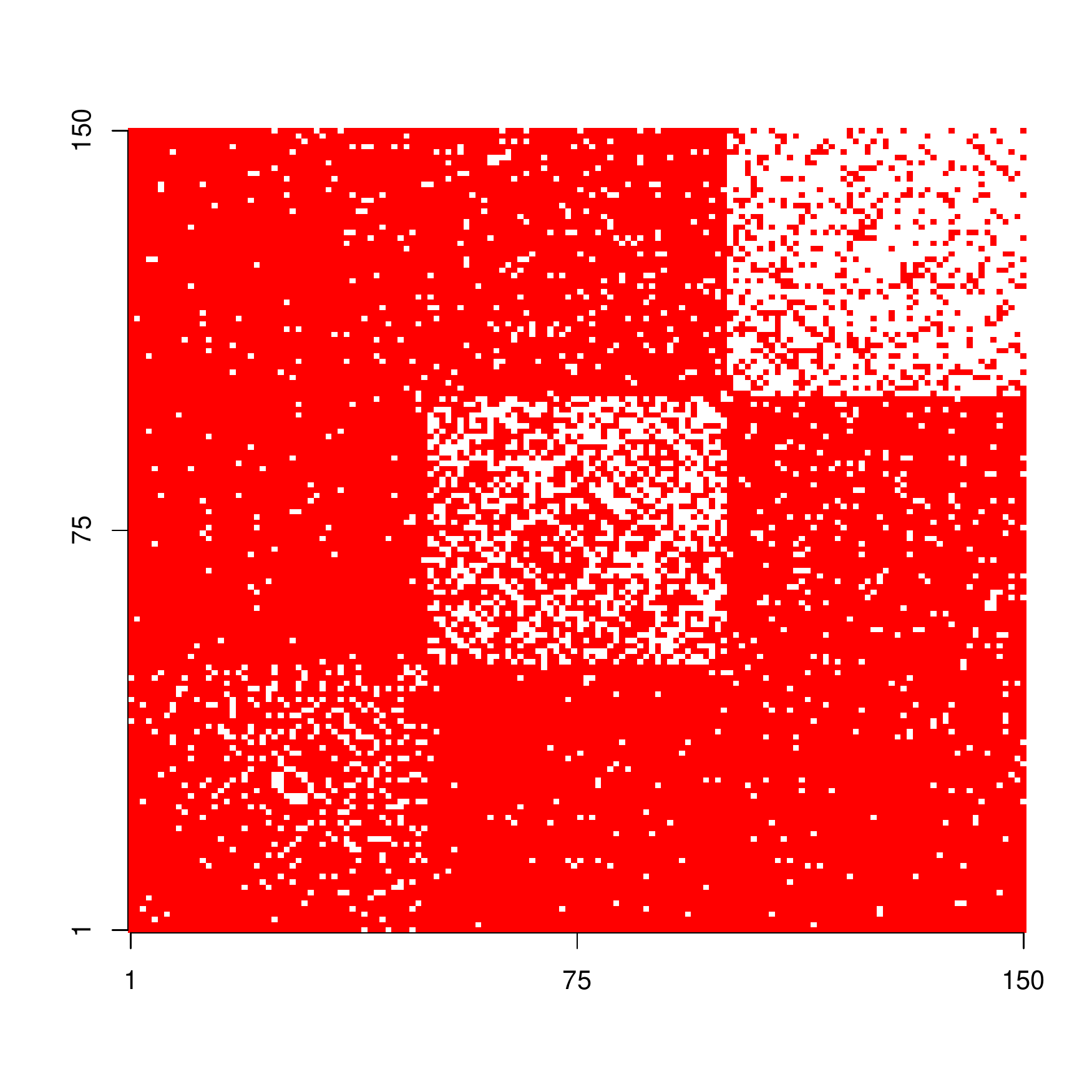} 
		\includegraphics[width=0.24\textwidth]{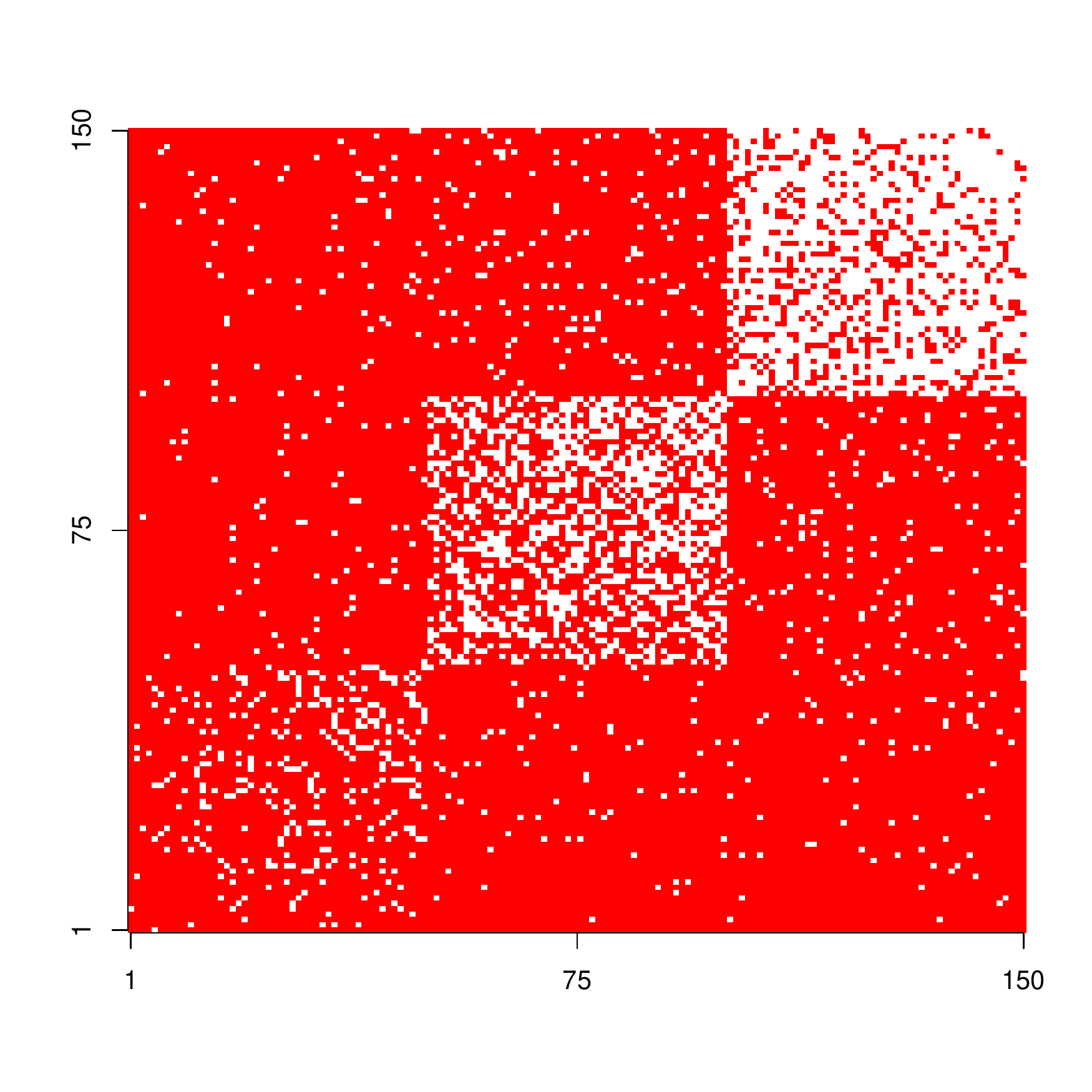}
		\includegraphics[width=0.24\textwidth]{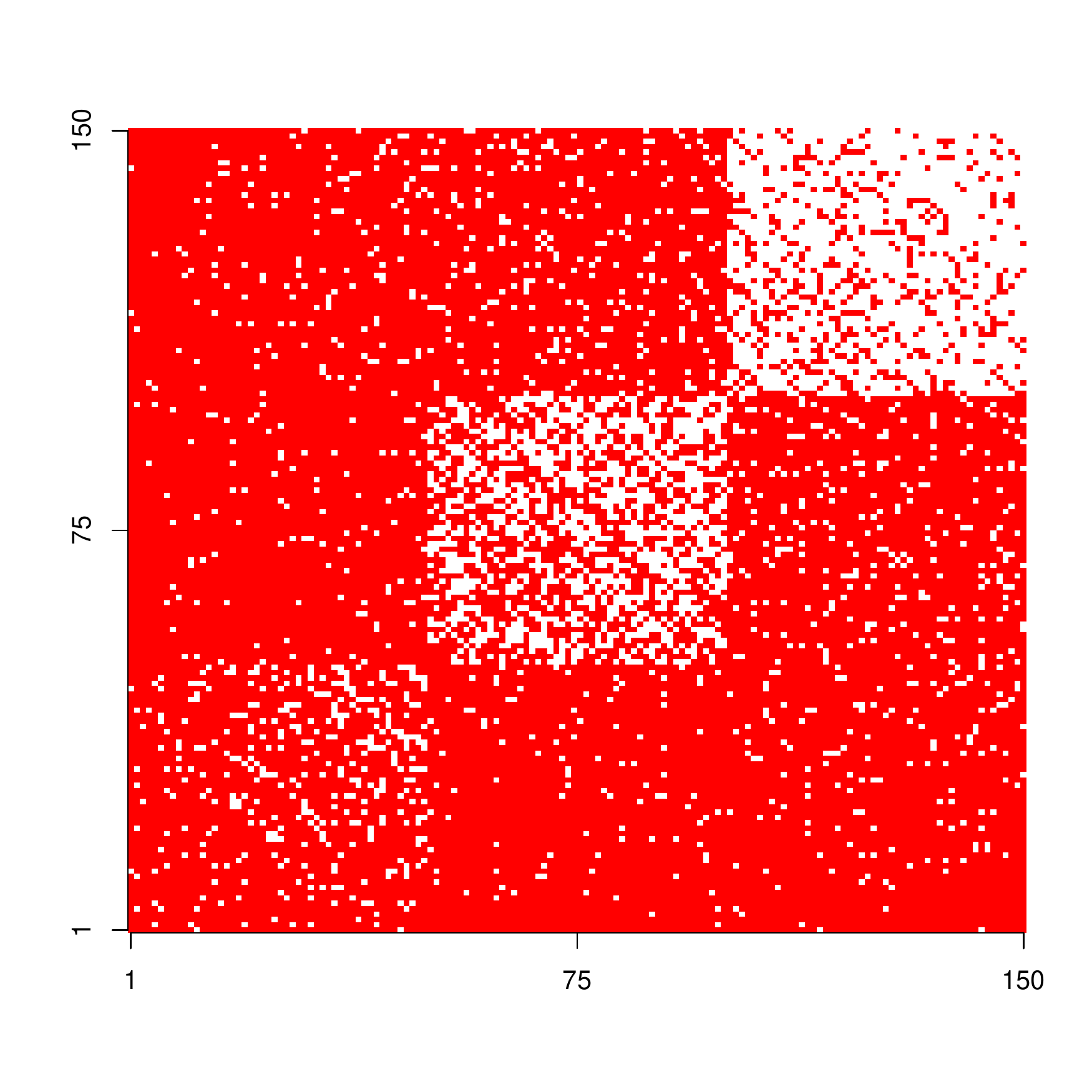}
		\includegraphics[width=0.24\textwidth]{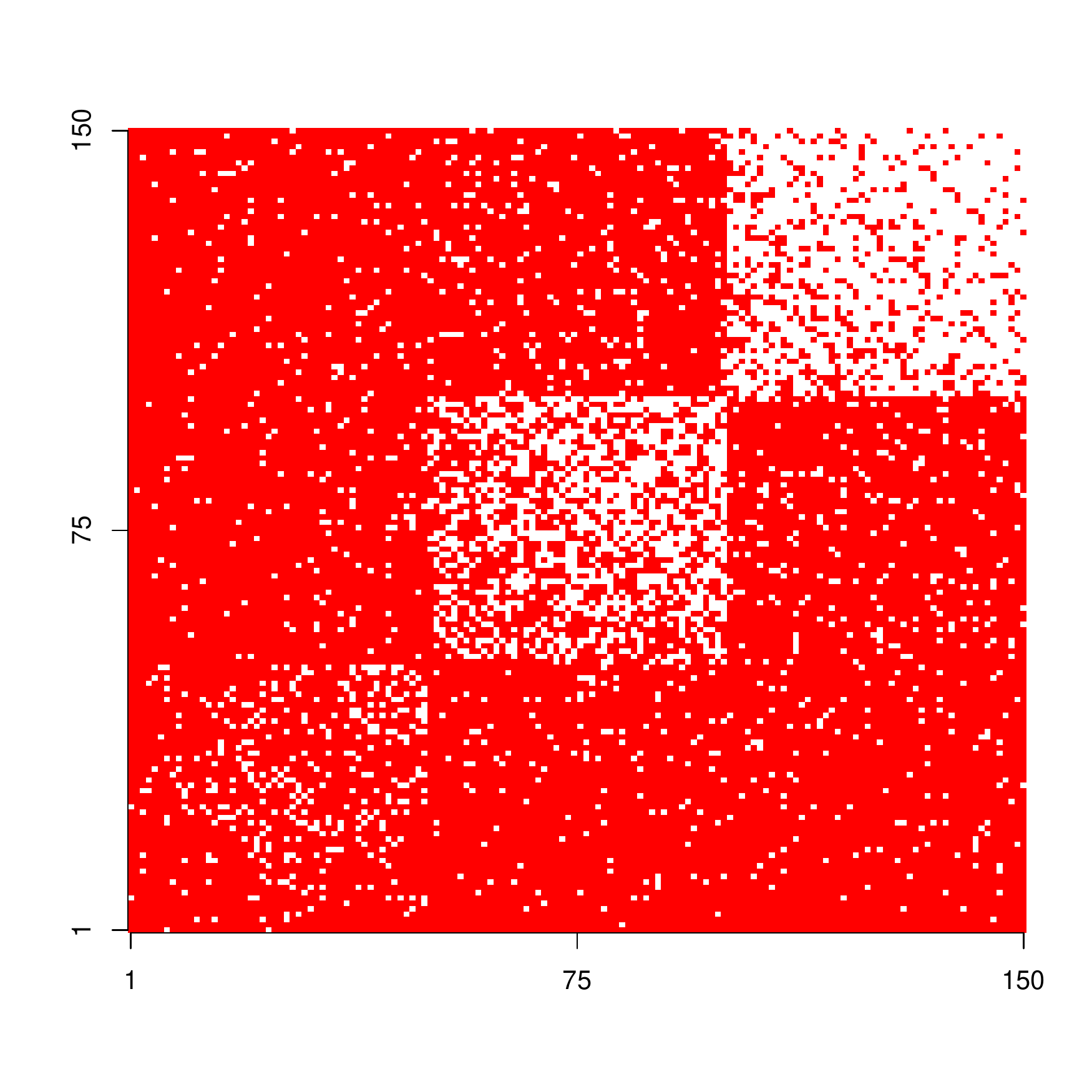} 	
		\includegraphics[width=0.24\textwidth]{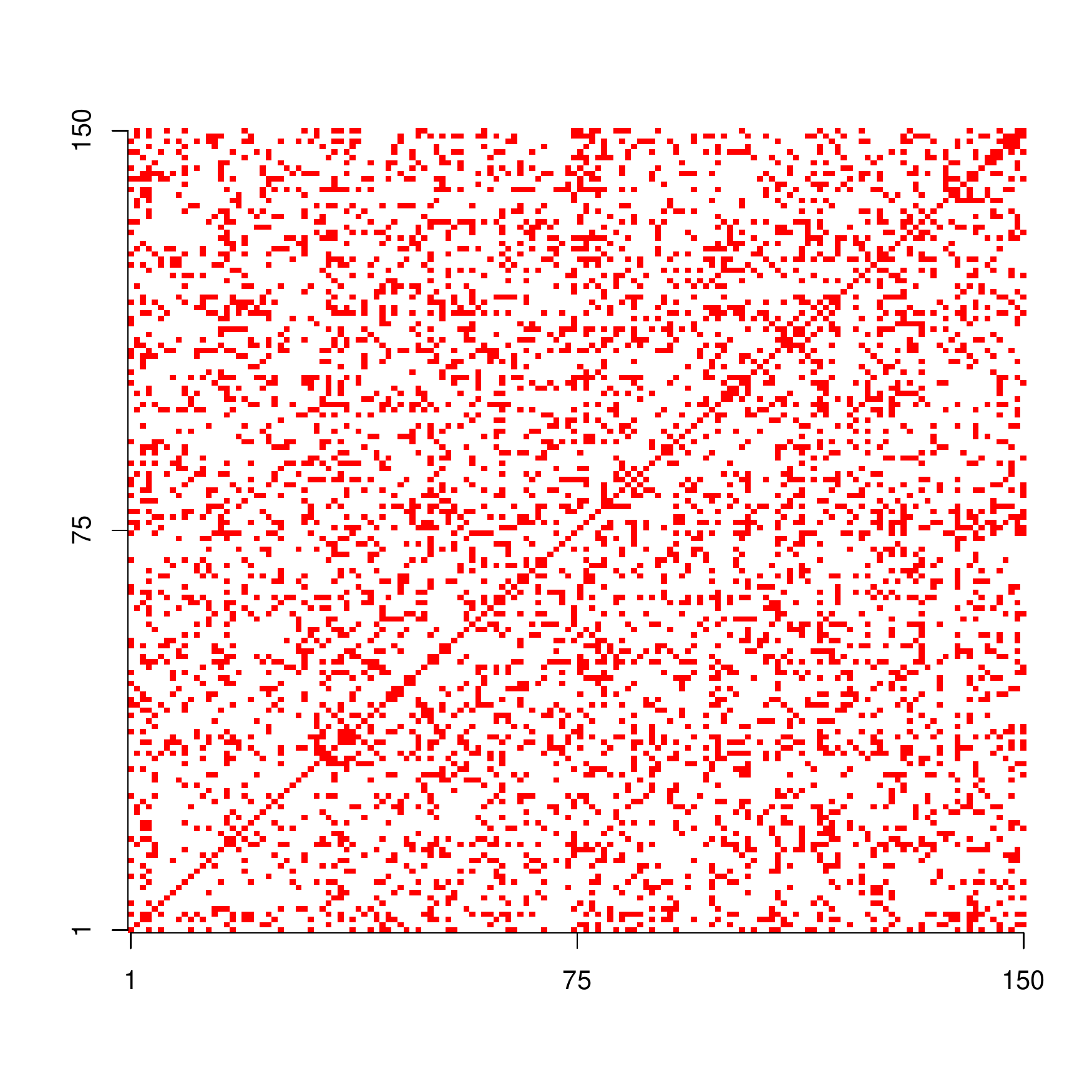} 
		\includegraphics[width=0.24\textwidth]{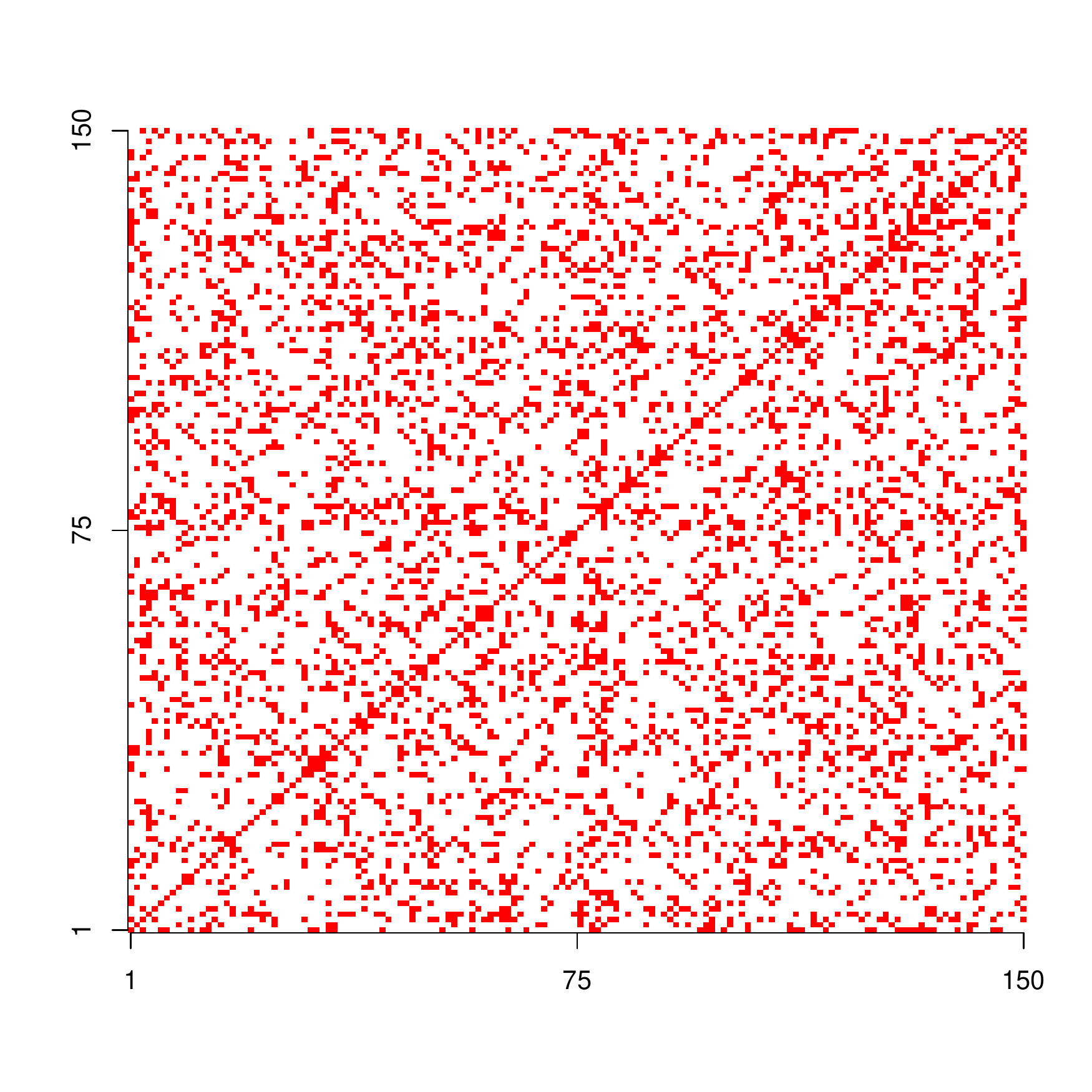}
		\includegraphics[width=0.24\textwidth]{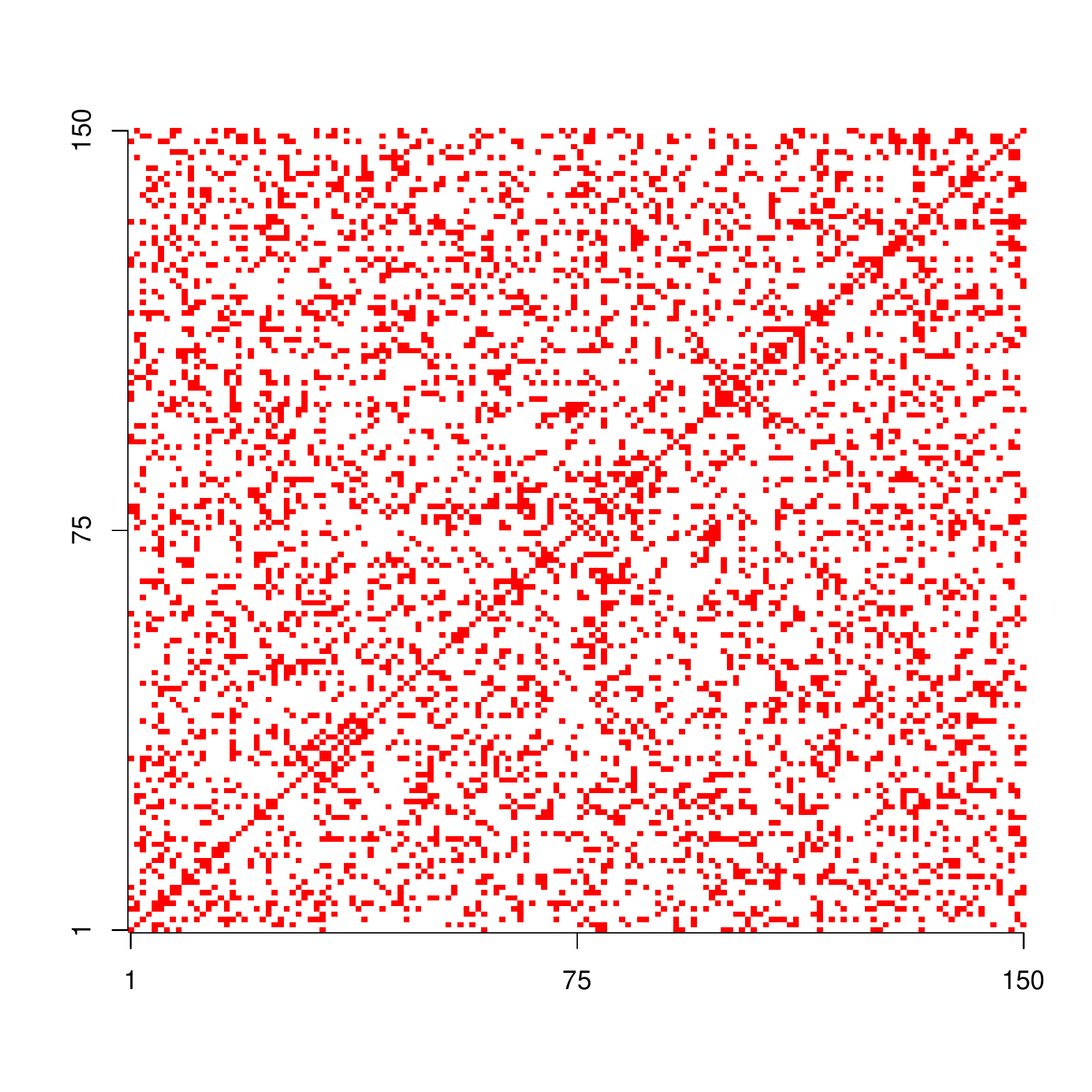}
		\includegraphics[width=0.24\textwidth]{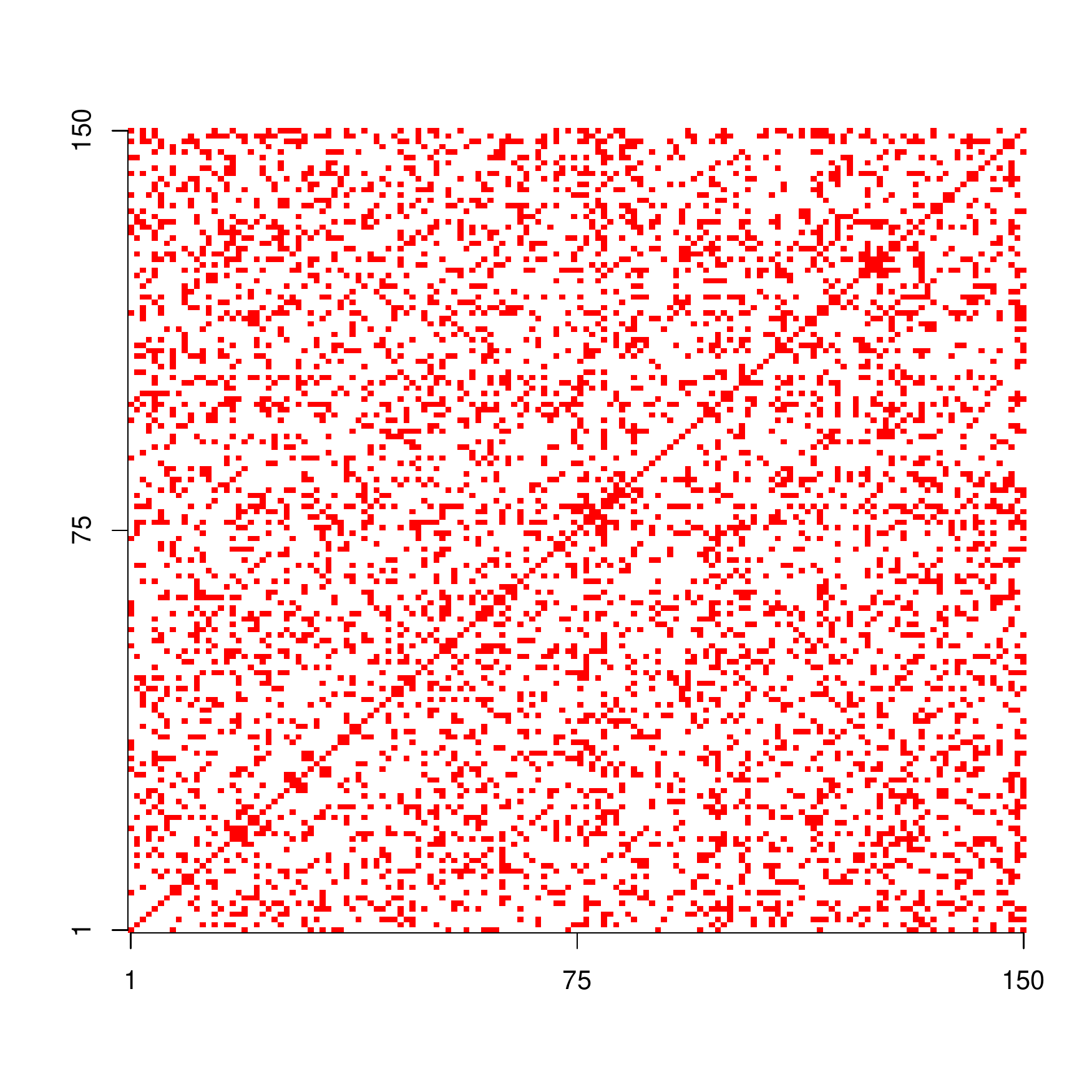} 				 						
		\caption{\label{fig3} Examples  of  adjacency  matrices generated  under different scenarios.  The first to the fourth rows correspond to the first to the fourth scenarios, respectively.  In each row, from left to right, the first two plots correspond to networks generated  before the change point, and the last two plots  to networks generated  after the change point.  In each display, a white dot indicates one and a red dot indicates zero. }
	\end{center}
\end{figure}

\begin{table}[t!]
	\centering
	\caption{\label{tab1}  Upper bounding overall Type-I errors: $n$, the network size; $\alpha$, the Type-I error upper bound; $T_{\mathrm{train}}$, the time length of the training data used for selecting tuning parameters; ORI, \cite{chen2019sequential} using the original edge-count scan statistic; W, \cite{chen2019sequential} using the weighted edge-count scan statistic; G, \cite{chen2019sequential} using the generalised edge-count scan statistic.}
	\medskip
	\setlength{\tabcolsep}{6pt}
	\begin{tabular}{ rrrr|rrrr|rrrrr}
		\hline
		\multicolumn{4}{c|}{Settings} & \multicolumn{4}{c|}{Delay}& \multicolumn{4}{c}{PFA} \\
		\hline
		$n$ & Scenario     &$\alpha$       & $T_{\mathrm{train}}$            & Algo. \ref{alg-online-net}           &  ORI          &        W          &G          & Algo. \ref{alg-online-net}                    &  ORI          &        W          &G    \\  
		\hline  
		150&    1                & 0.01             &200           &      \textbf{35.38}                           &  145.92     &   147.44    & 147.44   &                 \textbf{0.00}                                 &   0.02&   \textbf{0.00}         &  \textbf{0.00}  \\    
		150  & 1                  &    0.05        &   200        &       \textbf{32.93}                           &    135.41      &  137.42 &    137.42       &        \textbf{0.02}                                  &    0.04&      0.03       &     0.03        \\           
		150   &     1            &    0.01         &150          &             \textbf{33.10}                     &   145.92       &       146.08  &    146.08   &          0.02                     &   0.02      & \textbf{0.00}  &        \textbf{0.00}                \\     
		150 &     1             &   0.05          &  150         &           \textbf{30.61}                           &     140.09  &      139.55&     139.55 &                  \textbf{0.02}                                 & 0.03 &          \textbf{0.02}        &       \textbf{0.02}           \\     
		100 &      1             &    0.01         &200          &            \textbf{89.74}                      &  147.66    &       148.97    &     148.97   &            \textbf{0.00}                          & 0.02     &   \textbf{0.00}        & \textbf{0.00}          \\     
		100 &   1                 &      0.05        &  200        &              \textbf{54.90}                     &   135.94   &       142.88  &  142.88    &           \textbf{0.05}                                &      0.07           &     \textbf{0.05}            &   \textbf{0.05}      \\     
		100      &           1        &      0.01        &150   &                          \textbf{73.14}                &        148.95&     149.83  &      149.83   &          \textbf{0.00}                     &        0.02     &   \textbf{0.00}   &   \textbf{0.00}  \\     
		100     &           1         &       0.05     &150           &                   \textbf{50.52}                 &   135.94     &   141.77  &         141.77      &                 0.06                            &    \textbf{0.04}  &           0.05            &    0.05   \\     
		150      &      2           &        0.01      &     200     &                 \textbf{25.00}               &      149.36&        150.00  &   150.00     &                \textbf{0.00}                 &    \textbf{0.00}         &  \textbf{0.00}       &    \textbf{0.00}       \\     
		150    &       2            &      0.05        &    200      &               \textbf{22.84}              &        149.14    &   146.16     &   146.16       &                        0.05                         &     0.05   & \textbf{0.03}    &  \textbf{0.03} \\     
		150   &     2          &        0.01             &   150    &                \textbf{24.64}               &     150.00 &  148.39  &      148.39       &                         \textbf{0.00}                         &  \textbf{0.00} & 0.02 &         0.02   \\     
		150    &        2       &      0.05   &       150             &              \textbf{22.34}               &      149.14   &  144.76       &   144.76          &                        \textbf{0.02}                &      0.06&       0.05       &       0.05      \\     
		100  &       2          &         0.01        &    200    &                  \textbf{92.90}                    &     150.00      &     150.00       &        150.00   &                 0.05                          &        \textbf{0.01}    &      0.02 &   0.02       \\     
		100    &          2        &       0.05        &     200      &                   \textbf{68.48}            &    150.00     &     147.35      &      145.88     &                         0.06                            &         \textbf{0.03}         &              0.04        &   0.04       \\     
		100     &          2        &         0.01     &       150      &              \textbf{92.94}                 &  150.00    &       150.00    &       150.00   &                  0.02                        &    0.02   &       \textbf{0.01}     &    \textbf{0.01}   \\      
		100 &          2         &           0.05& 150           &                \textbf{68.48}              &    149.12   &    150.00      &      150.00    &         0.06                                    &       0.04 &       \textbf{0.03}      &  \textbf{0.03}      \\         
		150  &         3           &            0.01   & 200       &               \textbf{26.62}                        &      150.00  &    148.61   &       73.05    &            \textbf{0.00}                           &          0.02       & 0.03              &        0.03       \\     
		150    &      3       &            0.05      &      200     &               \textbf{17.74}                      &       147.23 &      140.67    &   62.88     &         \textbf{0.02}                               &    0.06 &   0.04             &        0.04     \\     
		150  &        3         &        0.01     &   150     &             \textbf{25.54}                      &      150.00  &          150.00     &          76.54      &                   \textbf{0.00}                   &  0.01 &               0.01        &        0.02    \\     
		150 &        3          &           0.05       &     150    &                 \textbf{17.11}              &       146.80     &     138.90  &        59.85     &                 \textbf{0.03}                      &          0.05      &  0.04            &     0.04        \\                
		100 &       3      &       0.01        & 200    &                       \textbf{38.36}              &       150.00  &      149.85    &     75.98  &               0.01                    &   \textbf{0.00}        &            0.01          &    0.01         \\      
		100 &      3      &    0.05        &    200     &                         \textbf{35.48}               &  149.19    &     148.45     &             64.6    &               \textbf{0.02}                             &    \textbf{0.02}      &   0.05            &           0.05 \\         
		100 &       3        &          0.01          &    150       &                   \textbf{35.23}              &  149.57      &    149.85    &     77.01    &  0.01                          &    \textbf{0.00}      &             0.01             &         0.01               \\     
		100&       3          &      0.05           &       150       &               \textbf{36.06}           &       149.14     &     148.35        &       58.07      &                         0.04                                &   \textbf{0.03}       &      0.06                 &        0.08    \\     
		150 &     4          &            0.01       &       200     &                   \textbf{3.68}       &      13.67            &              10.33     &     10.78      &               \textbf{0.00}                  &            0.03     &        0.04               &        0.03     \\     
		150 &      4    &       0.05      &     200        &               \textbf{3.35}                 &      12.77     &           9.81   &          10.10         &                       \textbf{0.05}                             &           0.07      &    \textbf{0.05}                         &     0.06             \\      
		150&         4      &      0.01    &    150         &                   \textbf{3.70}                   &      14.19    &        11.19       & 12.01       &                          \textbf{0.01}                      &     \textbf{0.01}            &           0.02            &      0.01       \\      
		150  &       4         &         0.05       &  150           &                 \textbf{3.33}                     &   12.60     &           9.68   &   10.01       &                        \textbf{0.03}                  &           0.07      &        0.05               &        0.04     \\         
		100&    4          &       0.01          &    200   &                   \textbf{4.00}                  &         16.51    &        13.12      &    13.12   &                       \textbf{0.00}                  &      \textbf{0.00}       &    \textbf{0.00}        &            \textbf{0.00}    \\     
		100  &     4            &       0.05         &   200    &                \textbf{3.80}                          &    14.28     &  12.05        &      12.03    &                   0.04                             &        0.05         & \textbf{0.03}            &         0.05    \\     
		100  &     4         &       0.01     &          150       &                        \textbf{4.00}            &  16.95             &      12.66     &   12.62       &            \textbf{0.00}                               &   \textbf{0.00}               &  0.02              &    0.03  \\     
		100 &     4         &        0.05       &       150   &                   \textbf{4.00}                 &      14.91           &      12.14       &       12.12     &                   0.05                     &  \textbf{0.03}               &     \textbf{0.03}        & 0.05 \\      
		\hline	         
	\end{tabular}
\end{table}

\begin{table}[t!]
	\centering
	\caption{\label{tab2}  Lower bounding the average run lengths:  $n$, the network size; $\gamma$, the average run length lower bound; ORI, \cite{chen2019sequential} using the original edge-count scan statistic; W, \cite{chen2019sequential} using the weighted edge-count scan statistic; G, \cite{chen2019sequential} using the generalised edge-count scan statistic.}
	\medskip
	\setlength{\tabcolsep}{7pt}
	\begin{tabular}{ rrr|rrrr|rrrrr}
		\hline
		\multicolumn{3}{c|}{Settings} & \multicolumn{4}{c|}{Delay}& \multicolumn{4}{c}{PFA} \\
		\hline
		$n$ & Scenario            &$\gamma$     & Algo.\ref{alg-online-net-variant}         &  ORI          &        W          &G          & Algo. \ref{alg-online-net-variant}           &  ORI          &        W          &G    \\  
		\hline  
		150 &        1                   & 150            &         \textbf{20.77}                             &  71.09         &      80.92         &  70.26 &         \textbf{0.28}                                      &     0.56    &             0.50 & 0.54\\                 
		150  &             1             &      200          &             \textbf{23.56}                     &    93.52         &        101.94       &   103.66  &            \textbf{0.18}                                    &       0.32  &          0.32    &  0.34\\    
		100  &             1             &     150           &             \textbf{29.57}                     &     88.01      &         75.59    & 101.90   &              \textbf{0.34}                                  &       0.52    &      0.56     & 0.56 \\                          
		100 &             1             &       200          &          \textbf{34.71}                     &     97.54         &      104.00       &  105.75&                   \textbf{0.22}                           &     0.31      & 0.28             &   0.28\\                       
		150  &            2             &       150        &            22.08                                &     136.65   &          10.00      &      \textbf{2.22}  &                \textbf{0.11}                   &  0.48       &          0.62               &  0.64\\      
		150 &               2             &           200      &          23.48                                       &    139.94     & 17.3            &      \textbf{12.76}  &                   \textbf{0.06}             &0.30     &     0.41            &  0.41\\   
		100&        2                      &     150         &            \textbf{44.58}                         &   121.55    &     58.78           &   64.15    &                        \textbf{0.52}                       &    0.60        &     0.62       &         0.62   \\    
		100  &         2                 &       200     &                \textbf{51.83}                        &133.44           &     59.92          &    56.46 &                    0.52                                     &    \textbf{0.46}     &         \textbf{0.46}       &  0.49\\   
		150 &         3               &          150       &              \textbf{9.44}                        &    119.12   &     102.64       &     33.78  &                      \textbf{0.00}                         &     0.50       &     0.66              &0.62  \\   
		150 &             3               &      200        &             \textbf{11.08}                           &     128.34     &     112.76   &37.82    &                     \textbf{0.00}                           &      0.32     &    0.40              &  0.42\\           
		100    &         3                &     150          &               \textbf{22.17}                       &   115.76   &          114.00         &   38.66  &                       \textbf{0.08}                          &  0.66   &  0.52           & 0.46 \\
		100   &          3                &      200           &                \textbf{ 24.97}                        &    141.5    &    133.59         &     42.20    &                     \textbf{0.06}                         &     0.44        &      0.37                   &  0.32\\            
		150&          3              &           150     &              \textbf{2.21}                             &  9.00    &    7.53                      &7.82        &                    \textbf{0.45}                            &            0.53   &         0.48              &  0.54 \\    
		150 &          3                 &         200      &                   \textbf{2.37}                        & 10.21       &      8.48             &  8.81         &                       \textbf{0.26}                              & 0.36      &         0.38           & 0.37 \\   
		100  &         3                  &        150      &                  \textbf{3.80}                        &     10.28     &        9.59      &     9.59     &               \textbf{0.18}                                             &        0.50       &        0.47           &  0.58\\                                                                  
		100 &           3                 &         200      &                     \textbf{3.95}                                 & 10.97        &      9.88          &        10.63     &            \textbf{0.13}                                              &         0.30      &         0.30                &0.28  \\             
		\hline	         
	\end{tabular}
\end{table}

\paragraph{Scenario 4.} This is a random dot product graph  \citep{YoungScheinerman2007} with fixed latent positions.  First, we generate the latent positions $X \in \mathbb{R}^{n \times 5}$  as
	\[
		X_{ij}  \stackrel{\mbox{i.i.d.}}{\sim}  \mathrm{Unif}[0,1], \quad i = 1,\ldots, n, \, j = 1,\ldots,5,
	\]
	which are kept fixed throughout our simulations.  We then construct  
	$\widetilde{X} \in \mathbb{R}^{n \times 5}$  as
	\[
		\tilde{X}_{ij}  \stackrel{\mbox{i.i.d.}}{\sim}  \mathrm{Unif}[0,1], \quad i = 1,\ldots, n, \, j = 1,\ldots,5,
	\]
	which are also kept fixed throughout our simulations.   Finally, the data are generated as 
	\[
		A_{ij}(t) \sim \text{Bernoulli}\left(       \frac{ X_i^{\top} X_j   }{  \|X_i\|  \,\|X_j\|  }        \right  ), \quad t \in \{1, \ldots, \Delta\} 
	\]
	and
	\[
		A_{ij}(t) \sim \text{Bernoulli}\left(          \frac{ Y_i^{\top} Y_j   }{  \|Y_i\|  \,\|Y_j\|  }        \right  ), \quad t \in \{\Delta + 1, \Delta + 2, \ldots, T\},
	\]
	where $X_i, \widetilde{X}_i \in \mathbb{R}^5$ are the $i$th rows of the matrices $X$ and $\widetilde{X}$, $\|\cdot\|$ is the $\ell_2$-norm of vectors, and
	\[
		Y_ i  \,=\, \begin{cases}
		\widetilde{X}_i, & i \leq \floor{n/4},\\
		X_i, & \text{otherwise.}  
		\end{cases}  
	\]

We collect the results in \Cref{fig3}, Tables~\ref{tab1} and \ref{tab2}.  In \Cref{fig3}, we exhibit one realisation each for each scenario.  Each row corresponds to each scenario, from the first to the fourth.  In each row, the left two panels are realisations before change points, and the right two panels are the post change points realisations.  It can be seen from \Cref{fig3} that, these four scenarios cover different types of networks, and the change points are hard to spot with the naked eye.

Tables~\ref{tab1} and \ref{tab2} correspond to the two different ways to control Type-I errors.  We reiterate that \Cref{alg-online-net}, \Cref{thm-net-delay-varying-N} and \Cref{tab1} correspond to the strategy of controlling the overall Type-I error $\alpha$.  \Cref{alg-online-net-variant}, \Cref{thm-selection} and \Cref{tab2} correspond to the strategy of lower bounding the average run length $\gamma$. 

We can see that if we choose to upper bound the overall Type-I error, then \Cref{alg-online-net} outperforms all three versions of \cite{chen2019sequential}.  If we choose to lower bound the average run length, then \Cref{alg-online-net-variant} still outperforms all three competitors except in all instances of Scenario 2.  In fact, \Cref{alg-online-net} also performs worst in Scenario 2 out of all four scenarios.  A possible reason why most methods suffer with Scenario 2 is that this is the model that has the largest $r$, the rank of the difference of the graphons.  It is understood that the USVT algorithm \citep[\Cref{alg-usvt}][]{Chatterjee2015} is less effective when the rank is relatively large.  This is also reflected in the detection delay rate, which is linear in $r$.

\subsection{Stock market data}
\label{sec:stock}

We consider  stock market data   from  April 1990 to January 2012. The data  consist of  the weekly log returns for the Dow Jones Industrial Average index and they are available in the R \citep{R} package ecp \citep{Recp}.  To construct networks, we first use a sliding window of window width being 3, and consider the covariance matrix  among 29 companies' log-weekly-returns over a 3 week period.  We then truncate the covariance matrices  by setting those entries which have values above the 0.95-quantile as 1, and the remaining as 0.  This construction leads to sparse networks.  Some examples of these networks are illustrated in the first two  rows of  Figure \ref{fig4}.

As competitors to our estimator, we consider the   same statistics  (ORI), (W) and  (G)  from \cite{chen2019sequential}  that were  used in \Cref{sec:sim}. To evaluate the performances of these methods. we have chosen two periods of the original data, each consisting of a training set and a test set.  We calculate the maximum score of each method using the training data and use the maximum score as the threshold for detecting false alarms.  

In the first period, the data  from 2-Apr-1990  to 4-Jan-1999 are used as the training set, and the data from 25-Jan-1999  to 31-May-2004 are used as the test set.  The algorithm we proposed in \Cref{alg-online-net}  detects a change point corresponding to 25-Mar-2002.  This seems to coincide with the period of financial turbulence after the  11-Sep-2001  terrorist attacks. We can also see in the first row of \Cref{fig4} that there seems to be a change in pattern around such date. In contrast, the competitor methods did not detect the change point with the given choice of threshold.

In the second period,  the data from  31-May-2004  to  15-Jan-2007 are used as the training set, and the data from  5-Feb-2007  to 1-Mar-2010 are used as the test set.  We remark that the training data correspond to the period before the financial crisis of 2007--2008.   Algorithm \ref{alg-online-net} detects a change point  corresponding to the date 10-Mar-2008.  The competing approaches  detect a change point in the same period.  Specifically, ORI detects the date December 17, 2007;  and both W and G  detect the date  19-Mar-2007.  From looking at the second row of \Cref{fig4}, we can see that the 2007-2008 financial crisis seems to affect the network patterns  in the  data.






\begin{figure}[t!]
	\begin{center}
		\includegraphics[width=0.24\textwidth]{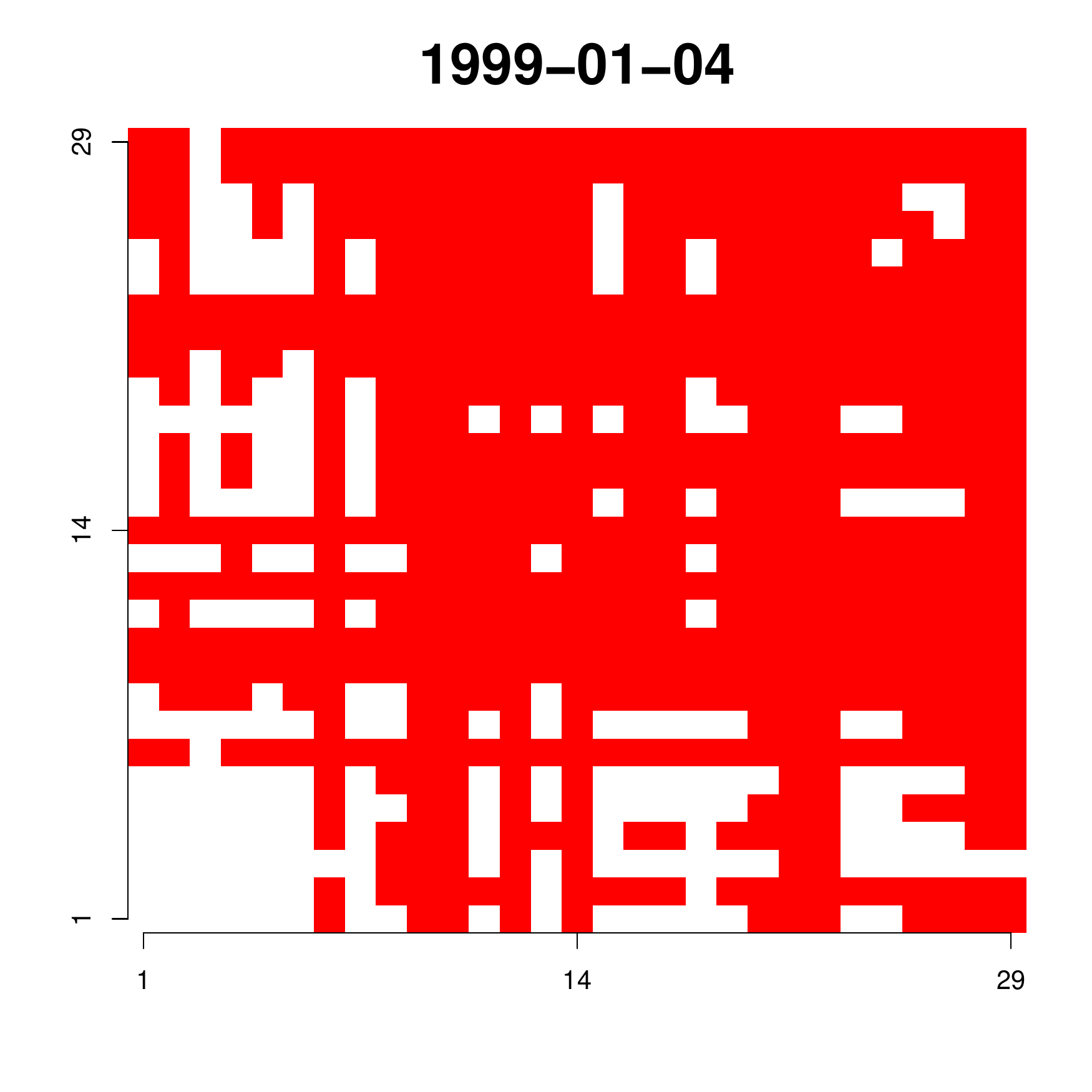} 
		\includegraphics[width=0.24\textwidth]{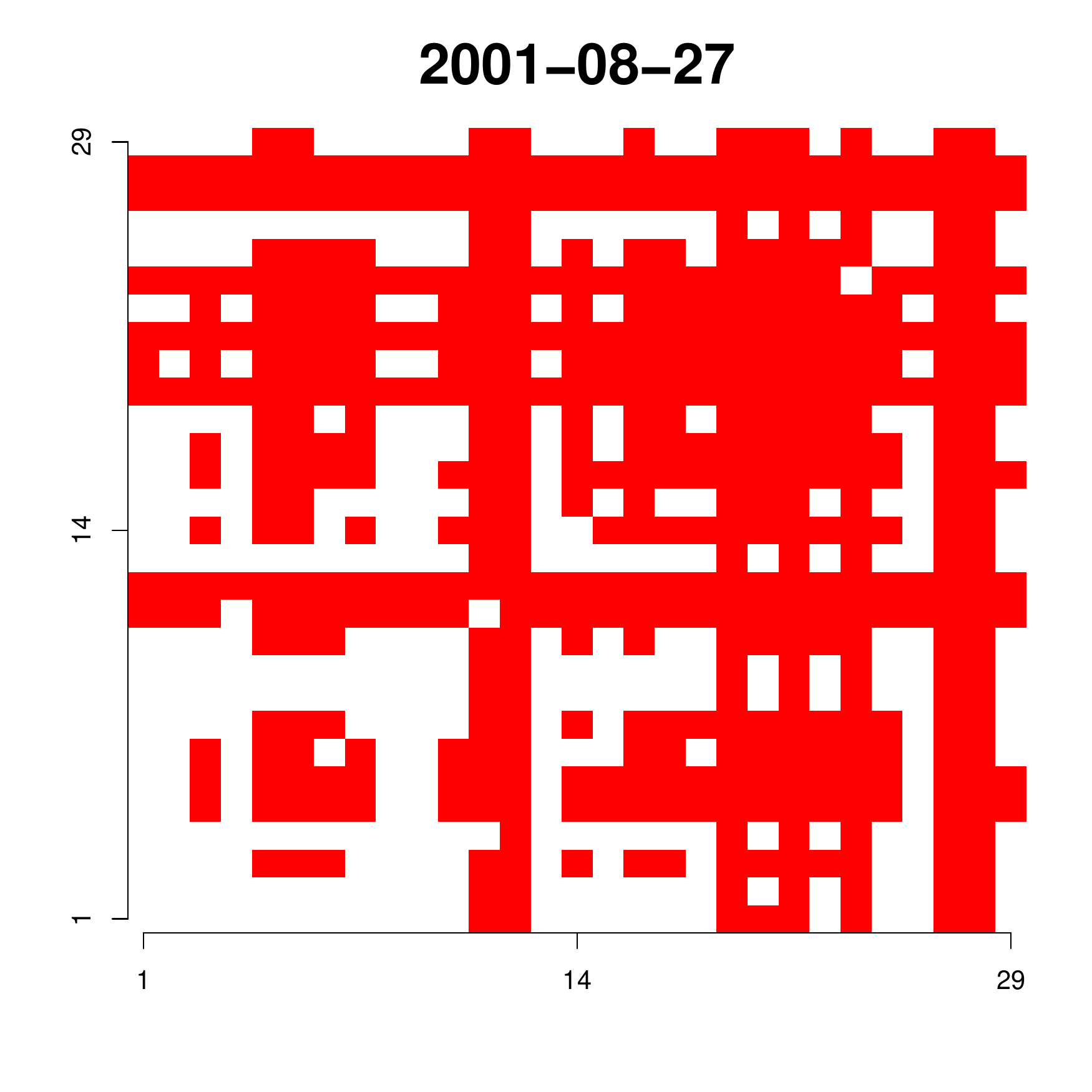} 	
		\includegraphics[width=0.24\textwidth]{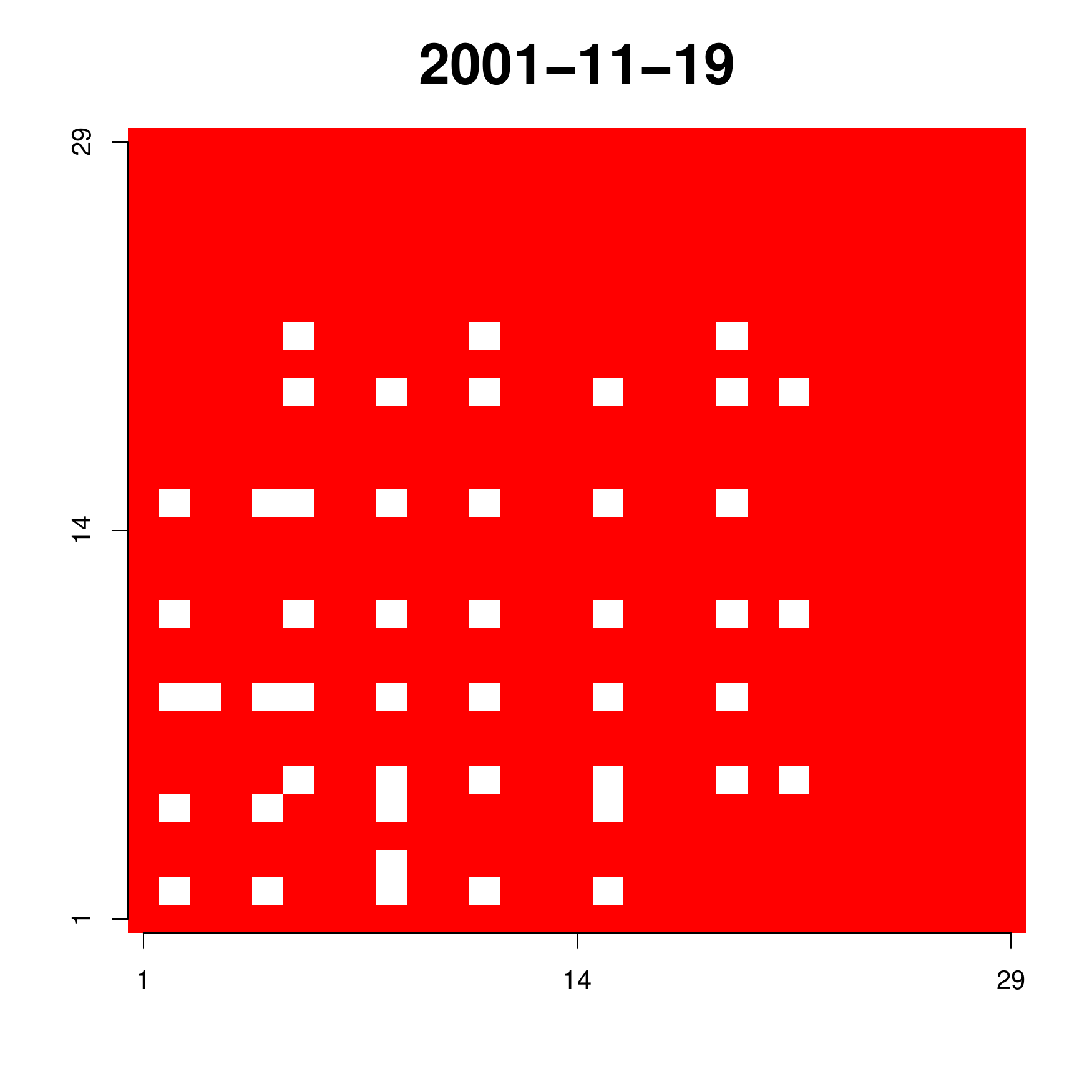} 
		\includegraphics[width=0.24\textwidth]{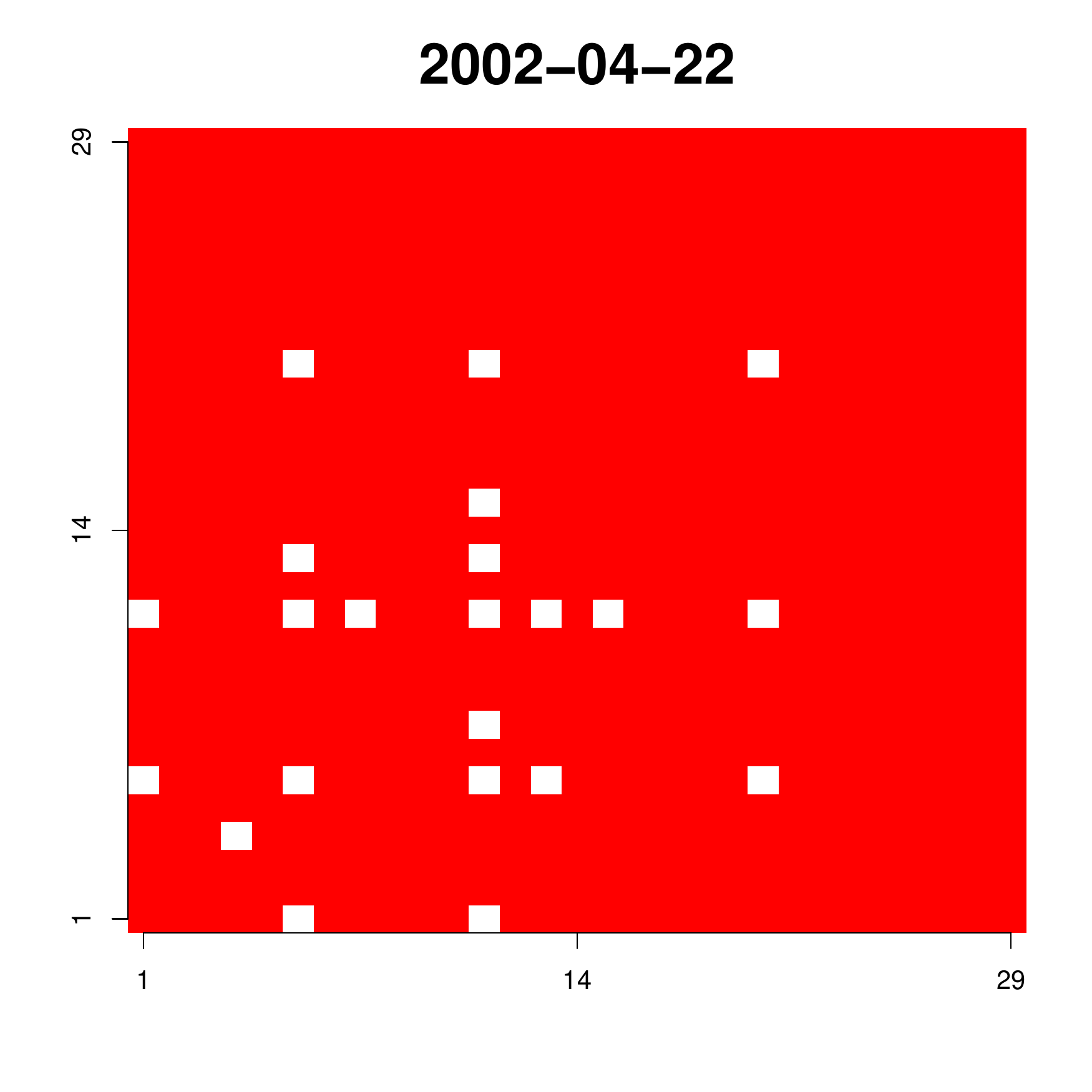} 
		\includegraphics[width=0.24\textwidth]{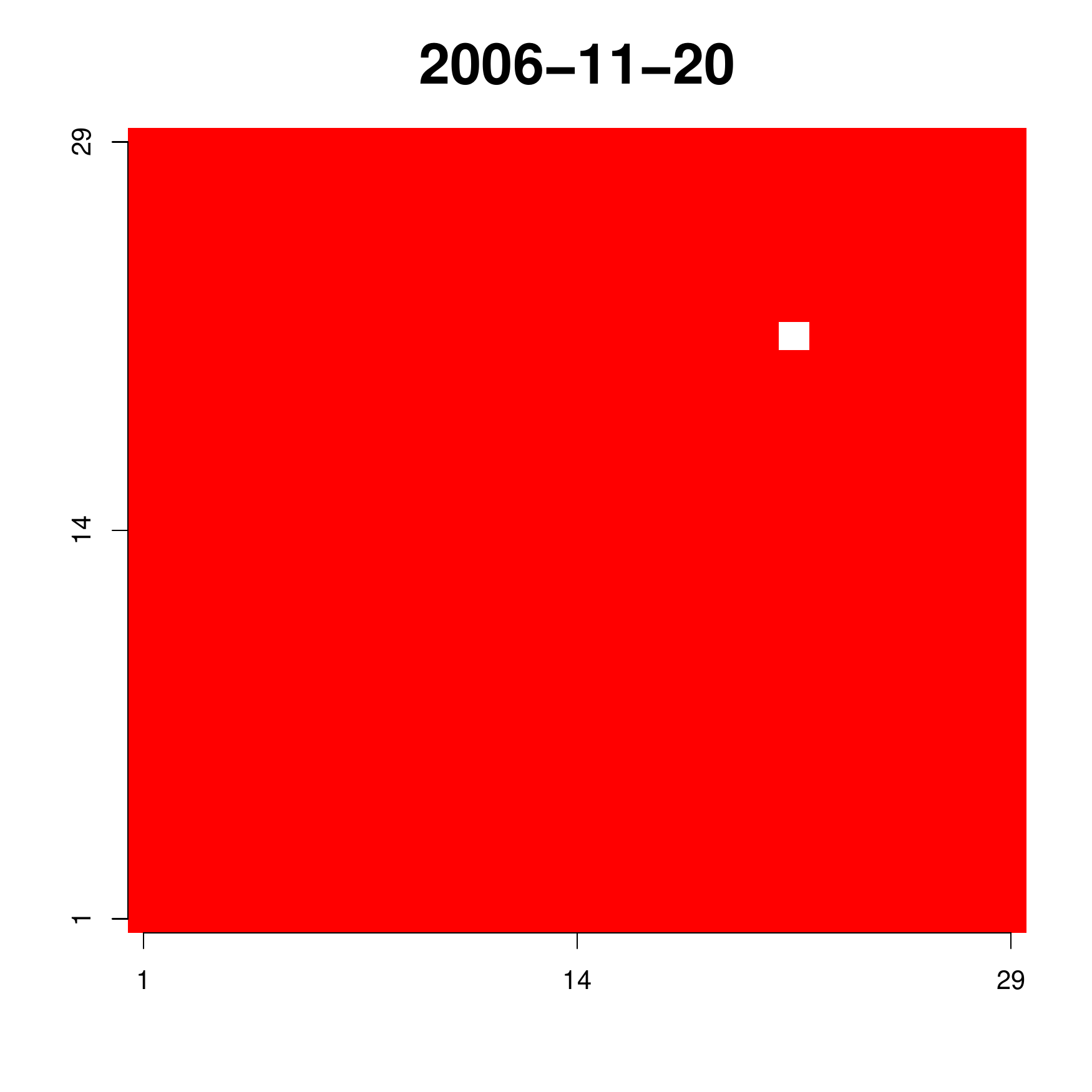} 
		\includegraphics[width=0.24\textwidth]{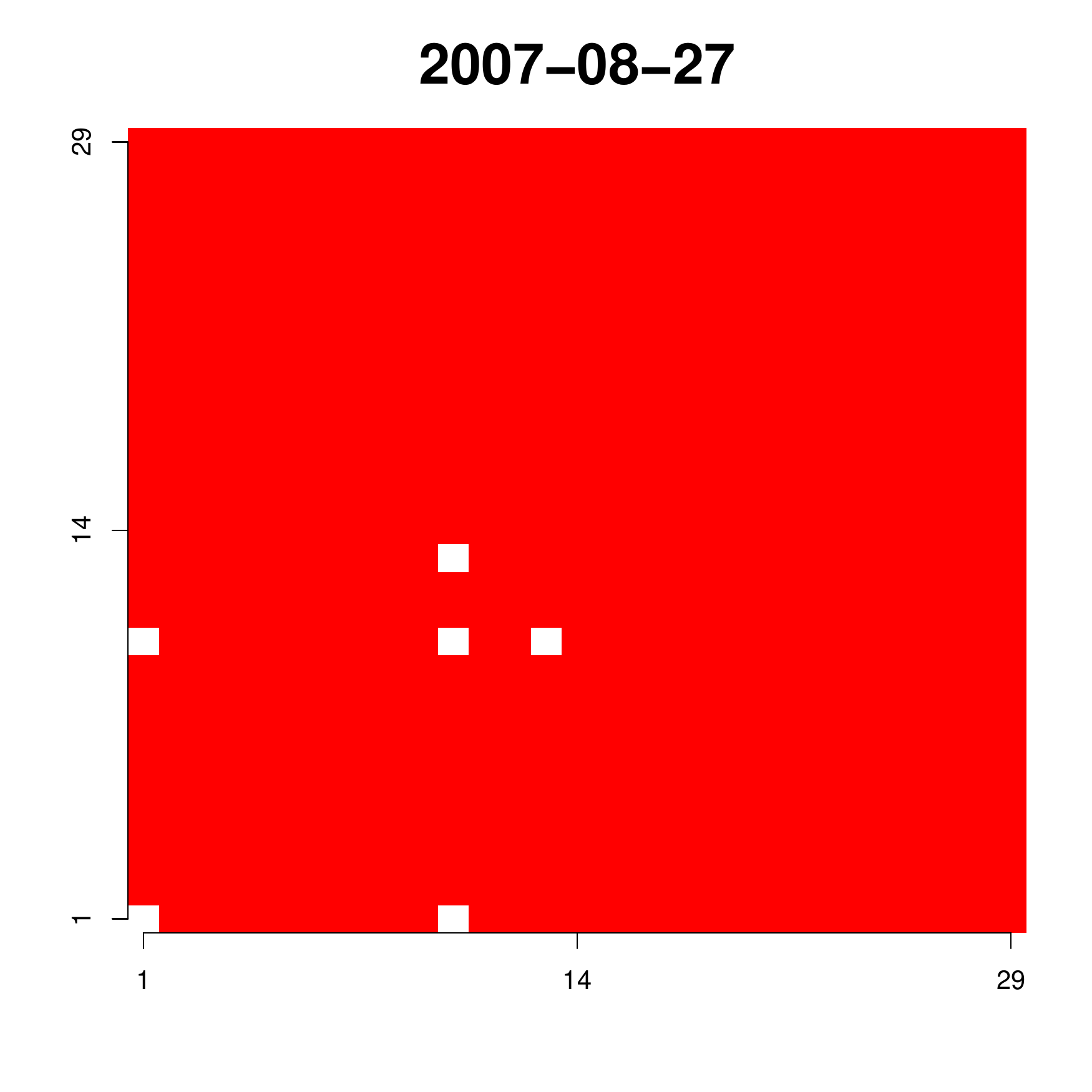} 	
		\includegraphics[width=0.24\textwidth]{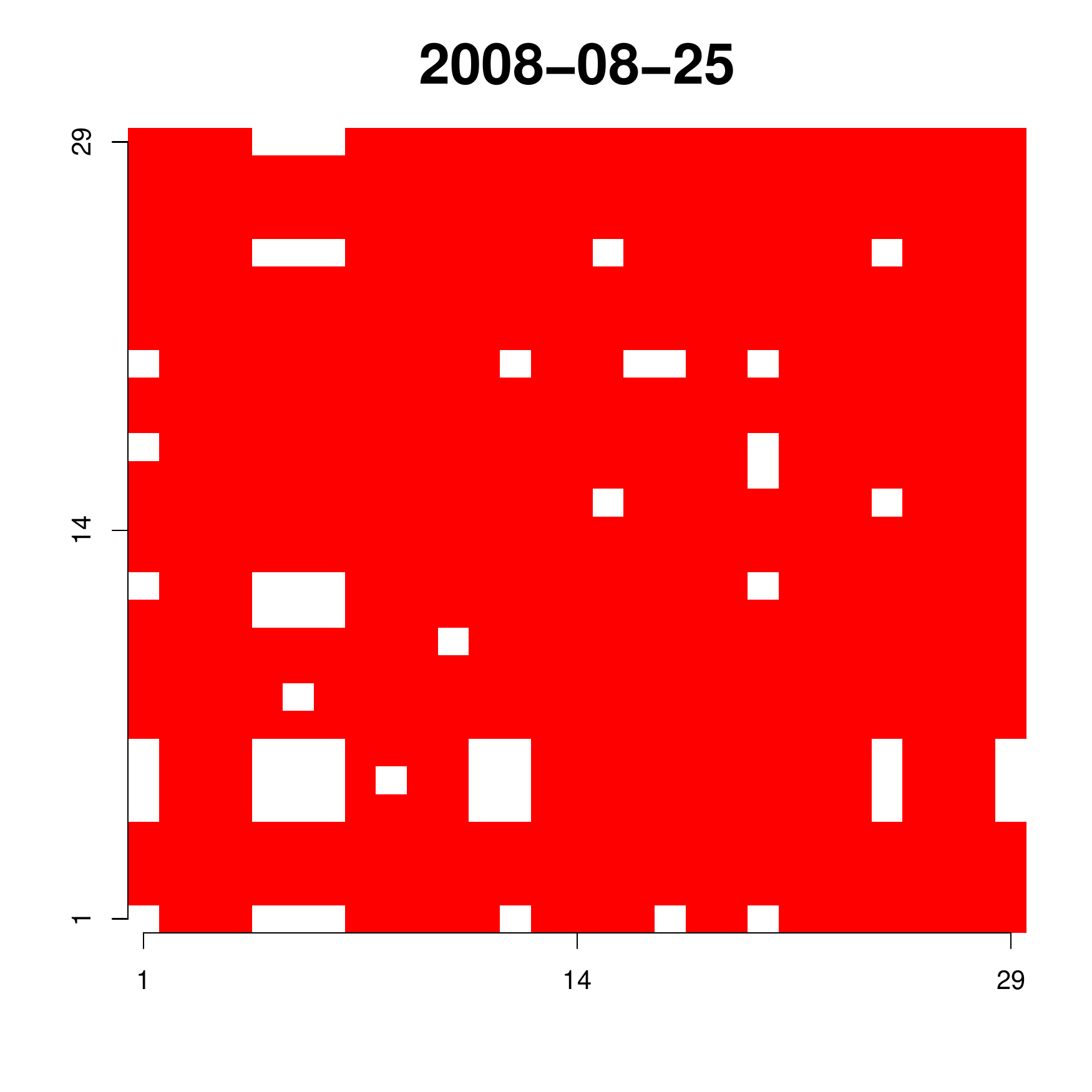} 
		\includegraphics[width=0.24\textwidth]{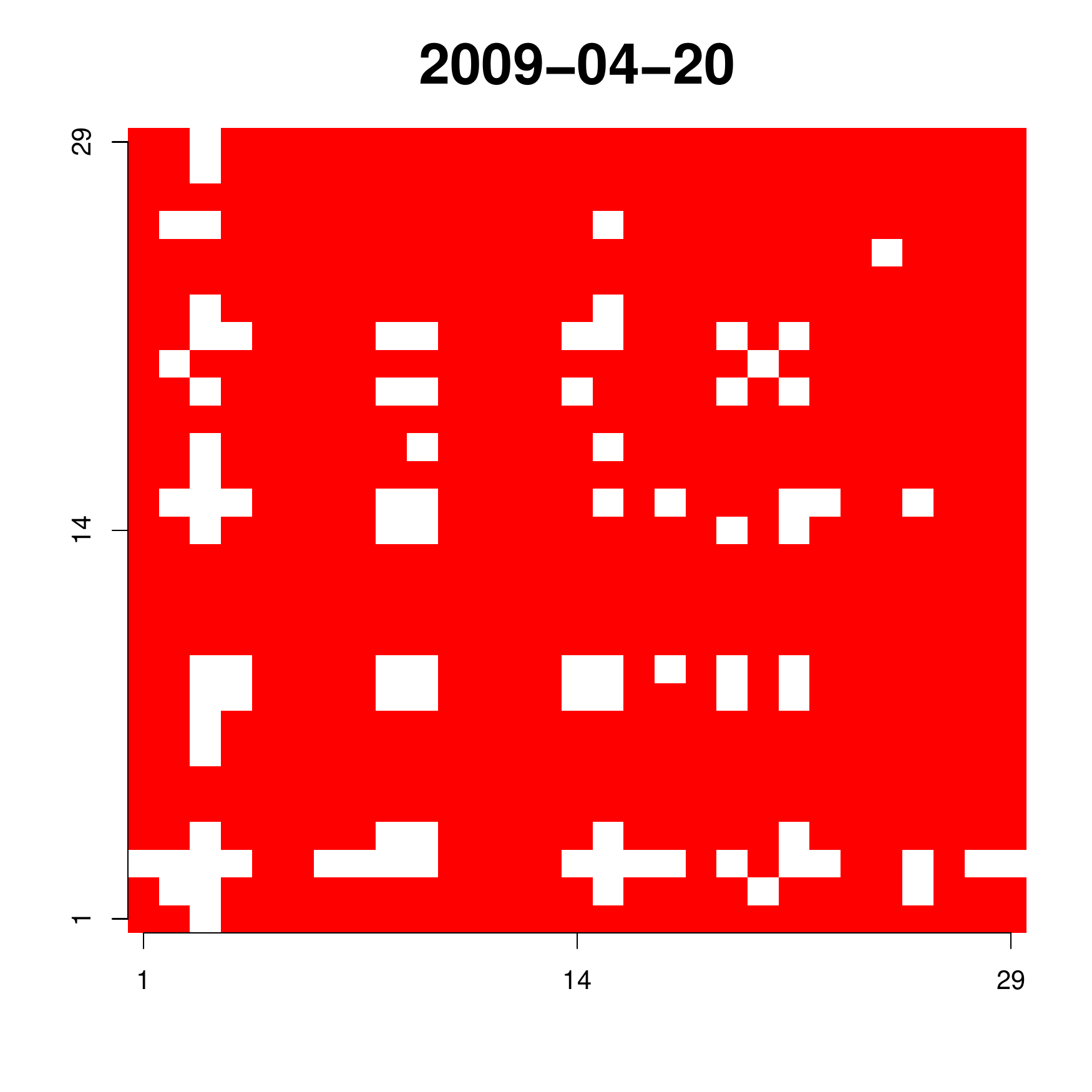} 
		\caption{\label{fig4} Stock market data set described in  Section \ref{sec:stock}.}
	\end{center}
\end{figure}

\subsection{MIT cellphone data}
\label{sec:cellphone_data}

In \Cref{sec-introduction}, we have studied the MIT cellphone data set.  In this subsection, we provide all numerical details.

Originally, the data are in the form of 1392 networks of size $96 \times 96$.  For each day of the experiment, the data include four networks corresponding to six hours interval in the given day.  We sum the four networks during each day resulting in 232 networks, each with 96 nodes.  The networks are transformed to binary networks by setting all strictly positive entries as $1$.  In other words, in each binary network, if the entry $(i,j)$ equals $1$, then it means that participants $i$ and $j$  were within physically close proximity during the corresponding day.

To evaluate the performances of different methods, we construct two experiments. In the first we use the data from 14-Sept-2004 to 1-Dec-2004 as the training data set, and the data from 2-Dec-2004 to 15-Feb-2005 as the test set.  The training set is the period before the MIT winter recess, which that year took place from 22-Dec-2004 to   3-Jan-2005.  The thresholds are chosen in the same way as \Cref{sec:stock}.  \Cref{alg-online-net} detects a change point on 27-Dec-2004;  ORI on 30-Jan-2005;  W and G on 29-Jan-2005.  Clearly, our method is the best at detecting the winter recess period.  

In our second example, we use the data from  1-Jan-2005 to 3-Mar-2005 as the training data set and the data from 4-Mar-2005 to 5-May-2005 as the test set.   Thus the  training data  consist of networks  before the  spring recess  which took place from  26-Mar-2005 to 3-Apr-2005.  \Cref{alg-online-net}  detects a change point on 31-Mar-2005; and  ORI on 6-Apr-2005.  Thus, our method seems to be quicker at detecting the spring recess. 

\section{Discussions}

In this paper, we are concerned with online change point detection in a sequence of inhomogeneous Bernoulli networks.  We established the minimax lower bound on the detection delay, which matches an upper bound, saving for logarithmic factors, based on an NP-hard estimation procedure.  In addition, we proposed a polynomial-time algorithm, the detection delay of which matches the that based on NP-hard estimators in the extreme cases, i.e.~$r \asymp 1$ and $r \asymp n$.  

Our proposed methods consist of two different Type-I error control strategies, with the worst case computational cost of order $O(\log(t) \mathrm{Cost}(n))$ when proceeding to the time point $t$, where $\mathrm{Cost}(n)$ is the computational cost for running the USVT algorithm (\Cref{alg-usvt}) on a size-$n$ network.

In this paper, we only discuss the at most one change point scenario.  In fact, it is straightforward to extend the algorithm and the results to multiple change points scenario.  To be specific, one can restart the algorithm whenever a change point is declared by \Cref{alg-online-net}.  As for the theoretical results, in \Cref{thm-net-delay-varying-N}, we can let $\Delta$ be the minimal spacing between two consecutive change points.  Provided that
	\[
		C_{\mathrm{SNR}} n r\log(\Delta/\alpha) > 2 C_d,
	\]  
	then with probability at least $1 - \alpha$, all change points can be detected with detection delay uniformly upper bounded by the same detection delay upper bound in \Cref{thm-net-delay-varying-N} and without false alarms.  This is a straightforward consequence of \Cref{thm-net-delay-varying-N}, therefore we omit the technical details here.
	
In the existing literature, it is hoped to have an online change point detection with constant cost proceeding to every time point.  In this sense, our cost $O(\log(t))$ is not efficient enough.  However, to the best of our knowledge, when the before and after distributions are not fully specified, this constant computational cost is not achievable even in the univariate case.  Having said this, it is still of vital interest to improve the computational efficiency of network online change point detection methods.  We will leave this for future work.

\appendix
\section*{Appendices}

All necessary lemmas are collected in \Cref{sec-lemmas} and all proofs of the main results are left in \Cref{sec-thms}.

\section{Network change point lemmas}\label{sec-lemmas}

This lemma below is identical to Lemma S.6 in \cite{wang2018optimal}, therefore we skip the proof here.

\begin{lemma}\label{lem-op-net-control}

\noindent {\bf (1)} For any $t \in \mathbb{N}^*$, let $\{A(l)\}_{l=1}^t$ be a collection of independent matrices with independent Bernoulli entries satisfying
	\[
		\max_{l = 1, \ldots, t}\|\mathbb{E}(A(t))\|_{\infty}\le \rho,
	\]	
	with $n\rho \geq \log(n)$.  Let $\{w_l\}_{l=1}^t \subset \mathbb{R}$ be a collection of scalars such that $\sum_{l=1}^t w_l^2 = 1$ and $\sum_{l=1}^t w_l =0$.  Then there exists an absolute constant $C > 32 \times 2^{1/4}e^2$ such that for any $\varepsilon > 0$,
		\begin{align}\label{eq:operator norm bounds} 
			\mathbb{P}\left( \left\| \sum_{l=1}^t w_l A(l) - \mathbb{E} \left( \sum_{l=1}^t w_l A(l)  \right)\right\|_{\mathrm{op}}  \ge C \sqrt{n\rho} + \varepsilon  \right)\le  \exp(-\varepsilon^2/2).
		\end{align}

\vskip 3mm
\noindent {\bf (2)} If $\{ A (l)\}_{l=1}^t$ are symmetric matrices, then \eqref{eq:operator norm bounds} still holds.
\end{lemma}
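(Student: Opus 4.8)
The plan is to split the deviation into its expectation and a fluctuation around that expectation, so that the bound $C\sqrt{n\rho}+\varepsilon$ arises as (expected operator norm) $+$ (fluctuation). Write $M=\sum_{l=1}^t w_l\{A(l)-\mathbb{E}(A(l))\}$, so that the target probability concerns $\|M\|_{\mathrm{op}}$. The first observation I would record is the entrywise structure of $M$: each entry $M_{ij}=\sum_{l=1}^t w_l\{A_{ij}(l)-\Theta_{ij}(l)\}$ is a weighted sum of independent centred Bernoulli variables, hence $\mathbb{E}(M_{ij})=0$; since $\sum_l w_l^2=1$ and $\mathrm{Var}(A_{ij}(l))\le\rho$, we get $\mathrm{Var}(M_{ij})\le\rho$; and each summand is bounded by $|w_l|\le 1$. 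Thus $M$ is a random matrix with independent, mean-zero, variance-at-most-$\rho$ entries whose higher moments grow sub-exponentially (by a Bernstein bound on the bounded summands), which is exactly the setting of the standard operator-norm bounds for random matrices.

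The first and main step is to show $\mathbb{E}\|M\|_{\mathrm{op}}\le C\sqrt{n\rho}$. I would establish this by the moment (trace) method: bound $\mathbb{E}\|M\|_{\mathrm{op}}^{2k}\le\mathbb{E}\,\mathrm{tr}(M^{2k})$ for a well-chosen $k\asymp\log(n)$, expand the trace as a sum over closed walks of length $2k$ on the index set, and estimate the contribution of each walk using the entrywise variance $\rho$ and the sub-exponential moment growth of the entries. The hypothesis $n\rho\ge\log(n)$ is precisely what ensures that the variance term dominates the maximal-entry term, producing the clean rate $\sqrt{n\rho}$ with no extra logarithmic factor, and the explicit constant $32\times 2^{1/4}e^2$ emerges from this combinatorial bookkeeping. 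I expect this to be the main obstacle, since the constant must be tracked carefully and one must verify that the merely sub-exponential (rather than uniformly bounded relative to the variance) nature of the entries does not inflate the rate.

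The second step is a dimension-free concentration of $\|M\|_{\mathrm{op}}$ about its mean. Regarding $\|M\|_{\mathrm{op}}=\sup_{\|u\|=\|v\|=1}u^{\top}Mv$ as a function of the independent, $[0,1]$-valued variables $\{A_{ij}(l)\}$, it is convex (a supremum of linear functionals of $M$, and $M$ is linear in those variables) and $1$-Lipschitz in the Euclidean norm: if two configurations $x,x'$ differ only in the entries of the $A(l)$, then $\|M(x)-M(x')\|_{\mathrm{F}}^2=\sum_{i,j}\bigl(\sum_l w_l(x_{ij,l}-x'_{ij,l})\bigr)^2\le\sum_{i,j,l}(x_{ij,l}-x'_{ij,l})^2$ by Cauchy–Schwarz and $\sum_l w_l^2=1$, while $\|\cdot\|_{\mathrm{op}}\le\|\cdot\|_{\mathrm{F}}$. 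Talagrand's convex concentration inequality for independent bounded variables then yields $\mathbb{P}(\|M\|_{\mathrm{op}}\ge\mathbb{E}\|M\|_{\mathrm{op}}+\varepsilon)\le\exp(-\varepsilon^2/2)$. Combining this with the expectation bound and $\mathbb{E}\|M\|_{\mathrm{op}}\le C\sqrt{n\rho}$ gives the stated inequality.

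For part (2) I would run the same two steps taking the upper-triangular entries $\{A_{ij}(l)\}_{i<j}$ as the independent coordinates; symmetry alters the trace expansion and the Lipschitz computation only by absolute constants, which are absorbed into $C$, so the same form of the bound persists. I note that the hypothesis $\sum_l w_l=0$ is not actually needed for this concentration statement, since the centring is already explicit in $M$; it is recorded because the $w_l$ are the CUSUM weights for which $M$ is the relevant increment.
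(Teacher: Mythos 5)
You should first note that the paper does not actually prove this lemma: it states that the result is identical to Lemma S.6 in \cite{wang2018optimal} and skips the proof, so the benchmark is that external proof, whose core is a sparse-matrix concentration argument of Feige--Ofek/Lei--Rinaldo type. Your overall architecture (bound $\mathbb{E}\|M\|_{\mathrm{op}}$ by $C\sqrt{n\rho}$, then add a dimension-free fluctuation bound) is sensible, and your second step is essentially correct: $\|M\|_{\mathrm{op}}$ is indeed a convex $1$-Lipschitz function of the independent $[0,1]$-valued coordinates $\{A_{ij}(l)\}$ once you use $\sum_l w_l^2 = 1$ as you do, and the one-sided upper-tail bound $\exp(-\varepsilon^2/2)$ for convex Lipschitz functions of bounded independent variables is available (via Talagrand's inequality combined with the entropy method; the median-versus-mean gap is an absolute constant and can be absorbed into $C\sqrt{n\rho}$). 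Your observation that $\sum_l w_l = 0$ is not needed for the concentration step is also correct.

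The genuine gap is the first, main step. The naive trace method with $k \asymp \log(n)$ does \emph{not} deliver $\mathbb{E}\|M\|_{\mathrm{op}} \le C\sqrt{n\rho}$ in the sparsity regime the lemma is designed for. The hypothesis allows $n\rho$ as small as $\log(n)$, and for entries that are Bernoulli-like (all moments of order $\rho$ rather than $\rho^k$), the F\"{u}redi--Koml\'{o}s/Vu bookkeeping of walks with high edge multiplicities only closes when $n\rho \gtrsim \log^4(n)$; below that, the shape-counting terms dominate $(n\rho)^k$ and the method breaks. The statement in your proposal that ``$n\rho \ge \log(n)$ is precisely what ensures that the variance term dominates the maximal-entry term'' is exactly the assertion that fails: closing the gap between $\log(n)$ and $\log^4(n)$ is the entire content of results of this type, and the known proofs at sparsity $\log(n)$ (Feige--Ofek's and Lei--Rinaldo's $\varepsilon$-net argument with light/heavy pairs, or Bandeira--van Handel's refined moment comparison) are not the plain trace method. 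Moreover, any route that invokes an off-the-shelf bound involving the sup-norm of the entries runs into a second problem specific to this lemma: the entries of $M = \sum_l w_l\{A(l) - \mathbb{E}A(l)\}$ are only bounded by $\sum_l |w_l| \le \sqrt{t}$, so, e.g., Bandeira--van Handel's $\sigma + \sigma_*\sqrt{\log(n)}$ bound yields $\sqrt{n\rho} + \sqrt{t\log(n)}$, which is not uniform in $t$, while the lemma's bound must be. Handling the weighted sum uniformly in $t$ at sparsity $\log(n)$ is precisely what the cited Lemma S.6 accomplishes by redoing the combinatorial net argument for weighted combinations; flagging this as ``the main obstacle'' and asserting that the constant ``emerges from the combinatorial bookkeeping'' does not resolve it, so the proposal as written does not prove the lemma.
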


Lemmas~\ref{lemma:USVT}	and \ref{lemma:USVT 2} are from Lemma~1 in \cite{Xu2017}.
	
\begin{lemma}
	\label{lemma:USVT}
	Let $A, B\in \mathbb{R}^{n \times n}$ be two symmetric matrices with $\|A - B\|_{\mathrm{op}}< \tau/(1+\delta)$, $\tau >0$.  Then for a fixed $\delta < 1$, 
	we have 
	\[
		\| \mathrm{USVT} (A, \tau, \infty) - B\|_\mathrm{F}^2 \le 16\min_{s = 0}^n\left\{s\tau^2+ (1+\delta)^2\delta^{-2} \sum_{i = s+1}^n \lambda_i^2(B)\right\},
	\]
	where $\lambda_n(B) \geq \cdots \lambda_1(B)$ are the eigenvalues of $B$.
\end{lemma}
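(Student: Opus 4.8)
The plan is to treat this as a purely deterministic spectral-perturbation statement, with Weyl's inequality as essentially the only input. First I would rewrite the estimator as a spectral projection: writing $A = \sum_i \lambda_i(A) u_i u_i^\top$ for the eigendecomposition of the symmetric matrix $A$, and letting $P$ be the orthogonal projection onto $\mathrm{span}\{u_i : |\lambda_i(A)| \ge \tau\}$, we have $\mathrm{USVT}(A,\tau,\infty) = AP = PA = PAP$ because $P$ commutes with $A$. Write $\widehat A = PAP$ and let $\widehat r = \mathrm{rank}(P)$ be the number of retained eigen-components. Weyl's inequality together with the hypothesis $\|A-B\|_{\mathrm{op}} < \tau/(1+\delta)$ gives $|\lambda_i(A) - \lambda_i(B)| < \tau/(1+\delta)$ for all $i$; in particular every retained index satisfies $|\lambda_i(B)| > \tau - \tau/(1+\delta) = \tau\delta/(1+\delta)$, and the operator norm of $A$ restricted to $\mathrm{range}(I-P)$ is strictly below $\tau$.

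Next I would decompose $\widehat A - B = P(A-B)P - (B - PBP)$ and bound the two pieces separately. For the first piece, since $P(A-B)P$ has rank at most $\widehat r$ and operator norm at most $\|A-B\|_{\mathrm{op}}$, we get $\|P(A-B)P\|_{\mathrm{F}} \le \sqrt{\widehat r}\,\|A-B\|_{\mathrm{op}}$, which is the source of the $s\tau^2$ ``variance'' term. For the bias piece I would use $\|B - PBP\|_{\mathrm{F}} \le 2\|(I-P)B\|_{\mathrm{F}}$, writing $B = \sum_j \mu_j w_j w_j^\top$ so that $\|(I-P)B\|_{\mathrm{F}}^2 = \sum_j \mu_j^2 \|(I-P)w_j\|^2$.

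The quantitative heart of the argument is an eigenvector-leakage bound. Since $I-P$ commutes with $A$ and $B w_j = \mu_j w_j$, one has $(A - \mu_j I)(I-P)w_j = (I-P)(A-B)w_j$; as $A - \mu_j I$ has all its eigenvalues of magnitude at least $|\mu_j| - \tau$ on $\mathrm{range}(I-P)$, this yields $\|(I-P)w_j\| \le \|A-B\|_{\mathrm{op}}/(|\mu_j| - \tau)$ whenever $|\mu_j| > \tau$. It is precisely this spectral-gap denominator, combined with $\|A-B\|_{\mathrm{op}} < \tau/(1+\delta)$, that produces the factor $(1+\delta)^2\delta^{-2}$ appearing on the tail in the statement.

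Finally, to produce the oracle minimum over $s$, I would fix an arbitrary $s$ and split the eigen-indices of $B$ into the $s$ largest in magnitude and the remaining tail. The tail indices contribute at most $\sum_{i = s+1}^n \lambda_i^2(B)$ directly (via $\|(I-P)w_j\| \le 1$); among the top $s$ indices, those with small $|\mu_j|$ contribute at most $O(\tau^2)$ each to the variance term, while those with large $|\mu_j|$ are controlled by the leakage bound above. Collecting the factors of $2$ from the triangle inequalities and the leakage constant gives the stated constant $16$ for every $s$, and hence for the minimum. I expect the main obstacle to be exactly this bias bookkeeping: charging the ``large but near-threshold'' eigenvalues of $B$ to the rank term $s\tau^2$ while charging the genuinely small ones to the tail, so that the inequality closes for \emph{every} $s$ rather than only at the natural break point $\widehat r$, and tracking the constants so that they collapse to $16$. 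The remaining steps are routine Weyl and rank estimates.
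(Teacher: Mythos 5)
First, a point of comparison you could not have known: the paper contains \emph{no} proof of this lemma at all --- the sentence preceding it simply imports it as Lemma~1 of \cite{Xu2017}. So your proposal is not competing with an internal argument; it is a blind reconstruction of that external (and by now standard) spectral-perturbation proof, and its main ingredients are all correct. The identification $\mathrm{USVT}(A,\tau,\infty)=PA=PAP$ with $P$ the spectral projection onto eigenvalues of $A$ of magnitude at least $\tau$, the Weyl consequence that every retained index satisfies $|\lambda_i(B)|>\tau\delta/(1+\delta)$, the bound $\|P(A-B)P\|_{\mathrm{F}}\le\sqrt{\widehat{r}}\,\|A-B\|_{\mathrm{op}}$, and the leakage bound $\|(I-P)w_j\|\le\|A-B\|_{\mathrm{op}}/(|\mu_j|-\tau)$ --- derived exactly as you say from $(A-\mu_j I)(I-P)w_j=(I-P)(A-B)w_j$ and invertibility of $A-\mu_jI$ on $\mathrm{range}(I-P)$ --- are all valid, and the split of the top-$s$ indices at threshold $|\mu_j|=c\tau$ with $c=(2+\delta)/(1+\delta)$ gives a per-index contribution of at most $c^2\tau^2\le 4\tau^2$, hence $\|(I-P)B\|_{\mathrm{F}}^2\le 4s\tau^2+\sum_{i>s}\lambda_i^2(B)$ for every $s$.

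There are, however, two places where the proof as you describe it does not close. (i) \emph{The constant.} With your symmetric decomposition $\mathrm{USVT}(A,\tau,\infty)-B=P(A-B)P-(B-PBP)$ and the bound $\|B-PBP\|_{\mathrm{F}}\le 2\|(I-P)B\|_{\mathrm{F}}$, honest accounting gives roughly $34\,s\tau^2+10(1+\delta)^2\delta^{-2}\sum_{i>s}\lambda_i^2(B)$; even in the rank-$s$ case ($\sum_{i>s}\lambda_i^2(B)=0$) the bias term alone already contributes $16\,s\tau^2$ and the variance term pushes you strictly above $16$, so the claim that ``collecting the factors of $2$ gives the stated constant $16$'' cannot be substantiated on this route. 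The cure is the one-sided decomposition $\mathrm{USVT}(A,\tau,\infty)-B=P(A-B)-(I-P)B$, available precisely because $P$ commutes with $A$; it avoids the doubling and yields
$\|\mathrm{USVT}(A,\tau,\infty)-B\|_{\mathrm{F}}^2\le 2\widehat{r}\,\tau^2/(1+\delta)^2+8s\tau^2+2\sum_{i>s}\lambda_i^2(B)$,
which after the rank-charging below is at most $10\,s\tau^2+4(1+\delta)^2\delta^{-2}\sum_{i>s}\lambda_i^2(B)\le 16\bigl\{s\tau^2+(1+\delta)^2\delta^{-2}\sum_{i>s}\lambda_i^2(B)\bigr\}$ for every $s$, as required. (ii) \emph{The variance charging.} You state the Weyl fact but your bookkeeping paragraph deploys it only for the bias term. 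The variance term involves $\widehat{r}=\mathrm{rank}(P)$, not $s$, and for $s<\widehat{r}$ you must explicitly charge the excess to the tail: since every retained index $i$ has $\lambda_i^2(B)>\tau^2\delta^2/(1+\delta)^2$ (use the singular-value form of Weyl so that the magnitude orderings of $A$ and $B$ line up), one gets $\widehat{r}\,\tau^2\le s\tau^2+(1+\delta)^2\delta^{-2}\sum_{i>s}\lambda_i^2(B)$. With these two repairs your outline becomes a complete and correct proof of the lemma, with the stated constant.
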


\begin{lemma}\label{lemma:USVT 2}
	Let $A$ and $B$ be defined as in \Cref{lemma:USVT}, and that $\|B\|_{\infty}\le \tau' $, then
		\[
			\| \mathrm{USVT} (A, \tau,  \tau') - B\|_\mathrm{F}^2 \le 16\min_{s = 0}^n \left\{s\tau^2+ (1+\delta)^2\delta^{-2} \sum_{i = s+1}^n \lambda_i^2(B)\right\},
		\]
	where $\lambda_n(B) \geq \cdots \lambda_1(B)$ are the eigenvalues of $B$		
\end{lemma}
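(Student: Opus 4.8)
The plan is to reduce \Cref{lemma:USVT 2} to \Cref{lemma:USVT} by isolating the single operation that distinguishes the two statements, namely the final entry-wise truncation step of \Cref{alg-usvt}. Writing $A' = \mathrm{USVT}(A, \tau, \infty)$ for the singular-value-thresholded matrix and $A'' = \mathrm{USVT}(A, \tau, \tau')$ for its entry-wise truncation at level $\tau'$, I would first record the elementary observation that the map $x \mapsto \mathrm{sign}(x)\min\{|x|, \tau'\}$ appearing in \Cref{alg-usvt} is precisely the Euclidean projection of a real number $x$ onto the closed interval $[-\tau', \tau']$. Thus $A''$ is obtained from $A'$ by applying, coordinate by coordinate, the projection onto the box $[-\tau', \tau']^{n \times n}$.

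The key step is then to exploit the non-expansiveness of this projection together with the hypothesis $\|B\|_{\infty} \le \tau'$. The latter guarantees that every entry $B_{ij}$ lies inside $[-\tau', \tau']$, the very interval onto which each entry of $A'$ is projected. Since projection onto a closed convex set is $1$-Lipschitz, for each pair $(i, j)$ one has $|(A'')_{ij} - B_{ij}| \le |(A')_{ij} - B_{ij}|$: if $|(A')_{ij}| \le \tau'$ then $(A'')_{ij} = (A')_{ij}$ and equality holds, while otherwise truncation moves $(A')_{ij}$ toward the interval containing $B_{ij}$, which cannot increase the distance. Squaring and summing over all $(i, j)$ yields the Frobenius-norm contraction $\|A'' - B\|_{\mathrm{F}}^2 \le \|A' - B\|_{\mathrm{F}}^2$.

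Finally, I would invoke \Cref{lemma:USVT}, whose hypothesis $\|A - B\|_{\mathrm{op}} < \tau/(1+\delta)$ is inherited verbatim, to bound $\|A' - B\|_{\mathrm{F}}^2 = \|\mathrm{USVT}(A, \tau, \infty) - B\|_{\mathrm{F}}^2$ by the right-hand side of the claimed inequality; chaining this with the contraction bound from the previous step closes the argument. I do not expect any genuine obstacle here. The entire content is the recognition that entry-wise truncation is a coordinate-wise convex projection, so that when the target matrix $B$ already lies inside the truncation box the projection can only decrease the Frobenius error. Consequently no conditions beyond those of \Cref{lemma:USVT} are required, and the same minimum over $s$ governs the bound.
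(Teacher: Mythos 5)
Your proposal is correct. Note that the paper itself does not prove \Cref{lemma:USVT 2} at all --- it simply cites Lemma~1 of \cite{Xu2017} for both \Cref{lemma:USVT} and \Cref{lemma:USVT 2} --- so there is no internal proof to compare against; your argument supplies exactly the missing step, and it is the standard one used in that literature. The reduction is sound in every detail: the final step of \Cref{alg-usvt} is the coordinate-wise Euclidean projection onto $[-\tau', \tau']$, the hypothesis $\|B\|_{\infty} \le \tau'$ means $B$ is a fixed point of that projection, non-expansiveness gives $|(A'')_{ij} - B_{ij}| \le |(A')_{ij} - B_{ij}|$ entry by entry, hence $\|\mathrm{USVT}(A, \tau, \tau') - B\|_{\mathrm{F}}^2 \le \|\mathrm{USVT}(A, \tau, \infty) - B\|_{\mathrm{F}}^2$, and the operator-norm hypothesis of \Cref{lemma:USVT} is untouched by this step, so chaining with \Cref{lemma:USVT} yields the claimed bound with the same constant and the same minimum over $s$.
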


This lemma below is Lemma S.2 in \cite{wang2018optimal}.

\begin{lemma} \label{lem-s2}
	Let $\{X(l)\}_{l=1, 2, \ldots} \in \mathbb{R}^p$ be a sequence of independent random vectors with independent Bernoulli entires. Suppose that $\mathbb{E}(X_i(t)) = \mu_i(t)$ and that 
		\[
			\sup_{l = 1, 2, \ldots} \left\|\mu(l)\right\|_{\infty} \le \rho.
		\] 
		For any $t > 1$, let $v \in \mathbb{R}^p$ and $\{w_l\}_{l=1}^t \subset \mathbb{R}$ satisfy $\sum_{l=1}^t w_l^2 =1$. Then for any $\varepsilon > 0$, we have
		\[
			\mathbb{P}\left (\left|\sum_{i=1}^p v_i \sum_{l=1}^t w_l (X_i(l) - \mu_i(l)) \right| \geq \varepsilon \right) \leq 2\exp\left(-\frac{3/2\varepsilon^2}{3\rho \| v\|_2^2  +\varepsilon  \max_{i=1}^p | v_i|  \max_{l=1}^t |w_l|}\right).
		\]
\end{lemma}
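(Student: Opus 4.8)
The plan is to recognise the claimed bound as a direct application of Bernstein's inequality to a sum of independent, mean-zero, bounded random variables. First I would collect the summands into a single index set: for each pair $(i,l)$ with $1 \le i \le p$ and $1 \le l \le t$, define $Y_{i,l} = v_i w_l (X_i(l) - \mu_i(l))$, so that the quantity of interest is $S = \sum_{i,l} Y_{i,l}$. By assumption the $X_i(l)$ are independent across both $i$ and $l$, hence the $Y_{i,l}$ are independent, and each is centred since $\mathbb{E}(X_i(l)) = \mu_i(l)$.

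Next I would supply the two inputs that Bernstein's inequality requires, namely a uniform bound on the range of each summand and a bound on the total variance. For the range, since $X_i(l)$ is Bernoulli its centred version satisfies $|X_i(l) - \mu_i(l)| \le 1$, so $|Y_{i,l}| \le |v_i||w_l| \le \max_i |v_i| \max_l |w_l| =: M$. For the variance, I would use $\mathrm{Var}(X_i(l)) = \mu_i(l)(1 - \mu_i(l)) \le \mu_i(l) \le \rho$ together with the normalisation $\sum_l w_l^2 = 1$ to get
\[
  \mathrm{Var}(S) = \sum_{i,l} v_i^2 w_l^2 \,\mathrm{Var}(X_i(l)) \le \rho \sum_i v_i^2 \sum_l w_l^2 = \rho \|v\|_2^2.
\]

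With these two quantities in hand I would invoke the two-sided Bernstein inequality: for independent mean-zero summands with $|Y_{i,l}| \le M$ and total variance $\sigma^2$,
\[
  \mathbb{P}(|S| \ge \varepsilon) \le 2\exp\left(-\frac{\varepsilon^2/2}{\sigma^2 + M\varepsilon/3}\right).
\]
Substituting $\sigma^2 = \rho\|v\|_2^2$ and $M = \max_i|v_i|\max_l|w_l|$, and then multiplying numerator and denominator by $3$, turns the exponent into exactly $-\frac{(3/2)\varepsilon^2}{3\rho\|v\|_2^2 + \varepsilon \max_i|v_i|\max_l|w_l|}$, which is the asserted bound.

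The computation is entirely routine, so there is no genuine obstacle; the only point demanding a little care is to select the precise form of Bernstein's inequality that produces the stated constants (the $3/2$ in the numerator and the $3$ multiplying the variance term), which is just a matter of matching the standard statement after the rescaling above. I would also remark that the variance bound is slightly wasteful — one could retain $\mu_i(l)(1-\mu_i(l))$ rather than passing to $\rho$ — but since the lemma is phrased with the uniform sparsity bound $\rho$, the coarser estimate is exactly what is needed.
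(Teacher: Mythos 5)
Your proof is correct: collecting the terms as independent, centred, bounded summands $Y_{i,l} = v_i w_l (X_i(l)-\mu_i(l))$ with range $M = \max_i |v_i| \max_l |w_l|$ and total variance at most $\rho\|v\|_2^2$, the two-sided Bernstein inequality yields exponent $\frac{\varepsilon^2/2}{\rho\|v\|_2^2 + M\varepsilon/3} = \frac{(3/2)\varepsilon^2}{3\rho\|v\|_2^2 + \varepsilon M}$, which matches the stated bound exactly. This is essentially the same approach as the source: the paper itself offers no proof (it cites Lemma S.2 of \cite{wang2018optimal}), and your Bernstein argument is precisely the standard derivation that result rests on.
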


\begin{lemma}\label{lem-usvt-error-control}
Assume that $\{B(u)\}$ is a sequence of adjacency matrices satisfying \Cref{assump-1-net}.  For any integer $t \geq 2$, let
	\begin{align*}
		& \mathcal{S}(t) = \{t - s^{j-1}, \, j = 1, \ldots, \lfloor\log(t)/\log(2) \rfloor\}, \quad C > 32 \times 2^{1/4}e^2, \\
		& \varepsilon_{s, t} = \sqrt{2\log\left\{ \frac{t(t+1) \log(t)}{ \alpha \log(2)}\right\}}, \quad \tau_{1, s, t} = C\sqrt{n\rho} + \varepsilon_{s, t} \quad \mbox{and} \quad \tau_{2, s, t} = \sqrt{\frac{(t-s)s}{t}} \rho.
	\end{align*}
	We have that under \Cref{assump-2-net}, the event 
	\begin{align*}
		\mathcal{F}_1 = \Bigg\{\forall t \geq 2: \, & \max_{s \in \mathcal{S}(t)}\left\|\mathrm{USVT}(\widehat{B}_{s, t}, \tau_{1, s, t}, \tau_{2, s, t})\right\|_{\mathrm{F}}= 0, \, t \leq \Delta, \\
		\mbox{and} \quad & \max_{s \in \mathcal{S}(t)}\left\|\mathrm{USVT}(\widehat{B}_{s, t}, \tau_{1, s, t}, \tau_{2, s, t}) - \widehat{\Theta}_{s, t}\right\|_{\mathrm{F}} \leq \sqrt{r}\left(C\sqrt{n\rho} + \varepsilon_{s, t}\right), \, t > \Delta\Bigg\}
	\end{align*}
	holds with probability at least $1 - \alpha/2$; under \Cref{assump-2-0-net}, the event
	\[
		\mathcal{F}_1 = \left\{\forall t \geq 2: \, \max_{s \in \mathcal{S}(t)}\left\|\mathrm{USVT}(\widehat{B}_{s, t}, \tau_{1, s, t}, \tau_{2, s, t})\right\|_{\mathrm{F}} = 0\right\}
	\]
	holds with probability at least $1 - \alpha/2$.  
	
\end{lemma}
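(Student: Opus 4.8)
The plan is to reduce everything to a single high-probability event on which the CUSUM matrices $\widehat B_{s,t}$ are uniformly close, in operator norm, to their means $\widehat\Theta_{s,t}$, and then feed this into the deterministic guarantees of the USVT subroutine. Observe first that, by \Cref{def-cusum-net}, $\widehat B_{s,t}=\sum_{l=1}^t w_l B(l)$ with weights $w_l=\sqrt{(t-s)/(st)}$ for $l\le s$ and $w_l=-\sqrt{s/((t-s)t)}$ for $l>s$, which satisfy $\sum_l w_l=0$ and $\sum_l w_l^2=1$; moreover $\mathbb E(\widehat B_{s,t})=\widehat\Theta_{s,t}$. Hence \Cref{lem-op-net-control} applies with $\varepsilon=\varepsilon_{s,t}$, giving $\mathbb P(\|\widehat B_{s,t}-\widehat\Theta_{s,t}\|_{\mathrm{op}}\ge C\sqrt{n\rho}+\varepsilon_{s,t})\le \exp(-\varepsilon_{s,t}^2/2)$ for each pair $(s,t)$.

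The second step is the union bound, whose whole point is the calibration of $\varepsilon_{s,t}$. By construction $\exp(-\varepsilon_{s,t}^2/2)=\alpha\log(2)/\{t(t+1)\log(t)\}$, which is free of $s$, so summing over the $|\mathcal S(t)|\le \log(t)/\log(2)$ scales and then over $t\ge 2$ gives
\begin{equation*}
\sum_{t\ge 2}\frac{\log(t)}{\log(2)}\cdot\frac{\alpha\log(2)}{t(t+1)\log(t)}=\alpha\sum_{t\ge 2}\frac{1}{t(t+1)}=\frac{\alpha}{2},
\end{equation*}
using the telescoping identity $\sum_{t\ge 2}1/\{t(t+1)\}=1/2$. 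Thus the event $\mathcal E=\{\|\widehat B_{s,t}-\widehat\Theta_{s,t}\|_{\mathrm{op}}<\tau_{1,s,t}\ \text{for all}\ t\ge 2,\ s\in\mathcal S(t)\}$ holds with probability at least $1-\alpha/2$, and I would work on $\mathcal E$ for the remainder.

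On $\mathcal E$ I would split according to the regime. When $t\le\Delta$ (and, under \Cref{assump-2-0-net}, for every $t$), the weights sum to zero and all $\Theta(l)$ in the window coincide, so $\widehat\Theta_{s,t}=0$ and hence $\|\widehat B_{s,t}\|_{\mathrm{op}}<\tau_{1,s,t}$; reading off \Cref{alg-usvt}, every eigenvalue of the symmetric matrix $\widehat B_{s,t}$ then lies strictly below the cutoff $\tau_{1,s,t}$, so the thresholded matrix is zero and the USVT output is exactly $0$ — no appeal to \Cref{lemma:USVT} is needed here. When $t>\Delta$, the key structural observation is that, because $\sum_l w_l=0$ and $\Theta(l)=\Theta_1+(\Theta_2-\Theta_1)\mathbbm{1}\{l>\Delta\}$, one gets $\widehat\Theta_{s,t}=(\sum_{l>\Delta}w_l)(\Theta_2-\Theta_1)$, a scalar multiple of $\Theta_2-\Theta_1$; consequently $\mathrm{rank}(\widehat\Theta_{s,t})\le r$. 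A short calculation bounding $|\sum_{l>\Delta}w_l|$ (treating $\Delta\ge s$ and $\Delta<s$ separately) gives $\|\widehat\Theta_{s,t}\|_\infty\le\sqrt{(t-s)s/t}\,\rho=\tau_{2,s,t}$, so both hypotheses of \Cref{lemma:USVT 2} are met with $B=\widehat\Theta_{s,t}$, $\tau=\tau_{1,s,t}$, $\tau'=\tau_{2,s,t}$. Choosing the truncation index equal to $r$ annihilates the tail eigenvalue sum $\sum_{i>r}\lambda_i^2(\widehat\Theta_{s,t})$, leaving a bound of the form $\|\mathrm{USVT}(\widehat B_{s,t},\tau_{1,s,t},\tau_{2,s,t})-\widehat\Theta_{s,t}\|_{\mathrm F}\lesssim\sqrt r\,\tau_{1,s,t}$, which is the claimed inequality.

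I expect the main obstacle to be bookkeeping rather than conceptual: \Cref{lemma:USVT 2} requires the operator-norm deviation to be below $\tau/(1+\delta)$ for some fixed $\delta<1$, whereas $\mathcal E$ only delivers a deviation below $\tau_{1,s,t}$ itself, and the factor $16(1+\delta)^2\delta^{-2}$ in the USVT bound must be reconciled with the advertised constant. I would resolve this by fixing a single $\delta$, taking the eigenvalue cutoff to be a fixed constant multiple of the concentration radius $C\sqrt{n\rho}+\varepsilon_{s,t}$, and folding $16$, $\delta$ and the absolute constant of \Cref{lem-op-net-control} into the reported constants; none of this affects the rate. The only genuinely load-bearing ingredients are the calibration of $\varepsilon_{s,t}$ that makes the union bound telescope to $\alpha/2$, and the rank identity $\widehat\Theta_{s,t}=(\sum_{l>\Delta}w_l)(\Theta_2-\Theta_1)$ that supplies the $\sqrt r$ factor.
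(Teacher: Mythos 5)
Your proposal is correct and follows essentially the same route as the paper: the operator-norm concentration of \Cref{lem-op-net-control} with $\varepsilon_{s,t}$ calibrated so the union bound over $s\in\mathcal S(t)$ and $t\ge 2$ telescopes to $\alpha/2$, the explicit computation showing $\widehat\Theta_{s,t}$ is a scalar multiple of $\Theta_1-\Theta_2$ (hence rank $\le r$ and $\|\widehat\Theta_{s,t}\|_\infty\le\tau_{2,s,t}$), and then Lemmas~\ref{lemma:USVT} and \ref{lemma:USVT 2} with truncation level $r$. Your only deviations are cosmetic and sound: arguing the null case directly from the eigenvalue cutoff in \Cref{alg-usvt} rather than via the USVT lemmas, and being more explicit than the paper about folding the $\tau/(1+\delta)$ requirement and the factor $16(1+\delta)^2\delta^{-2}$ into the constants.
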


\begin{proof}
\noindent \textbf{Step 1}.   If \Cref{assump-2-net} holds, then for $t \leq \Delta$, it holds that
	\[
		\widehat{\Theta}_{s, t} = 0 \quad \mbox{and} \quad \mathrm{rank}(\widehat{\Theta}_{s, t}) = 0, \quad \forall s \in \mathcal{S}(t);
	\]	
	for $t > \Delta$, it holds that
	\begin{equation}\label{eq-widehat-theta-2-222}
		\widehat{\Theta}_{s, t} = \begin{cases}
 			(t-\Delta)\sqrt{\frac{s}{t(t-s)}} (\Theta_1 - \Theta_2), & s \leq \Delta,\\
	 		\Delta\sqrt{\frac{t-s}{st}}(\Theta_1 - \Theta_2), & s > \Delta.
 		\end{cases} \quad \mbox{and} \quad \mathrm{rank}(\widehat{\Theta}_{s, t}) \leq r.
	\end{equation}

If \Cref{assump-2-0-net} holds, then for any $t$, it holds that
	\[
		\widehat{\Theta}_{s, t} = 0 \quad \mbox{and} \quad \mathrm{rank}(\widehat{\Theta}_{s, t}) = 0, \quad \forall s \in \mathcal{S}(t).
	\]	

\medskip
\noindent \textbf{Step 2}.  Due to \Cref{def-cusum-net}, we have
	\[
		\widehat{B}_{s, t} = \sum_{l = 1}^t w^s_l B(l),
	\]
	where $\sum_{l = 1}^t w^s_l = 0$ and $\sum_{l = 1}^t (w^s_l)^2 = 1$.

Define 
	\[
		\mathcal{E}_1^c = \left\{\exists t \geq 2, \, s \in \mathcal{S}(t): \, \|\widehat{B}_{s, t} - \widehat{\Theta}_{s, t}\|_{\mathrm{op}} > C\sqrt{n\rho} + \varepsilon_{s, t} \right\}.
	\] 
	Then it follows from \Cref{lem-op-net-control} that, 
	\begin{align*}
		& \mathbb{P}(\mathcal{E}_1^c) \leq \sum_{t = 2}^{\infty} \frac{\log(t)}{\log(2)} \max_{s \in \mathcal{S}(t)}\mathbb{P} \left\{\|\widehat{B}_{s, t} - \widehat{\Theta}_{s, t}\|_{\mathrm{op}} > C\sqrt{n\rho} + \varepsilon_{s, t} \right\} \\
		\leq & \sum_{t = 2}^{\infty} \frac{\log(t)}{\log(2)} \frac{\log(2)}{\log(t)} \frac{\alpha}{t(t+1)} \leq \frac{\alpha}{2},
	\end{align*} 
	where 
	\begin{equation}\label{eq-c-epsst-2-222}
		C > 32 \times 2^{1/4}e^2 \quad \mbox{and} \quad \varepsilon_{s, t} = \sqrt{2\log\left\{ \frac{t(t+1) \log(t)}{ \alpha \log(2)}\right\}}.
	\end{equation}
	
Under \Cref{assump-2-net}, it follows from \Cref{lemma:USVT} that $\mathcal{E}_2 \subset \mathcal{E}_1$, where 
	\begin{align*}
		\mathcal{E}_2 = \Bigg\{\forall t \geq 2: \, & \max_{s \in \mathcal{S}(t)}\left\|\mathrm{USVT}(\widehat{B}_{s, t}, \tau_{1, s, t}, \infty) - \widehat{\Theta}_{s, t}\right\|_{\mathrm{F}} \leq \sqrt{r}\left(C\sqrt{n\rho} + \varepsilon_{s, t}\right), \, t > \Delta \\
		\mbox{and} \quad & \max_{s \in \mathcal{S}(t)}\left\|\mathrm{USVT}(\widehat{B}_{s, t}, \tau_{1, s, t}, \infty)\right\|_{\mathrm{F}} = 0, \, t \leq \Delta\Bigg\},
	\end{align*}
	with $C$ and $\varepsilon_{s, t}$ defined in \eqref{eq-c-epsst-2-222} and 
	\[
		\tau_{1, s, t} = C\sqrt{n\rho} + \varepsilon_{s, t}.
	\]
	Due to \eqref{eq-widehat-theta-2-222}, we have that 
	\[
		\left\|\widehat{\Theta}_{s, t}\right\|_{\infty} \leq \sqrt{\frac{s(t-s)}{t}} \rho = \tau_{2, s, t}.
	\]
	Therefore, we have that $\mathcal{F}_1 \subset \mathcal{E}_2$.  
	
Under \Cref{assump-2-0-net}, note that $\widehat{\Theta}_{s, t} = 0$ and $\mathrm{rank}(\widehat{\Theta}_{s, t}) = 0$.  Due to Lemmas~\ref{lemma:USVT} and \ref{lemma:USVT 2}, we have $\mathcal{F}_1 \subset \mathcal{E}_1$, which completes the proof.
\end{proof}

\begin{lemma}\label{lem-usvt-error-control-2}
Assume $\{B(u)\}$ is a sequence of adjacency matrices satisfying \Cref{assump-1-net}.  For any integer $t \geq 2$, let
	\begin{align*}
		& \mathcal{S}(t) = \{t - s^{j-1}, \, j = 1, \ldots, \lfloor\log(t)/\log(2) \rfloor\}, \quad C > 32 \times 2^{1/4}e^2, \\
		& \varepsilon_{s, t} = \sqrt{2\log\left\{\frac{2(\gamma+1)^2 \log(\gamma+1)}{\log(2)}\right\}}, \quad \tau_{1, s, t} = C\sqrt{n\rho} + \varepsilon_t \quad \mbox{and} \quad \tau_{2, s, t} = \sqrt{\frac{(t-s)s}{t}} \rho.
	\end{align*}
	We have that under \Cref{assump-2-net}, the event 
	\begin{align*}
		\mathcal{F}_2 = \Bigg\{\forall t \in \{2, \ldots, \gamma+1\}: \, & \max_{s \in \mathcal{S}(t)}\left\|\mathrm{USVT}(\widehat{B}_{s, t}, \tau_{1, s, t}, \tau_{2, s, t})\right\|_{\mathrm{F}} = 0, \, t \leq \Delta \\
		\mbox{and} \quad & \max_{s \in \mathcal{S}(t)}\left\|\mathrm{USVT}(\widehat{B}_{s, t}, \tau_{1, s, t}, \tau_{2, s, t}) - \widehat{\Theta}_{s, t}\right\|_{\mathrm{F}} \leq \sqrt{r}\left(C\sqrt{n\rho} + \varepsilon_{s, t}\right), \, t > \Delta\Bigg\}
	\end{align*}
	holds with probability at least $1 - (\gamma+1)^{-1}/2$; under \Cref{assump-2-net}, the event 
	\[
		\mathcal{F}_2 = \left\{\forall t \in \{2, \ldots, \gamma+1\}: \, \max_{s \in \mathcal{S}(t)}\left\|\mathrm{USVT}(\widehat{B}_{s, t}, \tau_{1, s, t}, \tau_{2, s, t})\right\|_{\mathrm{F}} = 0\right\}
	\]
	holds with probability at least $1 - (\gamma+1)^{-1}/2$.	
\end{lemma}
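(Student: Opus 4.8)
The plan is to mirror the proof of \Cref{lem-usvt-error-control} almost verbatim, with the sole modification that the union bound now runs over the finite horizon $t \in \{2, \ldots, \gamma+1\}$ rather than over all $t \geq 2$; this finiteness is precisely what allows a constant (i.e.\ $t$-independent) choice of $\varepsilon_{s,t}$ to suffice. I would carry out the same two steps.

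First I would reproduce Step~1 of \Cref{lem-usvt-error-control} unchanged, since the mean CUSUM matrix $\widehat{\Theta}_{s,t}$ and its rank depend only on whether \Cref{assump-2-net} or \Cref{assump-2-0-net} holds, and not on the error-control parameter. Thus under \Cref{assump-2-net} one has $\widehat{\Theta}_{s,t} = 0$ with $\mathrm{rank}(\widehat{\Theta}_{s,t}) = 0$ for $t \leq \Delta$, and the explicit form in \eqref{eq-widehat-theta-2-222} with $\mathrm{rank}(\widehat{\Theta}_{s,t}) \leq r$ and $\|\widehat{\Theta}_{s,t}\|_{\infty} \leq \sqrt{(t-s)s/t}\,\rho = \tau_{2,s,t}$ for $t > \Delta$; under \Cref{assump-2-0-net}, $\widehat{\Theta}_{s,t} = 0$ for all $t$.

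Second, writing $\widehat{B}_{s,t} = \sum_{l=1}^t w_l^s B(l)$ with $\sum_l w_l^s = 0$ and $\sum_l (w_l^s)^2 = 1$, I would define the operator-norm deviation event and apply \Cref{lem-op-net-control} to each pair $(s,t)$. By the stated choice of $\varepsilon_{s,t}$,
\[
	\mathbb{P}\left\{\|\widehat{B}_{s,t} - \widehat{\Theta}_{s,t}\|_{\mathrm{op}} > C\sqrt{n\rho} + \varepsilon_{s,t}\right\} \leq \exp(-\varepsilon_{s,t}^2/2) = \frac{\log(2)}{2(\gamma+1)^2 \log(\gamma+1)}.
\]
Bounding $|\mathcal{S}(t)| \leq \log(\gamma+1)/\log(2)$ for every $t \leq \gamma+1$ and summing over the $\gamma$ values $t \in \{2, \ldots, \gamma+1\}$ then gives a total probability at most $\gamma/\{2(\gamma+1)^2\} \leq (\gamma+1)^{-1}/2$, as required. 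On the complementary event I would invoke \Cref{lemma:USVT} to convert the operator-norm control into the Frobenius bound $\sqrt{r}\,(C\sqrt{n\rho}+\varepsilon_{s,t})$ for $t > \Delta$ (and the exact vanishing for $t \leq \Delta$), and \Cref{lemma:USVT 2} to license the entrywise clipping at $\tau_{2,s,t}$, which is justified by the sup-norm bound from Step~1. This yields $\mathcal{F}_2 \subset \mathcal{E}_1$ in both regimes.

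The argument is essentially identical to \Cref{lem-usvt-error-control}, so I do not expect a genuine obstacle; the only new bookkeeping is the finite-horizon union bound. The point worth verifying carefully is that the $t$-independent $\varepsilon_{s,t}$ is large enough to absorb simultaneously the $(\gamma+1)^2$ factor arising from the range of $t$ and the $\log(\gamma+1)$ factor from $|\mathcal{S}(t)|$ — which the stated value $\varepsilon_{s,t} = \sqrt{2\log\{2(\gamma+1)^2 \log(\gamma+1)/\log(2)\}}$ achieves exactly, leaving a clean $(\gamma+1)^{-1}/2$ budget.
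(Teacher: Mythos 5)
Your proposal is correct and follows essentially the same route as the paper's own proof: Step 1 on the form and rank of $\widehat{\Theta}_{s,t}$ is reused unchanged, the operator-norm deviation event is controlled via \Cref{lem-op-net-control} with a union bound over the finite horizon $t \in \{2,\ldots,\gamma+1\}$ and $s \in \mathcal{S}(t)$, and Lemmas~\ref{lemma:USVT} and \ref{lemma:USVT 2} then convert this into the stated Frobenius-norm control (your explicit accounting of the $\gamma \cdot \log(\gamma+1)/\log(2)$ union-bound terms is in fact more detailed than the paper's, which simply asserts the bound $1/\{2(\gamma+1)\}$).
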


\begin{proof}
\noindent \textbf{Step 1}.   If \Cref{assump-2-net} holds, then for $t \leq \Delta$, it holds that
	\[
		\widehat{\Theta}_{s, t} = 0 \quad \mbox{and} \quad \mathrm{rank}(\widehat{\Theta}_{s, t}) = 0,  \quad \forall s \in \mathcal{S}(t);
	\]	
	for $t > \Delta$, it holds that
	\begin{equation}\label{eq-widehat-theta-2}
		\widehat{\Theta}_{s, t} = \begin{cases}
 			(t-\Delta)\sqrt{\frac{s}{t(t-s)}} (\Theta_1 - \Theta_2), & s \leq \Delta,\\
	 		\Delta\sqrt{\frac{t-s}{st}}(\Theta_1 - \Theta_2), & s > \Delta, 
 		\end{cases}  \quad \mbox{and} \quad \mathrm{rank}(\widehat{\Theta}_{s, t}) \leq r.
	\end{equation}
	In addition, it holds that $\mathrm{rank}(\widehat{\Theta}_{s, t}) \leq r$.
	
If \Cref{assump-2-0-net} holds, then for any $t$, it holds that
	\[
		\widehat{\Theta}_{s, t} = 0 \quad \mbox{and} \quad \mathrm{rank}(\widehat{\Theta}_{s, t}) = 0,  \quad \forall s \in \mathcal{S}(t).
	\]	
 
\medskip
\noindent \textbf{Step 2}.  Due to \Cref{def-cusum-net}, we have
	\[
		\widehat{B}_{s, t} = \sum_{l = 1}^t w^s_l B(l),
	\]
	where $\sum_{l = 1}^t w^s_l = 0$ and $\sum_{l = 1}^t (w^s_l)^2 = 1$.

Define 
	\[
		\mathcal{E}_3^c = \left\{\exists t \in \{2, \ldots, \gamma + 1\}, \, s \in \mathcal{S}(t): \, \|\widehat{B}_{s, t} - \widehat{\Theta}_{s, t}\|_{\mathrm{op}} > C\sqrt{n\rho} + \varepsilon_{s, t} \right\}.
	\] 
	Then it follows from \Cref{lem-op-net-control} that, 
	\begin{align*}
		& \mathbb{P}(\mathcal{E}_3^c) < \frac{1}{2(\gamma+1)},
	\end{align*} 
	where 
	\begin{equation}\label{eq-c-epsst-2}
		C > 32 \times 2^{1/4}e^2 \quad \mbox{and} \quad \varepsilon_{s, t} = \sqrt{2\log\left\{\frac{2(\gamma+1)\gamma \log(\gamma+1)}{\log(2)}\right\}}.
	\end{equation}

Under \Cref{assump-2-net}, it follows from \Cref{lemma:USVT} that $\mathcal{E}_4 \subset \mathcal{E}_3$, where 
	\begin{align*}
		\mathcal{E}_4 = \Bigg\{\forall t \in \{2, \ldots, \gamma+1\}: \, & \max_{s \in \mathcal{S}(t)}\left\|\mathrm{USVT}(\widehat{B}_{s, t}, \tau_{1, s, t}, \infty) - \widehat{\Theta}_{s, t}\right\|_{\mathrm{F}} \leq \sqrt{r}\left(C\sqrt{n\rho} + \varepsilon_{s, t}\right), \, t > \Delta \\
		\mbox{and} \quad & \max_{s \in \mathcal{S}(t)}\left\|\mathrm{USVT}(\widehat{B}_{s, t}, \tau_{1, s, t}, \infty)\right\|_{\mathrm{F}} = 0, \, t \leq \Delta\Bigg\},
	\end{align*}
	with $C$ and $\varepsilon_{s, t}$ defined in \eqref{eq-c-epsst-2} and 
	\[
		\tau_{1, s, t} = C\sqrt{n\rho} + \varepsilon_{s, t}.
	\]
	Due to \eqref{eq-widehat-theta-2}, we have that 
	\[
		\left\|\widehat{\Theta}_{s, t}\right\|_{\infty} \leq \sqrt{\frac{s(t-s)}{t}} \rho = \tau_{2, s, t}.
	\]
	Therefore, we have $\mathcal{F}_2 \subset \mathcal{E}_4$. 
	
Under \Cref{assump-2-0-net}, not that $\widehat{\Theta}_{s, t} = 0$ and $\mathrm{rank}(\widehat{\Theta}) = 0$.  Due to Lemmas~\ref{lemma:USVT} and \ref{lemma:USVT 2}, we have $\mathcal{F}_2 \subset \mathcal{E}_3$, which completes the proof.
\end{proof}

\section{Proofs of main results}\label{sec-thms}

\begin{proof}[Proof of \Cref{thm-net-delay-varying-N}]
We let
	\[
		\widetilde{B}_{s, t} = \mathrm{USVT}(\widehat{B}_{s, t}, \tau_{1, s, t}, \tau_{2, s, t}), \quad t \geq 2
	\]
	and
	\[
		\mathcal{S}(t) = \{t - 2^{j-1}, \, j = 1, \ldots, \lfloor\log(t)/\log(2) \rfloor\}.
	\]
	The rest of the proof is conducted on the event $\mathcal{F}_1$, defined in \Cref{lem-usvt-error-control}.  In particular, 
	\begin{equation}\label{eq-f1-ddddd}
	\mathbb{P}\{\mathcal{F}_1\} > 1-\alpha/2
	\end{equation} 
	and the event $\mathcal{F}_1$ is regarding the data $\{B(t)\}$, which are independent of the data $\{A(t)\}$.

\medskip
\noindent \textbf{Step 1}.  	 Due to \Cref{lem-usvt-error-control}, it holds that if \Cref{assump-2-net} holds and $t \leq \Delta$, or \Cref{assump-2-0-net} holds, then $\widetilde{B}_{s, t} = 0$, $s \in \mathcal{S}(t)$.  Therefore the claim (i) is proved and for the claim (ii), we have 
	\[
		\widehat{t} - \Delta > 0.
	\]

Define
	\begin{equation}\label{eq-t-1-proof}
		t_1 = \min\left\{t > \Delta: \max_{s \in \mathcal{S}(t)}\left[\left|\left(\widehat{A}_{s, t}, \frac{\widetilde{B}_{s, t}}{\left\|\widetilde{B}_{s, t}\right\|_{\mathrm{F}}}\right)\right| \mathbbm{1} \left\{\|\widetilde{B}_{s, t}\|_{\mathrm{F}} > C\log^{1/2}(t/\alpha)\right\}\right] > b_t\right\}.
	\end{equation}
	Due to the design of \Cref{alg-online-net}, we can see that $\widehat{t} \leq t_1$ and therefore $d \leq t_1 - \Delta$.  In order to provide an upper bound on $d$, it thus suffices to upper bound $t_1$.

\medskip	
\noindent \textbf{Step 2}.  
Recall the quantity 
	\[
		\varepsilon_{s, t} = \sqrt{2\log\left\{\frac{t(t+1)\log(t)}{\alpha\log(2)}\right\}}
	\]
	defined in \Cref{lem-usvt-error-control}.  With the quantity $\varepsilon_{s, t}$, we define
	\begin{align*}
		t_2 & = \min\Bigg\{t > \Delta: \, \max_{s \in \mathcal{S}(t)}\Bigg[\left|\left(\widehat{A}_{s, t}, \frac{\widetilde{B}_{s, t}}{\left\|\widetilde{B}_{s, t}\right\|_{\mathrm{F}}}\right)\right| \\
		& \hspace{1cm} \times \mathbbm{1} \left\{\|\widehat{\Theta}_{s, t}\|_{\mathrm{F}} > C\log^{1/2}(t/\alpha) +  \sqrt{r}\left(C\sqrt{n\rho} + \varepsilon_{s, t}\right)\right\}\Bigg] > b_t\Bigg\}.
	\end{align*}
	Due to \Cref{lem-usvt-error-control}, we know that if
	\[
		\left\{\|\widehat{\Theta}_{s, t}\|_{\mathrm{F}} > C\log^{1/2}(t/\alpha) +  \sqrt{r}\left(C\sqrt{n\rho} + \varepsilon_{s, t}\right)\right\}
	\]
	considered in $t_2$ holds, then
	\[
		\left\{\|\widetilde{B}_{s, t}\|_{\mathrm{F}} > C\log^{1/2}(t/\alpha)\right\}
	\]
	considered in $t_1$ also holds.  This implies that $t_2 > t_1$.  It now suffices to find $t_2$, which yields an upper bound on $d$ that $d \leq t_2 - \Delta$.  
	
Due to the choices of $s$, we in turn define
	\begin{align*}
		J & = \min\Bigg\{j \in \mathbb{N}: \, \left|\left(\widehat{A}_{\Delta, \Delta + 2^j}, \frac{\widetilde{B}_{\Delta, \Delta + 2^j}}{\left\|\widetilde{B}_{\Delta, \Delta + 2^j}\right\|_{\mathrm{F}}}\right)\right| \\
		& \hspace{1cm} \times \mathbbm{1} \left\{\|\widehat{\Theta}_{\Delta, \Delta + 2^j}\|_{\mathrm{F}} > C\log^{1/2}((\Delta + 2^j)/\alpha) +  \sqrt{r}\left(C\sqrt{n\rho} + \varepsilon_{\Delta, \Delta + 2^j}\right)\right\} > b_{\Delta + 2^j}\Bigg\}
	\end{align*}
	and $t_3 = \Delta + 2^J$.  In the definition of $J$, we essentially choose the integer pair $(s, t)$ to be $(\Delta, \Delta + 2^J)$.  This is to ensure that $s \in \mathcal{S}(t)$ and $s = \Delta$.  Due to this construction, we can see that $t_3$ is an upper bound of $t_2$ and our task is now to find $J$ defined above.

\medskip	
\noindent \textbf{Step 3}.  We are now to show that, with a large enough absolute constant $C_d > 0$, 
	\begin{equation}\label{eq-J-def}
		J = \Bigg\lceil \log\left(\frac{C_d r\log(\Delta/\alpha)}{\kappa_0^2 n \rho}\right)/\log(2) \Bigg\rceil.
	\end{equation}
	For notational simplicity, in the rest of the proof, we assume that 
	\[
		\log\left(\frac{C_d r\log(\Delta/\alpha)}{\kappa_0^2 n \rho}\right)/\log(2)
	\]
	is a positive integer.  If this is violated, then the proof only needs to be modified by keeping the ceiling operator throughout.
	
\medskip	
\noindent \textbf{Step 3.1}.	  With $J$ defined in \eqref{eq-J-def}, we have that
	\begin{align*}
		\|\widehat{\Theta}_{\Delta, \Delta + 2^J}\|_{\mathrm{F}} = \kappa \sqrt{\frac{\Delta \frac{C_d r\log(\Delta/\alpha)}{\kappa_0^2 n \rho}}{\Delta + \frac{C_d r\log(\Delta/\alpha)}{\kappa_0^2 n \rho}}},
	\end{align*}
	which can be derived by plugging in $\Delta$ and $\Delta+2^J$ into \eqref{eq-widehat-theta-2}.  Due to \eqref{eq-snr}, we have that
	\begin{equation}\label{eq-b-f-lower-bound}
		\|\widehat{\Theta}_{\Delta, \Delta + 2^J}\|_{\mathrm{F}} > C\log^{1/2}((\Delta + 2^J)/\alpha) +  \sqrt{r}\left(C\sqrt{n\rho} + \varepsilon_{s, \Delta + 2^J}\right).
	\end{equation}
	This can be seen in the following three steps.
	
\medskip	
\noindent \textbf{Step 3.1.1}.  We first show that 
	\begin{equation}\label{eq-3.1.1}
		3^{-1}\|\widehat{\Theta}_{\Delta, \Delta + 2^J}\|_{\mathrm{F}} > C\log^{1/2}((\Delta + 2^J)/\alpha).
	\end{equation}
	Provided that $C_d < C_{\mathrm{SNR}}$, due to \eqref{eq-snr}, it holds that
	\begin{equation}\label{eq-31111}
		\|\widehat{\Theta}_{\Delta, \Delta + 2^J}\|_{\mathrm{F}} \geq \kappa \sqrt{\Delta \frac{C_d r\log(\Delta/\alpha)}{2\kappa_0^2 n \rho}} = \sqrt{\Delta n\rho \frac{C_d r\log(\Delta/\alpha)}{2}}. 
	\end{equation}
	In addition, provided that $\Delta/\alpha \geq 2$, it holds that
	\begin{equation}\label{eq-31111111}
		\log^{1/2}((\Delta + 2^J)/\alpha) \leq \log^{1/2}(2\Delta/\alpha) \leq \sqrt{2\log(\Delta/\alpha)}.
	\end{equation}
	Therefore, provided that $C_d > 36C^2/\log(2)$ and $n \geq 2$, \eqref{eq-3.1.1} holds, where $n\rho \geq \log(n)$ assumed in \Cref{assump-1-net} is used.
		
\medskip	
\noindent \textbf{Step 3.1.2}.  Provided that $C_d > 18C^2/\log(2)$, we have that $3^{-1}\|\widehat{\Theta}_{\Delta, \Delta + 2^J}\|_{\mathrm{F}} > C\sqrt{rn\rho}$, by using~\eqref{eq-31111}.

\medskip	
\noindent \textbf{Step 3.1.3}.  Lastly, we are to show 
	\begin{equation}\label{eq-3.1.3}
		3^{-1}\|\widehat{\Theta}_{\Delta, \Delta + 2^J}\|_{\mathrm{F}} > C\sqrt{r}\varepsilon_{s, \Delta + 2^J} = C\sqrt{2r\log\left\{\frac{(\Delta + 2^J)(\Delta + 2^J+1)\log(\Delta + 2^J)}{\alpha\log(2)}\right\}}.
	\end{equation}
	Due to \eqref{eq-31111111}, the last term in \eqref{eq-3.1.3} is upper bounded by 
	\begin{align*}
		& C\sqrt{2r\log\left\{\frac{2\Delta(2\Delta + 1)\log(2\Delta)}{\alpha\log(2)}\right\}} \leq C\sqrt{2r\log\left\{\frac{(2\Delta + 1)^3}{\alpha\log(2)}\right\}} \\
		\leq & C\sqrt{8r\log\left\{\frac{2\Delta}{\alpha}\right\}}  \leq C\sqrt{16r\log\left\{\frac{\Delta}{\alpha}\right\}}.
	\end{align*}
	Therefore provided that $C_d > 288C^2/\log(2)$, \eqref{eq-3.1.3} holds.

\medskip	
\noindent \textbf{Step 3.2}.  In addition, we have that 
	\begin{align}
		& \left|\left(\widehat{A}_{\Delta, \Delta + 2^J}, \frac{\widetilde{B}_{\Delta, \Delta + 2^J}}{\left\|\widetilde{B}_{\Delta, \Delta + 2^J}\right\|_{\mathrm{F}}}\right)\right| \nonumber \\
		\geq & \left|\left(\widehat{\Theta}_{\Delta, \Delta + 2^J}, \frac{\widetilde{B}_{\Delta, \Delta + 2^J}}{\left\|\widetilde{B}_{\Delta, \Delta + 2^J}\right\|_{\mathrm{F}}}\right)\right| - \left|\left(\widehat{A}_{\Delta, \Delta + 2^J} - \widehat{\Theta}_{\Delta, \Delta + 2^J}, \frac{\widetilde{B}_{\Delta, \Delta + 2^J}}{\left\|\widetilde{B}_{\Delta, \Delta + 2^J}\right\|_{\mathrm{F}}}\right)\right| = (I) - (II). \label{eq-i-ii-proof}
	\end{align}

\medskip	
\noindent \textbf{Step 3.2.1}. 
As for $(I)$, we have that
	\begin{align}
		& \left|\left(\widehat{\Theta}_{\Delta, \Delta + 2^J}, \frac{\widetilde{B}_{\Delta, \Delta + 2^J}}{\left\|\widetilde{B}_{\Delta, \Delta + 2^J}\right\|_{\mathrm{F}}}\right)\right| = \|\widehat{\Theta}_{\Delta, \Delta + 2^J}\|_{\mathrm{F}} \left|\left(\frac{\widehat{\Theta}_{\Delta, \Delta + 2^J}}{\|\widehat{\Theta}_{\Delta, \Delta + 2^J}\|_{\mathrm{F}}}, \frac{\widetilde{B}_{\Delta, \Delta + 2^J}}{\left\|\widetilde{B}_{\Delta, \Delta + 2^J}\right\|_{\mathrm{F}}}\right)\right| \nonumber \\
		= & \frac{\|\widehat{\Theta}_{\Delta, \Delta + 2^J}\|_{\mathrm{F}}}{2} \left(2 - \left\|\frac{\widehat{\Theta}_{\Delta, \Delta + 2^J}}{\|\widehat{\Theta}_{\Delta, \Delta + 2^J}\|_{\mathrm{F}}} - \frac{\widetilde{B}_{\Delta, \Delta + 2^J}}{\left\|\widetilde{B}_{\Delta, \Delta + 2^J}\right\|_{\mathrm{F}}}\right\|_{\mathrm{F}}^2\right) \nonumber \\ 
		= & \frac{\|\widehat{\Theta}_{\Delta, \Delta + 2^J}\|_{\mathrm{F}}}{2} \left(2 - \left\|\frac{\widehat{\Theta}_{\Delta, \Delta + 2^J}\left\|\widetilde{B}_{\Delta, \Delta + 2^J}\right\|_{\mathrm{F}} - \widetilde{B}_{\Delta, \Delta + 2^J}\|\widehat{\Theta}_{\Delta, \Delta + 2^J}\|_{\mathrm{F}}}{\|\widehat{\Theta}_{\Delta, \Delta + 2^J}\|_{\mathrm{F}}\left\|\widetilde{B}_{\Delta, \Delta + 2^J}\right\|_{\mathrm{F}}  }\right\|_{\mathrm{F}}^2\right) \nonumber \\		
		\geq & \frac{\|\widehat{\Theta}_{\Delta, \Delta + 2^J}\|_{\mathrm{F}}}{2} \left(2 - \left\|  \frac{\left\|\widehat{\Theta}_{\Delta, \Delta + 2^J} - \widetilde{B}_{\Delta, \Delta + 2^J} \right\|_{\mathrm{F}}}{\|\widehat{\Theta}_{\Delta, \Delta + 2^J}\|_{\mathrm{F}}}  +   \frac{\left|\left\|\widehat{\Theta}_{\Delta, \Delta + 2^J}\right\|_{\mathrm{F}} - \left\|\widetilde{B}_{\Delta, \Delta + 2^J} \right\|_{\mathrm{F}}\right|}{\|\widehat{\Theta}_{\Delta, \Delta + 2^J}\|_{\mathrm{F}}} \right\|_{\mathrm{F}}^2\right) \nonumber \\	
		\geq & \frac{\|\widehat{\Theta}_{\Delta, \Delta + 2^J}\|_{\mathrm{F}}}{2}\left\{2 - 4 \left(\frac{\|\widehat{\Theta}_{\Delta, \Delta + 2^J} - \widetilde{B}_{\Delta, \Delta + 2^J}\|_{\mathrm{F}}}{\|\widehat{\Theta}_{\Delta, \Delta + 2^J}\|_{\mathrm{F}}}\right)^2\right\} \nonumber \\
		\geq & \frac{\|\widehat{\Theta}_{\Delta, \Delta + 2^J}\|_{\mathrm{F}}}{2}\left\{2 - 4\left(\frac{C\sqrt{rn\rho} + C\sqrt{r}\varepsilon_{\Delta, \Delta + 2^J}}{\|\widehat{\Theta}_{\Delta, \Delta + 2^J}\|_{\mathrm{F}}}\right)^2\right\} \geq \frac{\|\widehat{\Theta}_{\Delta, \Delta + 2^J}\|_{\mathrm{F}}}{2}, \label{eq-i-bound-proof}
	\end{align}	
	where the third inequality is due to the event $\mathcal{F}_1$ and the last inequality follows from \eqref{eq-b-f-lower-bound} with a sufficiently large $C_{\mathrm{SNR}}$.
	
\medskip	
\noindent \textbf{Step 3.2.2}. 
As for (II), due to the independence between $\{A(t)\}$ and $\{B(t)\}$, it follows from \Cref{lem-s2} and~\eqref{eq-b-f-lower-bound} that 
	\begin{align}
		& \mathbb{P}_A\left\{\left|\left(\widehat{A}_{\Delta, \Delta + 2^J} - \widehat{\Theta}_{\Delta, \Delta + 2^J}, \frac{\widetilde{B}_{\Delta, \Delta + 2^J}}{\left\|\widetilde{B}_{\Delta, \Delta + 2^J}\right\|_{\mathrm{F}}}\right)\right| \geq b_{\Delta + 2^J} \right\} \nonumber \\
		\leq &  \exp\left\{-\frac{3/2 b_t^2}{3 \rho + b_{\Delta + 2^J}\rho C^{-1}\log^{-1/2}((\Delta + 2^J)/\alpha)}\right\} < \frac{\alpha}{2}. \label{eq-ii-term-bound-proof}
	\end{align}
	To be specific, the CUSUM weights are regarded as the $\{w_l\}$ sequence in \Cref{lem-s2} and all the entries in $\widetilde{B}_{\Delta, \Delta + 2^J}\left\|\widetilde{B}_{\Delta, \Delta + 2^J}\right\|_{\mathrm{F}}^{-1}$ are regarded as the $\{v_i\}$ sequence in \Cref{lem-s2}.  Therefore, $\|v\|_2 = 1$, $\max_{l = 1}^t |w_l| \leq 2^{-J/2}$ and
	\[
		\max_{i=1}^p|v_i| \leq \frac{\rho\sqrt{\frac{2^J\Delta}{\Delta + 2^J}}}{C\log^{1/2}((\Delta + 2^J)/\alpha)} \leq \frac{\rho 2^{J/2}}{C\log^{1/2}((\Delta + 2^J)/\alpha)},
	\]
	where the last inequality follows from \eqref{eq-b-f-lower-bound} and the definition of $\mathcal{F}_1$.
	
Due to 	\eqref{eq-b-f-lower-bound}, with a sufficiently large $C_{\mathrm{SNR}}$, it holds that 
	\[
		\frac{\|\widehat{\Theta}_{\Delta, \Delta + 2^J}\|_{\mathrm{F}}}{2} > 2b_{\Delta + 2^J}.
	\]
	Then, combining \eqref{eq-i-ii-proof}, \eqref{eq-i-bound-proof} and \eqref{eq-ii-term-bound-proof}, it holds that
	\begin{align}
		\mathbb{P}_A\left\{\left|\left(\widehat{A}_{\Delta, \Delta + 2^J}, \frac{\widetilde{B}_{\Delta, \Delta + 2^J}}{\left\|\widetilde{B}_{\Delta, \Delta + 2^J}\right\|_{\mathrm{F}}}\right)\right| \geq b_{\Delta + 2^J} \right\} > 1 - \frac{\alpha}{2}. \label{eq-ii-term-bound-proof-2}
	\end{align}
\medskip	
\noindent \textbf{Step 3.3}. Combining \eqref{eq-f1-ddddd} and \eqref{eq-ii-term-bound-proof-2}, we have that 
	\[
		\mathbb{P}\left\{\left|\left(\widehat{A}_{\Delta, \Delta + 2^J}, \frac{\widetilde{B}_{\Delta, \Delta + 2^J}}{\left\|\widetilde{B}_{\Delta, \Delta + 2^J}\right\|_{\mathrm{F}}}\right)\right| \mathbbm{1} \left\{\|\widetilde{B}_{\Delta, \Delta + 2^J}\|_{\mathrm{F}} > C\log^{1/2}((\Delta + 2^J)/\alpha)\right\} > b_{\Delta + 2^J}\right\} > 1 - \alpha,
	\]
	which completes the proof.	

\end{proof}

\begin{proof}[Proof of \Cref{thm-selection}]
We let
	\[
		\widetilde{B}_{s, t} = \mathrm{USVT}(\widehat{B}_{s, t}, \tau_{1, s, t}, \tau_{2, s, t}), \quad t > N.
	\]
	Let 
	\[
		\mathcal{S}(t) = \{t - 2^{j-1}, \, j = 1, \ldots, \lfloor\log(t)/\log(2) \rfloor\}.
	\]

\medskip	
\noindent \textbf{Step 1}.   Due to \Cref{lem-usvt-error-control-2}, it holds that if \Cref{assump-2-0-net} holds, then with probability at least $1 - (\gamma+1)^{-1}$, the event $\mathcal{F}_2$ holds, i.e.~$\widetilde{B}_{s, t} = 0$, for all $t \in \{2, \ldots, \gamma + 1\}$ and $s \in \mathcal{S}(t)$  Then we have
	\begin{align*}
		& \mathbb{E}_{\infty}(\widehat{t}) = \sum_{t = 1}^{\infty}\mathbb{P}(\widehat{t} \geq t) \geq \sum_{t = 1}^{\gamma + 1} \mathbb{P}(\widehat{t} \geq t) \geq (\gamma+1) \mathbb{P}(\widehat{t} \geq \gamma + 1) \geq (\gamma+1) \left(1 - \frac{1}{\gamma+1}\right) = \gamma.
	\end{align*}
	The claim (i) is proved.
	
\medskip	
\noindent \textbf{Step 2}.	As for the claim (ii), recall the event $\mathcal{F}_2$ and associated quantities defined in \Cref{lem-usvt-error-control-2}.  We have that $\mathbb{P}(\mathcal{F}_2) > 1 - 1/\{2(\gamma+1)\}$.  Conditional on the event $\mathcal{F}_2$ instead of $\mathcal{F}_1$, the rest of the proof is identical to that of \Cref{thm-net-delay-varying-N}. 
\end{proof}

\begin{proof}[Proof of \Cref{thm-net-delay-np}]
The proof is almost identical to the proof of \Cref{thm-net-delay-varying-N}, except that the large probability events where USVT estimators are well controlled in the proof of \Cref{thm-net-delay-varying-N} are replaced by Theorem~2.1 in \cite{gao2015rate}.  In fact, Theorem~2.1 in \cite{gao2015rate} is stated and proved by assuming $\rho = 1$.  In order to get an upper bound being a function of $\rho$, we only need to change Lemmas~4.1-4.3 in \cite{gao2015rate} correspondingly.
\end{proof}

\begin{proof}[Proof of \Cref{thm-lb}]

This proof consists of two different cases: a) $r \lesssim \sqrt{n}$ and b) $r \gtrsim \sqrt{n}$.

\medskip
\noindent \textbf{Case 1: $r \lesssim \sqrt{n}$.}

\noindent \textbf{Step 1 - Setup.} We assume the networks are generated as follows.  Prior to the change point, if there exists any, the adjacency matrices are generated independently from the distribution $P_0$, which has the graphon matrix
	\[
		\Theta_1 = (\rho/2)_{i, j = 1}^n.
	\]
	If there exists a change point, then the adjacency matrices after the change point are generated independently from the distribution 
	\[
		P_1 = \frac{1}{2^n} \sum_{u \in \{\pm 1\}^n} P_{1, u},
	\]
	where the graphon of the distribution $P_{1, u}$ is $\rho/2 \mathbbm{1}\mathbbm{1}^{\top} + \kappa_0 \rho u u^{\top}$, $u \in \{\pm 1\}^n$.

For any $M \in \mathbb{N}$, let $P^M$ be the restriction of a distribution $P$ on $\mathcal{F}_M$, i.e.~the $\sigma$-filed generated by the observations $\{A(t)\}_{i = 1}^M$.  For notational simplicity, in this proof, the adjacency matrices $A(t)$'s will be denoted as $A^t$'s.  For any $\nu \geq 1$ and $M \geq \nu$, we have that for any $M \geq \Delta$, let
	\[
		Z_{\nu, M} = \log\left(\frac{P_{\kappa_0, \nu}^M}{P_{\kappa_0, \infty}^M}\right),
	\]	
	where $P_{\kappa, \infty}$ indicates the distribution under which there is no change point.

\medskip
\noindent \textbf{Step 2 - When $Z_{\nu, T}$ is upper bounded.}  For any $\nu \geq 1$, define the event
	\[
		\mathcal{E}_{\nu} = \left\{\nu < T < \nu + \frac{\log(1/\alpha)}{8\kappa^2_0 n \rho}, \quad Z_{\nu, T} < \frac{3}{4}\log(1/\alpha)\right\}.
	\]	
	Then we have
	\begin{align}
		\mathbb{P}_{\kappa, \nu}(\mathcal{E}_{\nu}) = \frac{P_{\kappa_0, \nu}}{P_{\kappa, \infty}}(\mathcal{E}_{\nu}) P_{\kappa, \infty}(\mathcal{E}_{\nu}) \leq \alpha^{-3/4}\alpha = \alpha^{1/4},\label{eq-lb-pf-v1-111}
	\end{align}
	where the inequality follows from the definition of $\mathcal{D}$ and $\mathcal{E}_{\nu}$.

\medskip
\noindent \textbf{Step 3 - When $Z_{\nu, T}$ is lower bounded.} 
For any $\nu \geq 1$ and $T \in \mathcal{D}$, since $\{T \geq \nu\} \in \mathcal{F}_{\nu-1}$, we have that
	\begin{align}
		& \mathbb{P}_{\kappa, \nu}\left\{\nu < T < \nu + \frac{\log(1/\alpha)}{8\kappa^2_0 n \rho}, \quad Z_{\nu, T} \geq \frac{3}{4}\log(1/\alpha) \Bigg | T \geq \nu\right\}	\nonumber \\
		\leq & \esssup \mathbb{P}_{\kappa, \nu} \left\{\max_{1 \leq l \leq \frac{\log(1/\alpha)}{8\kappa^2_0 n \rho}} Z_{\nu, \nu + l} \geq \frac{3}{4}\log(1/\alpha) \Bigg | A^1, \ldots, A^{\nu}\right\} \nonumber \\
		\leq & \frac{\log(1/\alpha)}{8\kappa^2_0 n \rho} \max_{1 \leq l \leq \frac{\log(1/\alpha)}{8\kappa^2_0 n \rho}} \esssup \mathbb{P}_{\kappa, \nu} \left\{Z_{\nu, \nu + l} \geq \frac{3}{4}\log(1/\alpha) \Bigg | A^1, \ldots, A^{\nu}\right\} \nonumber \\
		\leq &  \frac{\frac{\log(1/\alpha)}{8\kappa^2_0 n \rho} } {\exp\left\{\frac{3}{4} \log(1/\alpha)\right\}}\max_{1 \leq l \leq \frac{\log(1/\alpha)}{8\kappa^2_0 n \rho}} \esssup \mathbb{E}_{\kappa, \nu} \left\{ \exp(Z_{\nu, \nu + l}) \Bigg | A^1, \ldots, A^{\nu}\right\}. \label{eq-lb-pf-v1-0}
	\end{align}

\medskip
\noindent \textbf{Step 3.1.}  Note that for any $l \in \{1, \ldots, \log(1/\alpha)(8\kappa^2_0 n \rho)^{-1}\}$, it holds that
	\begin{align}
		& \mathbb{E}_{\kappa, \nu}	\left\{\exp(Z_{\nu, \nu + l}) \Bigg | A^1, \ldots, A^{\nu}\right\} = \mathbb{E}_{\kappa, \nu}	\left\{ \left(\frac{P_{\kappa_0, \nu}^{\nu + l}}{P_{\kappa_0, \infty}^{\nu + l}}\right) \Bigg | A^1, \ldots, A^{\nu}\right\}. \label{eq-lb-pf-v1-1}
	\end{align}
	
In addition, letting $\zeta = \rho/2$ and $v_i$ be the $i$th entry of any vector $v$, we have that
	\begin{align}
		& \mathbb{E}_{P_1} \left(\frac{P_1}{P_0}(A)\right) = \mathbb{E}_u \mathbb{E}_{A|u} \left(\frac{1}{2^n} \sum_{v \in \{\pm 1\}} \frac{P_{1, v}}{P_0}(A)\right) \label{eq-question}\\
		= &	\mathbb{E}_u \mathbb{E}_{A|u} \left\{\frac{1}{2^n} \sum_{v \in \{\pm 1\}} \prod_{1\leq i < j \leq n} \left(\frac{\zeta + \kappa_0\rho v_i v_j}{\zeta}\right)^{A_{ij}}\left(\frac{1-\zeta-\kappa_0\rho v_i v_j}{1 - \zeta}\right)^{1-A_{ij}}\right\} \nonumber \\
		= &	\mathbb{E}_u \left\{\frac{1}{2^n} \sum_{v \in \{\pm 1\}} \prod_{1\leq i < j \leq n} \left\{1 + \frac{\kappa_0^2\rho^2 u_i u_jv_i v_j}{\zeta(1-\zeta)}\right\} \right\} \nonumber \\
		\leq & \mathbb{E}_u \left\{\frac{1}{2^n} \sum_{v \in \{\pm 1\}} \prod_{i, j = 1}^n \left\{1 + \frac{\kappa_0^2\rho^2 u_i u_jv_i v_j}{\zeta(1-\zeta)}\right\} \right\}, \label{eq-aaaaaaaaaa}
	\end{align}	
	where in \eqref{eq-aaaaaaaaaa}, $u$ is a random vector with entries being independent Rademacher random variables.  We further have that 
	\begin{align}
		\eqref{eq-aaaaaaaaaa} \leq  \mathbb{E}_u \mathbb{E}_v\prod_{i, j = 1}^n \exp \left\{\frac{\kappa_0^2\rho^2 u_i u_jv_i v_j}{\zeta(1-\zeta)}\right\} = \mathbb{E}_{u, v}\exp\left\{\frac{4\kappa_0^2 \rho}{2-\rho} (u^{\top}v)^2\right\},	\label{eq-bbbbbbbb}
	\end{align}
	where $u$ and $v$ are independent random vectors with entries being independent Rademacher random variables.  Finally, we have that 
	\begin{align}
		\eqref{eq-bbbbbbbb}	= \mathbb{E}_u\exp\left\{\frac{4\kappa_0^2 \rho}{2-\rho} (u^{\top}1)^2\right\}, \label{eq-lb-pf-v1-2}
	\end{align}
	where $1$ is an all-one $n$-dimensional vector.  \Cref{eq-lb-pf-v1-2} is due to the fact that for each $i \in \{1, \ldots, n\}$, $u_iv_i$ has the same distribution as $u_i$.

\medskip
\noindent \textbf{Step 3.2.}  Let 
	\[
		\varepsilon_n = \left(\frac{\sum_{i = 1}^n u_i}{n}\right)^2.
	\]
	Then we have that for any $x > 0$, due to Hoeffding's inequality that
	\[
		\mathbb{P}\left\{\varepsilon_n > x\right\} \leq 2\exp(-2nx).
	\]
	Therefore
	\begin{align}
		& \mathbb{E}_u\exp\left\{\frac{4\kappa_0^2 n^2 \rho}{2-\rho} (u^{\top}1)^2\right\} = \int_0^{\infty} \mathbb{P}\left\{\exp\left(\frac{4\kappa_0^2n^2 \rho}{2-\rho} \varepsilon_n\right) > x\right\}	\, dx \nonumber\\
		\leq & 1 + \int_1^{\infty} \mathbb{P}\left\{\varepsilon_n > \log(x) \frac{2-\rho}{4\kappa_0^2n^2 \rho}\right\} \, dx \leq 1 + 2\int_1^{\infty} \exp \left\{\log(x) \frac{\rho-2}{2\kappa_0^2n\rho}\right\} \, dx \nonumber \\
		\leq & 1 + \frac{2}{1 + \frac{\rho-2}{2\kappa_0^2n\rho}} x^{1 + \frac{\rho-2}{2\kappa_0^2n\rho}} \Bigg|_1^{\infty} = 1 - \frac{2}{1 + \frac{\rho-2}  {2\kappa_0^2n\rho}} \leq \exp\left(4\kappa_0^2n \rho\right), \label{eq-lb-pf-v1-3}
	\end{align}
	provided that 
	\begin{equation}\label{eq-lb-pf-cond-1}
		\rho + 2\kappa^2_0n\rho < 1.
	\end{equation}

\medskip
\noindent \textbf{Step 3.3.}  Combining \eqref{eq-lb-pf-v1-0}, \eqref{eq-lb-pf-v1-1}, \eqref{eq-lb-pf-v1-2} and \eqref{eq-lb-pf-v1-3}, we have that	
	\begin{align}
		& \mathbb{P}_{\kappa, \nu}\left\{\nu < T < \nu + \frac{\log(1/\alpha)}{8\kappa^2_0 n \rho}, \quad Z_{\nu, T} \geq \frac{3}{4}\log(1/\alpha) \Bigg | T \geq \nu\right\} \nonumber \\
		\leq & \frac{\log(1/\alpha)}{8\kappa^2_0 n \rho}\frac{\exp \left\{\frac{\log(1/\alpha)}{8\kappa^2_0 n \rho} 4\kappa_0^2n \rho\right\}}{\exp\left\{\frac{3}{4} \log(1/\alpha)\right\}} \leq \alpha^{1/8}, \label{eq-lb-pf-v1-222}
	\end{align}
	provided that 
	\begin{equation}\label{eq-lb-pf-cond-2}
		\alpha^{1/8} \log(1/\alpha) < 8 \kappa_0^2 n \rho.
	\end{equation}

\medskip	
\noindent \textbf{Step 4.}  Combining \eqref{eq-lb-pf-v1-111} and \eqref{eq-lb-pf-v1-222}, we then have
	\[
		\sup_{\nu \geq 1} \mathbb{P}_{\kappa, \nu} 	\left\{\nu < T < \nu + \frac{\log(1/\alpha)}{8\kappa^2_0 n \rho}\right\} \leq 2\alpha^{1/8}.
	\]	

Then it holds that
	\begin{align*}
		& \mathbb{E}_{\kappa, \Delta} \{(T - \Delta)_+\} \geq \frac{\log(1/\alpha)}{8\kappa^2_0 n \rho}\mathbb{P}_{\kappa, \nu} 	\left\{T - \Delta \geq \frac{\log(1/\alpha)}{8\kappa^2_0 n \rho}\right\} \\
		= & \frac{\log(1/\alpha)}{8\kappa^2_0 n \rho}\mathbb{P}_{\kappa, \nu} \left[\mathbb{P}_{\kappa, \nu} 	\left\{T > \Delta\right\} - \mathbb{P}_{\kappa, \nu} 	\left\{\Delta < T < \Delta + \frac{\log(1/\alpha)}{8\kappa^2_0 n \rho}\right\} \right] \\
		\geq & \frac{\log(1/\alpha)}{8\kappa^2_0 n \rho} (1 - \alpha - 2\alpha^{1/8}) \geq \frac{\log(1/\alpha)}{16\kappa^2_0 n \rho},
	\end{align*}
	provided that
	\begin{equation}\label{eq-lb-pf-cond-3}
		\alpha + 2\alpha^{1/8} < 1/2.
	\end{equation}
	
\medskip	
\noindent \textbf{Step 5.}  Finally, we are to show the set of parameters satisfying \eqref{eq-lb-pf-cond-1}, \eqref{eq-lb-pf-cond-2} and \eqref{eq-lb-pf-cond-3} is not an empty set.  For instance, we take $\rho = 1/4$ and $\kappa_0^2n = 1/2$, then \eqref{eq-lb-pf-cond-1} holds.  With this choice, \eqref{eq-lb-pf-cond-2} holds if 
	\begin{equation}\label{eq-lb-pf-cond-4}
	\alpha^{1/8} < 1/\log(1/\alpha)
	\end{equation}
	holds.  Since $\alpha < \alpha^{1/8}$, \eqref{eq-lb-pf-cond-3} holds if 
	\begin{equation}\label{eq-lb-pf-cond-5}
	\alpha^{1/8} < 1/6 
	\end{equation}	
	holds.  The choice of $\alpha = 30^{-8}$ satisfies both \eqref{eq-lb-pf-cond-4} and \eqref{eq-lb-pf-cond-5}.
	
\medskip	
\noindent \textbf{Step 6.}  We have now shown that when $r \lesssim \sqrt{n}$, it holds that 
	\begin{equation}\label{eq-lb-final-1}
		\inf_{\widehat{t} \in \mathcal{D}}\sup_{P_{\kappa, \Delta}} \mathbb{E}_{P}\left\{(\widehat{t} - \Delta)_+\right\} \geq \frac{c\log(1/\alpha)}{\kappa^2_0 n \rho},
	\end{equation}
	with an absolute constant $c > 0$.
	
\medskip
\noindent \textbf{Case 2: $r \gtrsim \sqrt{n}$.}

\noindent \textbf{Step 1 - Setup.} We assume that the networks are generated as follows.  Prior to the change point, if there exists any, the adjacency matrices are generated independently from the distribution $P_0$, which has the graphon matrix
	\[
		\Theta_0 = (\rho/2)_{i, j = 1}^n.
	\]
	If there exists a change point, then the adjacency matrices after the change point are generated independently from the distribution 
	\[
		P_1 = \frac{1}{2^{(r^2/2)}} \sum_{Z \in \mathcal{Z}} P_{1, Z},
	\]
	where the graphon of the distribution $P_{1, Z}$ is $\rho/2 \mathbbm{1}\mathbbm{1}^{\top} + \kappa_0 \rho n/r Z$ and the collection $\mathcal{Z}$ is the set for all symmetric matrices satisfying $Z_{ij} = 0$, if $\max\{i, j\} > r$, and all the upper triangular matrix of $Z_{(1:r), (1:r)}$ are independent Radamacher random variables.
	
In order to show that $P_1$ is a probability distribution, it suffices to justify that for any $Z \in \mathcal{Z}$, $P_{1, Z}$ is a suitable probability distribution.  
	\begin{itemize}
	\item Firstly, we have that 
	\begin{align*}
		\left\|\kappa_0 \rho n/r Z\right\|_{\mathrm{F}} = \kappa_0 \rho n.
	\end{align*}
	\item Secondly, we have that	 the entries of the matrix $Z$ are all in the set $\{0, \pm 1\}$.  This means that provided
	\begin{equation}\label{eq-first-condition-lb}
		\kappa_0 n/ r < 1/2,
	\end{equation}
	all the entries of $P_{1, Z, E}$ are in the interval $[0, \rho]$. 
	\item Lastly, the rank of the matrix $Z$ is upper bounded by $r$.
	\end{itemize}

For any $M \in \mathbb{N}$, let $P^M$ be the restriction of a distribution $P$ on $\mathcal{F}_M$, i.e.~the $\sigma$-filed generated by the observations $\{A(t)\}_{i = 1}^M$.  For notational simplicity, in this proof, the adjacency matrices $A(t)$'s will be denoted as $A^t$'s.  For any $\nu \geq 1$ and $M \geq \nu$, we have that for any $M \geq \Delta$, let
	\[
		Z_{\nu, M} = \log\left(\frac{P_{\kappa_0, \nu}^M}{P_{\kappa_0, \infty}^M}\right),
	\]	
	where $P_{\kappa, \infty}$ indicates the distribution under which there is no change point.

\medskip
\noindent \textbf{Step 2 - When $Z_{\nu, T}$ is upper bounded.}  For any $\nu \geq 1$, define the event
	\[
		\mathcal{E}_{\nu} = \left\{\nu < T < \nu + \frac{r^2/n\log(1/\alpha)}{8\kappa^2_0 n \rho}, \quad Z_{\nu, T} < \frac{3}{4}\log(1/\alpha)\right\}.
	\]	
	Then we have
	\begin{align}
		\mathbb{P}_{\kappa, \nu}(\mathcal{E}_{\nu}) = \frac{P_{\kappa_0, \nu}}{P_{\kappa, \infty}}(\mathcal{E}_{\nu}) P_{\kappa, \infty}(\mathcal{E}_{\nu}) \leq \alpha^{-3/4}\alpha = \alpha^{1/4},\label{eq-lb-pf-v1-111}
	\end{align}
	where the inequality follows from the definition of $\mathcal{D}$ and $\mathcal{E}_{\nu}$.

\medskip
\noindent \textbf{Step 3 - When $Z_{\nu, T}$ is lower bounded.} 
For any $\nu \geq 1$ and $T \in \mathcal{D}$, since $\{T \geq \nu\} \in \mathcal{F}_{\nu-1}$, we have that
	\begin{align}
		& \mathbb{P}_{\kappa, \nu}\left\{\nu < T < \nu + \frac{r^2/n\log(1/\alpha)}{8\kappa^2_0 n \rho}, \quad Z_{\nu, T} \geq \frac{3}{4}\log(1/\alpha) \Bigg | T \geq \nu\right\}	\nonumber \\
		\leq & \esssup \mathbb{P}_{\kappa, \nu} \left\{\max_{1 \leq l \leq \frac{r^2/n\log(1/\alpha)}{8\kappa^2_0 n \rho}} Z_{\nu, \nu + l} \geq \frac{3}{4}\log(1/\alpha) \Bigg | A^1, \ldots, A^{\nu}\right\} \nonumber \\
		\leq & \frac{r^2/n\log(1/\alpha)}{8\kappa^2_0 n \rho} \max_{1 \leq l \leq \frac{r^2/n\log(1/\alpha)}{8\kappa^2_0 n \rho}} \esssup \mathbb{P}_{\kappa, \nu} \left\{Z_{\nu, \nu + l} \geq \frac{3}{4}\log(1/\alpha) \Bigg | A^1, \ldots, A^{\nu}\right\} \nonumber \\
		\leq &  \frac{\frac{r^2/n\log(1/\alpha)}{8\kappa^2_0 n \rho} } {\exp\left\{\frac{3}{4} \log(1/\alpha)\right\}}\max_{1 \leq l \leq \frac{r^2/n\log(1/\alpha)}{8\kappa^2_0 n \rho}} \esssup \mathbb{E}_{\kappa, \nu} \left\{ \exp(Z_{\nu, \nu + l}) \Bigg | A^1, \ldots, A^{\nu}\right\}. \label{eq-lb-pf-v1-0-111}
	\end{align}

\medskip
\noindent \textbf{Step 2.1.}  Note that for any $l \in \{1, \ldots, r^2/n\log(1/\alpha)(8\kappa^2_0 n \rho)^{-1}\}$, it holds that
	\begin{align}
		& \mathbb{E}_{\kappa, \nu}	\left\{\exp(Z_{\nu, \nu + l}) \Bigg | A^1, \ldots, A^{\nu}\right\} = \mathbb{E}_{\kappa, \nu}	\left\{ \left(\frac{P_{\kappa_0, \nu}^{\nu + l}}{P_{\kappa_0, \infty}^{\nu + l}}\right) \Bigg | A^1, \ldots, A^{\nu}\right\}. \label{eq-lb-pf-v1-1-1111}
	\end{align}
	
In addition, letting $\zeta = \rho/2$, $U_{ij} = \kappa_0 \rho n/r Z_{ij}$ and $V_{ij} = \kappa_0 \rho n/r W_{ij}$, we have that
	\begin{align}
		& \mathbb{E}_{P_1} \left(\frac{P_1}{P_0}\right) = \mathbb{E}_{Z} \mathbb{E}_{A|Z} \left(\frac{1}{2^{(r^2/2)}} \sum_{W \in \mathcal{Z}} \frac{P_{1, W}}{P_0}\right) \nonumber \\
		= &	\mathbb{E}_{Z} \mathbb{E}_{A|Z}  \left\{\frac{1}{2^{(r^2/2)}}\sum_{W \in \mathcal{Z}}  \prod_{1\leq i < j \leq n} \left(\frac{\zeta + V_{ij}}{\zeta}\right)^{A_{ij}}\left(\frac{1-\zeta-V_{ij}}{1 - \zeta}\right)^{1-A_{ij}}\right\} \nonumber \\
		= & \mathbb{E}_{Z} \left\{\frac{1}{2^{(r^2/2)}} \sum_{W \in \mathcal{Z}} \prod_{1\leq i < j \leq n} \left\{1 + \frac{U_{ij}V_{ij}}{\zeta(1-\zeta)}\right\} \right\} \nonumber \\ 
		\leq & \mathbb{E}_{Z} \left\{\frac{1}{2^{(r^2/2)}} \sum_{W \in \mathcal{Z}} \prod_{i, j = 1}^n \left\{1 + \frac{U_{ij}V_{ij}}{\zeta(1-\zeta)}\right\} \right\}  \nonumber \\
		\leq & \mathbb{E}_{Z} \mathbb{E}_{W} \prod_{i, j = 1}^n \exp \left\{\frac{U_{ij}V_{ij}}{\zeta(1-\zeta)}\right\} = \mathbb{E}_{Z} \mathbb{E}_{W} \exp \left\{\frac{\kappa_0^2 \rho n^2/r^2}{2-\rho} \langle Z, W\rangle\right\} \label{eq-pf-lb-alter-1-1111}.
	\end{align}
	
\medskip
\noindent \textbf{Step 2.2} For any fixed $Z, W \in \mathcal{Z}$, it holds that
	\begin{align}
		\langle Z, W\rangle = 2 z^{\top}w, \label{eq-pf-lb-alter-2}
	\end{align}
	where $z$ and $w$ are vectorised upper triangular parts of $Z$ and $W$, respectively.  The vectors $z$ and $w$ are all $r^2/2$-dimensional vectors, consisting of only $\pm 1$.

\medskip
\noindent \textbf{Step 2.3.}  Due to \eqref{eq-pf-lb-alter-2}, it holds that
	\begin{align*}
		\mathbb{E}_{P_1} \left(\frac{P_1}{P_0}\right)  \leq \mathbb{E}_Z\mathbb{E}_W \exp \left\{\frac{2\kappa_0^2 \rho n^2/r^2}{2-\rho} (z^{\top}w)\right\} = \mathbb{E}_Z \exp \left\{\frac{2\kappa_0^2 \rho n^2/r^2}{2-\rho} (z^{\top}1)\right\}. 
	\end{align*}

Let 
	\[
		\varepsilon = \frac{\sum_{i = 1}^{r^2/2} z_{i}}{r^2/2}.
	\]
	Then we have that for any $x > 1$, due to Hoeffding's inequality that
	\[
		\mathbb{P}\left\{\varepsilon > x\right\} \leq \exp(- 2x^2 r^2) \leq \exp(- 2x r^2).
	\]
	We have that
	\begin{align*}
		& \mathbb{E}_Z \exp \left\{\frac{2\kappa_0^2 \rho n^2/r^2}{2-\rho} (z^{\top}1)\right\} = \int_0^{\infty} \mathbb{P}\left\{\exp\left(\frac{\kappa_0^2 \rho n^2}{2-\rho} \varepsilon_n\right) > x\right\}	\, dx \nonumber\\
		\leq & a + \int_a^{\infty} \mathbb{P}\left\{\varepsilon_n > \log(x) \frac{2-\rho}{\kappa_0^2 \rho n^2}\right\} \, dx \leq a + \int_a^{\infty} \exp \left\{-\log(x) \frac{2r^2(2-\rho)}{\kappa_0^2n^2\rho}\right\} \, dx \nonumber \\
		\leq & a - \frac{1}{1 - \frac{2r^2(2-\rho)}{\kappa_0^2n^2\rho}} = a + \frac{\kappa_0^2n^2\rho}{2r^2 - \kappa_0^2n^2\rho} \leq a + \frac{\kappa_0^2n^2\rho}{r^2}, \
	\end{align*}
	provided that 
	\begin{equation}\label{eq-lb-pf-cond-1-22222}
		\kappa_0^2 n^2 \rho < r^2,
	\end{equation}
	where
	\[
		a = \exp\left\{\frac{2-\rho}{\kappa_0^2\rho n^2}\right\}.
	\]

Then we have
	\begin{align}\label{eq-lb-pf-v1-3-1111}
		\mathbb{E}_{P_1} \left(\frac{P_1}{P_0}\right) \leq \exp\left\{\frac{2-\rho}{\kappa_0^2\rho n^2}\right\} \left\{1 + \exp\left\{\frac{\rho-2}{\kappa_0^2\rho n^2}\right\}\frac{\kappa_0^2n^2\rho}{r^2}\right\} \leq \exp\left\{\frac{2-\rho}{\kappa_0^2\rho n^2}\right\} \exp\left\{\frac{\kappa_0^2n^2\rho}{r^2}\right\}.
	\end{align}

\medskip	
\noindent \textbf{Step 3.}  Combining \eqref{eq-lb-pf-v1-0-111}, \eqref{eq-lb-pf-v1-1-1111} and \eqref{eq-lb-pf-v1-3-1111}, we then have
	\begin{align*}
		& \mathbb{P}_{\kappa, \nu}\left\{\nu < T < \nu + \frac{r^2/n\log(1/\alpha)}{8\kappa^2_0 n \rho}, \quad Z_{\nu, T} \geq \frac{3}{4}\log(1/\alpha) \Bigg | T \geq \nu\right\}	 \\
		\leq & \frac{\frac{r^2/n\log(1/\alpha)}{8\kappa^2_0 n \rho} } {\exp\left\{\frac{3}{4} \log(1/\alpha)\right\}} \exp\left\{\frac{r^2/n\log(1/\alpha)}{8\kappa^2_0 n \rho}\frac{2-\rho}{\kappa_0^2\rho n^2}\right\} \exp\left\{\frac{r^2/n\log(1/\alpha)}{8\kappa^2_0 n \rho}\frac{\kappa_0^2n^2\rho}{r^2}\right\} \\
		\leq & \alpha^{3/4} \alpha^{-1/8} \alpha^{-1/8} \alpha^{-1/8} \leq \alpha^{1/4},
	\end{align*}
	provided that 
	\begin{equation}\label{eq-third-condition-lb}
		\frac{r\log(1/\alpha)}{8\kappa^2_0 n^2 \rho} \leq \alpha^{-1/8} \quad \mbox{and} \quad \frac{r^2}{8 \kappa_0^4n^4\rho^2} \leq 1/8.
	\end{equation}

Then it holds that
	\begin{align*}
		& \mathbb{E}_{\kappa, \Delta} \{(T - \Delta)_+\} \geq \frac{r^2/n \log(1/\alpha)}{8\kappa^2_0 n \rho}\mathbb{P}_{\kappa, \nu} 	\left\{T - \Delta \geq \frac{r^2/n\log(1/\alpha)}{8\kappa^2_0 n \rho}\right\} \\
		= & \frac{r^2/n\log(1/\alpha)}{8\kappa^2_0 n \rho}\mathbb{P}_{\kappa, \nu} \left[\mathbb{P}_{\kappa, \nu} 	\left\{T > \Delta\right\} - \mathbb{P}_{\kappa, \nu} 	\left\{\Delta < T < \Delta + \frac{r^2/n\log(1/\alpha)}{8\kappa^2_0 n \rho}\right\} \right] \\
		\geq & \frac{r^2/n\log(1/\alpha)}{8\kappa^2_0 n \rho} (1 - \alpha - 2\alpha^{1/4}) \geq \frac{r^2/n\log(1/\alpha)}{16\kappa^2_0 n \rho},
	\end{align*}
	provided that
	\begin{equation}\label{eq-lb-pf-cond-3}
		\alpha + 2\alpha^{1/4} < 1/2.
	\end{equation}
	
\medskip	
\noindent \textbf{Step 4.}  Finally, we are to show the set of parameters satisfying \eqref{eq-first-condition-lb}, \eqref{eq-lb-pf-cond-1-22222}, \eqref{eq-third-condition-lb} and \eqref{eq-lb-pf-cond-3} is not an empty set.  For instance, we take $\rho = 1/5$, $r = 30$ and $\kappa_0n = 14$, then \eqref{eq-first-condition-lb}, \eqref{eq-lb-pf-cond-1-22222} and the second condition in \eqref{eq-third-condition-lb} hold.  With this choice, the first half of \eqref{eq-third-condition-lb} and \eqref{eq-lb-pf-cond-3} hold with the choice of $\alpha = 1/2000$.  This shows that the choice is not empty.  In addition,  provided that $n \leq 900$, we have that $r \geq \sqrt{n}$.

\medskip	
\noindent \textbf{Step 5.}  We have now shown that when $r \gtrsim \sqrt{n}$, it holds that 
	\begin{equation}\label{eq-lb-final-2}
		\inf_{\widehat{t} \in \mathcal{D}}\sup_{P_{\kappa, \Delta}} \mathbb{E}_{P}\left\{(\widehat{t} - \Delta)_+\right\} \geq \frac{c\log(1/\alpha) r^2/n}{\kappa^2_0 n \rho},
	\end{equation}
	with an absolute constant $c > 0$.
	
\medskip
Finally, combining \eqref{eq-lb-final-1} and \eqref{eq-lb-final-2}, we conclude the proof.	
\end{proof}

\bibliographystyle{ims}
\bibliography{ref}

\end{document}